\documentclass[12pt, reqno]{amsart}
\usepackage{amssymb,latexsym,amsmath,amscd,amsfonts}
\usepackage{latexsym}
\usepackage[mathscr]{eucal}
\usepackage{bm}

\voffset = -28pt \hoffset = -54pt \textwidth = 6.3in \textheight = 9.5in \numberwithin{equation}{section}

\newcommand{\beg}{\begin{equation}}
\newcommand{\eeg}{\end{equation}}
\newcommand{\ben}{\begin{eqnarray*}}
	\newcommand{\een}{\end{eqnarray*}}

\newtheorem{thm}{Theorem}[section]
\newtheorem{cor}[thm]{Corollary}
\newtheorem{lem}[thm]{Lemma}

\numberwithin{equation}{section} 
\theoremstyle{definition}
\newtheorem{defn}[thm]{Definition}

\newcommand{\T}{\mathbb{T}}
\newcommand{\ft}{\mathcal F_O}

\newcommand{\M}{\mathcal{M}}

\begin{document}
	\title[Admissible fundamental operators]
{Admissible fundamental operators associated with two domains related to $\mu$-synthesis}

\author[Bappa Bisai]{Bappa Bisai}
\address{Mathematics group, Harish-Chnadra Research Institute, HBNI, Chhatnag Road, Jhunsi, Allahabad, 211019, India.} \email{bappabisai@hri.res.in, bappa.bisai1234@gmail.com}

\keywords{Symmetrized polydisc, Tetrablock, $\Gamma_{n}$-contraction, $\mathbb E$-contraction, Fundamental operator tuple, Fundamental operator pair, Conditional dilation.}

\subjclass[2010]{47A13, 47A20, 47A25, 47A45}


\begin{abstract}
	A commuting tuple of $n$-operators $(S_1, \dots, S_{n-1},P)$ on a Hilbert space $\mathcal{H}$ is called a $\Gamma_n$-contraction if the closed symmetrized polydisc 
	\[
	\Gamma_n=\left\{\left(\sum\limits_{1\leq i\leq n}z_i, \sum\limits_{1\leq i<j\leq n}z_iz_j, \dots, \prod\limits_{i=1}^nz_i\right): |z_i|\leq 1,\, i=1,\dots, n \right\}
	\]
	is a spectral set. Also a commuting triple of operators $(A,B,P)$ for which the closed tetrablock $\overline{\mathbb E}$ is a spectral set is called an $\mathbb E$-contraction, where 
	\[
	\mathbb E=\left\{ (\beta_1+\bar{\beta}_2x_3,\beta_2+\bar{\beta}_1x_3,x_3)\in \mathbb C^3: |x_3|< 1 \text{ and }|\beta_1|+|\beta_2|< 1 \right\}.
	\]
	To every $\Gamma_{n}$-contraction, there is a unique operator tuple $(A_1, \dots, A_{n-1})$, defined on $\overline{\text{Ran}}D_P$, such that 
	\[
	S_i-S_{n-i}^*P=D_{P}A_iD_P, \quad D_{P}=(I-P^*P)^{1/2}, \quad i=1, \dots, n-1.
	\]
	This is called the \textit{fundamental operator tuple} (or in short the $\mathcal{F}_O$-tuple) associated with the $\Gamma_{n}$-contraction. Similarly, for every $\mathbb{E}$-contraction there is a unique $\mathcal{F}_O$-pair, defined on $\overline{\text{Ran}}D_P$, such that 
	\[
	A-B^*P=D_PF_1D_P, \quad B-A^*P=D_PF_2D_P.
	\]
	In this article, we discuss necessary condition of conditional dilation for both completely non-unitary (c.n.u) $\Gamma_{n}$-contractions and c.n.u $\mathbb E$-contractions.
	Consider two tuples, $(A_1, \dots, A_{n-1})$ and $(B_1, \dots, B_{n-1})$, of operators defined on two Hilbert spaces. One of the principal goals is to identify a necessary and a sufficient condition guaranteeing the existence of a c.n.u $\Gamma_n$-contraction $(S_1, \dots, S_{n-1},P)$ such that $(A_1, \dots, A_{n-1})$ becomes the $\mathcal{F}_O$-tuple of $(S_1, \dots, S_{n-1},P)$ and $(B_1, \dots,B_{n-1})$ becomes the $\mathcal{F}_O$-tuple of $(S_1^*,\dots, S_{n-1}^*,P^*)$. Also for given two pairs of operators $(F_1,F_2)$ and $(G_1,G_2)$ defined on two Hilbert spaces, we examine when there is a c.n.u $\mathbb E$-contraction $(A,B,P)$ such that $(F_1,F_2)$ becomes the $\mathcal{F}_O$-pair of $(A,B,P)$ and $(G_1,G_2)$ becomes the $\mathcal{F}_O$-pair of $(A^*,B^*,P^*)$.
\end{abstract}

\maketitle


\section{Introduction and preliminaries}

Throughout this article all operators are bounded linear transformations defined on separable complex Hilbert spaces.\\

In \cite{Agler}, Agler and Young defined the symmetrized bidisc $\mathbb G_2$ to study the $2 \times 2$ spectral Nevanlinna-Pick interpolation problem. Later Costara in \cite{costara} analyzed the symmetrized polydisc $\mathbb G_n$ to study the generalized $n \times n$ spectral Nevanlinna-Pick interpolation problem, where 
\[
\mathbb G_n =\left\{ \left(\sum_{1\leq i\leq n} z_i,\sum_{1\leq
	i<j\leq n}z_iz_j,\dots, \prod_{i=1}^n z_i \right): \,|z_i|< 1,
i=1,\dots,n \right \}.
\]
In 2007, Abouhajar, White and Young introduced the tetrablock $\Bbb E$ (see \cite{A:W:Y}), where
\[
\mathbb E=\left\{ (\beta_1+\overline{\beta}_2x_3,\beta_2+\overline{\beta}_1x_3, x_3): |x_3|< 1 \text{ and }|\beta_1|+|\beta_2|< 1 \right\}.
\]
The motivation for studying this domain comes from a class of interpolation problems in $H^{\infty}$ control theory. \\

The domains $\mathbb G_n$ and $\mathbb E$ are originated in the $\mu$-synthesis problem. Given a linear subspace $E\subseteq \mathcal{M}_n(\mathbb C)$, the space of all $n \times n$ complex matrices, the structured singular value of $A \in \mathcal{M}_n(\mathbb{C})$ is the number
\[
\mu_E(A)=\left(\text{inf}\{\|X\|:X\in E \text{ and }(I-AX) \text{ is singular}\}\right)^{-1}.
\]
If $E= \mathcal{M}_n(\Bbb C)$, then $\mu_E(A)=\|A\|$, the operator norm of $A$. In this case the $\mu$-synthesis problem is the classical Nevanlinna-Pick interpolation problem. If $E$ is the space of all scalar multiples of the identity matrix $I_{n \times n}$, then $\mu_E(A)=r(A)$, the spectral radius of $A$. For any linear subspace $E$ of $\mathcal{M}_n(\Bbb C)$ that contains the identity matrix, $r(A)\leq \mu_{E}(A)\leq \|A\|$. For the control-theory motivations behind $\mu_E$ an interested reader can see \cite{doyel}. For a given subspace $E\subseteq \mathcal{M}_n(\mathbb C)$, the aim of $\mu$-synthesis problem is to construct an analytic $n \times n$-matrix-valued function $F$ on the open unit disk $\mathbb D$ (with center at the origin) subject to a finite number of interpolation conditions such that $\mu_E(F(\lambda))<1$ for all $\lambda\in \Bbb D$. If $E=\{aI_{n\times n}: a \in \Bbb C\}$, then $\mu_E(A)=r(A)<1$ if and only if $\pi_n(\lambda_1, \dots, \lambda_n)\in \Bbb G_n$ (see \cite{costara}), where $\lambda_1,\dots, \lambda_n$ are eigenvalues of $A$ and $\pi_n$ is the symmetrization map on $\Bbb C^n$ defined by 
\[
\pi_n(z_1, \dots, z_n)=\left(\sum_{1\leq i\leq n} z_i,\sum_{1\leq
	i<j\leq n}z_iz_j,\dots, \prod_{i=1}^n z_i\right).
\]
If $E$ is the linear subspace of $2 \times 2$ diagonal matrices, then for any $A=(a_{ij})\in \mathcal{M}_2(\Bbb C)$, $\mu_E(A)<1$ if and only if $(a_{11},a_{22}, det A)\in \Bbb E$ (see \cite{A:W:Y}). Though the origin of these two domains is control engineering, the domains have been well studied by numerous mathematicians over the past two decades from the perspectives of complex geometry, function theory, and operator theory. An interested reader is referred to some exciting works of recent past \cite{A:W:Y,Agler,costara,E:Z,T:B,PalAdv,B:P,B:P5,B:P6,S:B,S:Pdecom,PalNagy} and references there in. In this article, we shall particularly focus on operator theoretic sides of $\mathbb G_n$ and $\Bbb E$.
\begin{defn}
	A compact subset $X\subset \Bbb C^n$ is said to be \textit{spectral set} for a commuting $n$-tuple of operators $\underline{T}=(T_1, \dots, T_n)$ if the Taylor joint spectrum $\sigma(\underline{T})$ of $\underline{T}$ is a subset of $X$ and von Neumann inequality holds for every rational function, that is, 
	\[
	\|f(\underline{T})\|\leq \|f\|_{\infty, X},
	\]
	for all rational functions $f$ with poles off $X$.
\end{defn}
For a detailed discussion on Taylor joint spectrum, an interested reader is referred to Taylor's works \cite{Taylor, Taylor1} or Curto's survey article \cite{curto}.  
\begin{defn}
	A commuting $n$-tuple of operators $(S_1, \dots, S_{n-1},P)$ for which the closed symmetrized polydisc $\Gamma_{n}(=\overline{\Bbb G}_n)$ is a spectral set is called a \textit{$\Gamma_{n}$-contraction}. In a similar fashion, a commuting triple of operators $(A,B, P)$ for which $\overline{\Bbb E}$ is a spectral set is called an \textit{$\Bbb E$-contraction}.
\end{defn}
 One of the most remarkable inventions in operator theory on these two domains is the existence and uniqueness of \textit{fundamental operator tuples} or \textit{fundamental operator pairs}. The concept of the fundamental operator of a $\Gamma$-contraction was first introduced in \cite{PalAdv}. It is proved in \cite{PalAdv} that to for every $\Gamma$-contraction $(S,P)$ there is a unique operator $A$ on $\mathcal{D}_P$ such that 
 \[
 S-S^*P=D_PAD_P,
 \]
 where $D_P=(I-P^*P)^{1/2}$ and $\mathcal{D}_P=\overline{\textit{Ran}}D_P$.
Later Pal in \cite{PalNagy} (see also Theorem 4.4 of \cite{A:Pal}) proved that for any $\Gamma_{n}$-contraction $(S_1, \dots, S_{n-1},P)$, there is a unique operator tuple $(A_1, \dots, A_{n-1})$ on $\mathcal{D}_P$ such that 
\[
S_i-S_{n-i}^*P=D_PA_iD_P, \text{ for each }i= 1, \dots, n-1.
\] 
It is known (\cite{T:B}) that to for every $\mathbb E$-contraction $(A,B,P)$, there are unique operators $F_1, F_2\in \mathcal{B}(\mathcal{D}_P)$ such that 
\[
A-B^*P=D_PF_1D_P, \qquad B-A^*P=D_PF_2D_P.
\] 
The fundamental operator tuple for a $\Gamma_{n}$-contraction or the fundamental operator pair for an $\mathbb E$-contraction plays the central role in almost all results in the existing operator theoretic literature on these two domains. For this reason such a tuple or a pair was named the fundamental operator tuple or the fundamental operator pair respectively. \\

A contraction $P$ is called completely non-unitary (c.n.u) if it has no reducing non-trivial proper subspace on which its restriction is unitary. A $\Gamma_{n}$-contraction $(S_1, \dots,S_{n-1},P)$ is called c.n.u if $P$ is c.n.u. Sz.-Nagy-Foias constructed an explicit minimal isometric dilation (see \cite{Nagy}, CH-VI) for a c.n.u contraction.
\begin{defn}
	A commuting $n$-tuple of operators $\underline{T}=(T_1, \dots, T_{n})$ on a Hilbert space $\mathcal{H}$, having $X$ as a spectral set, is said to have a rational dilation if there exist a Hilbert space $\mathcal{K}$, an isometry $V:\mathcal{H}\to \mathcal{K}$ and an $n$-tuple of commuting normal operators $\underline{N}=(N_1, \dots,N_n)$ on $\mathcal{K}$ with $\sigma(\underline{N})\subseteq bX$ such that $f(\underline{T})=V^*f(\underline{N})V$, for every rational function on $X$, where $bX$ is the \textit{distinguished boundary} (to be defined in Section \ref{background}) of $X$.
\end{defn}

\begin{defn}
	Let $(S_1, \dots,S_{n-1},P)$ be a $\Gamma_{n}$-contraction on $\mathcal{H}$. A commuting tuple $(V_1,\dots, V_{n-1},V)$ on $\mathcal{K}$ is said to be a $\Gamma_{n}$-isometric dilation of $(S_1, \dots, S_{n-1},P)$ if $\mathcal{H}\subseteq\mathcal{K}$, $(V_1, \dots, V_{n-1},V)$ is a $\Gamma_{n}$-isometry and 
	\[
	P_{\mathcal{H}}\left(V_1^{m_1}\dots V_{n-1}^{m_{n-1}}V^m\right)|_{\mathcal{H}}=S_1^{m_1}\dots S_{n-1}^{m_{n-1}}P^m,
	\]
	for all non-negative integers $m_i, m$. Moreover, the dilation is called minimal if $\mathcal{K}=\overline{\text{span}}\left\{V_1^{m_1}\dots V_{n-1}^{m_{n-1}}V^mh:h \in \mathcal{H} \text{ and }m_i, m\in \mathbb N\cup \{0\}\right\}$.
\end{defn}

It was established in \cite{Agler} that the rational dilation does hold for the symmetrized bidisc. In \cite{SPrational}, a Schaffer type explicit dilation and functional model were obtained for a $\Gamma_{n}$-contraction under certain conditions. The author of this paper and Pal constructed an explicit Sz.-Nagy-Foias type $\Gamma_{n}$-isometric dilation for a particular class of c.n.u $\Gamma_{n}$-contractions in \cite{B:P5}. In Section \ref{dilationforGamma} (see Theorem \ref{bisai}), we present a necessary condition for the existence of $\Gamma_{n}$-isometric dilation of a c.n.u $\Gamma_{n}$-contraction. In the dilation theory and the Theorem \ref{bisai}, the $\mathcal{F}_O$-tuples play a pivotal role.\\

We now know how important the role of $\mathcal{F}_O$-tuples is in the operator theory on the symmetrized polydisc $\mathbb G_n$. So it is important to know which pair of operator tuples qualify as the $\mathcal{F}_O$-tuples of a $\Gamma_{n}$-contraction. As a consequence of Theorem \ref{bisai}, we develop a necessary condition on the $\mathcal{F}_O$-tuples of a c.n.u $\Gamma_{n}$-contraction $(S_1, \dots, S_{n-1},P)$ and its adjoint $(S_1^*, \dots, S_{n-1}^*,P^*)$. Indeed, Theorem \ref{necessarytheorem} leaves a necessary condition on $(A_1, \dots, A_{n-1})$ and $(B_1, \dots, B_{n-1})$ for them to be the $\mathcal{F}_O$-tuples of $(S_1, \dots, S_{n-1},P)$ and $(S_1^*, \dots, S_{n-1}^*,P^*)$ respectively. Theorem \ref{necessarytheorem} of this article is a generalized version of Lemma 3.1 in \cite{B:P} which deals with pure $\Gamma_{n}$-contractions. A contraction $P$ is called \textit{pure} if $P^{*n}$ strongly converges to $0$ as $n$ tends to infinity. A $\Gamma_{n}$-contraction $(S_1, \dots, S_{n-1},P)$ is called pure if the last component, that is, $P$ is pure. A natural question arises: what about the converse of Theorem \ref{necessarytheorem} ? That is, given two tuples of operators $(A_1, \dots, A_{n-1})$ and $(B_1, \dots, B_{n-1})$ defined on some certain Hilbert spaces, does there exist a c.n.u $\Gamma_{n}$-contraction $(S_1, \dots, S_{n-1},P)$ such that $(A_1, \dots, A_{n-1})$ becomes the $\mathcal{F}_O$-tuple of $(S_1, \dots, S_{n-1},P)$ and $(B_1, \dots, B_{n-1})$ becomes the $\mathcal{F}_O$-tuple of $(S_1^*, \dots, S_{n-1}^*,P^*)$. We answer this question in Theorem \ref{sufficient}. Again, Theorem \ref{sufficient} can be regarded as a generalized version of Theorem 3.4 in \cite{B:P} which deals with pure contraction. Also our results generalize the existing similar results for $\Gamma$-contractions \cite{TirthaHari}.\\

Let $(F_1, F_2)$ and $(G_1,G_2)$ be two pairs of operators defined on two Hilbert spaces. All the results obtained for $\Gamma_{n}$-contractions are then applied to $\mathbb E$-contractions to decipher when there is a c.n.u $\mathbb E$-contraction $(A,B,P)$ such that $(F_1,F_2)$ becomes the $\mathcal{F}_O$-pair of $(A,B,P)$ and $(G_1, G_2)$ becomes the $\mathcal{F}_O$-pair of $(A^*,B^*,P^*)$. This is the content of Theorem \ref{tetrablock} and it is a generalization of Theorem 3 in \cite{TirthaHari}.\\

By virtue of the unitary map $U:H^2(\mathbb D) \to H^2(\mathbb T)$ defined by
\[
z^n\mapsto e^{int},
\]
the Hilbert spaces $H^2(\mathbb D)$ and $H^2(\mathbb T)$ are unitarily equivalent. Also, for a Hilbert space $\mathcal{E}$, $H^2(\mathcal{E})$ is unitarily equivalent to $H^2(\mathbb D)\otimes \mathcal{E}$. In the sequel, we shall identify these unitarily equivalent spaces and use them, without mention, interchangeably as per notational convenience.

\section{Background Material and Preparatory Results}\label{background}

This section contains a few known facts about c.n.u contractions, $\Gamma_{n}$-contractions and $\mathbb E$-contractions.

\subsection{Isometric dilations of a c.n.u contraction}

\noindent In this subsection we recall two minimal isometric dilation of a c.n.u contraction. Let $P$ on $\mathcal{H}$ be a c.n.u contraction.\\
\begin{enumerate}
	\item[(i)] If $P$ is a c.n.u contraction, then $\{P^nP^{*n}: n \geq 1\}$ is a non-increasing sequence of positive operators so that it converges strongly. Suppose $\mathcal{A}_* $ is the strong limit of $\{P^nP^{*n}:n \geq 1\}$. Then $P^n\mathcal{A}_* P^{*n}=\mathcal{A}_* $ for every $n\geq 1$. This defines an isometry 
	\begingroup
	\allowdisplaybreaks
	\begin{align*}
	V_*:& \overline{\textit{Ran}}(\mathcal{A}_*) \to \overline{\textit{Ran}}(\mathcal{A}_*)\\
	& \mathcal{A}_*^{1/2}x \mapsto \mathcal{A}_*^{1/2}P^*x.
	\end{align*}
	\endgroup
	An interested reader can see CH-3 of \cite{Kubrusly} for more details of the map $V_*$. Since $V_*$ is an isometry on $\overline{\textit{Ran}}(\mathcal{A}_*)$, let $U$ on $\mathcal{K}$ be the minimal unitary extension of $V_*$. Let $D_P=(I-P^*P)^{1/2}$, $D_{P^*}=(I-PP^*)^{1/2}$, $\mathcal{D}_P=\overline{\textit{Ran}}D_P$ and $\mathcal{D}_{P^*}=\overline{\textit{Ran}}D_{P^*}$. Define an isometry 
	\begingroup
	\allowdisplaybreaks
	\begin{align*}
	\varphi_1 :& \mathcal{H} \to H^2(\mathcal{D}_{P^*}) \oplus \mathcal{K}\\
	& h \mapsto \sum\limits_{n=0}^{\infty}z^nD_{P^*}P^{*n}h \oplus \mathcal{A}_*^{1/2}h.
	\end{align*}
	\endgroup
	It was proved in \cite{Douglas} that $\begin{pmatrix}
	M_z & 0\\
	0 & U^*
	\end{pmatrix}$ on $H^2(\mathcal{D}_{P^*})\oplus \mathcal{K}$ is a minimal isometric dilation of $\varphi_1 P\varphi_1 ^*$ and 
	\begin{equation}\label{eqn8}
	\varphi_1 P^*= \begin{pmatrix}
	M_z & 0\\
	0 & U^*
	\end{pmatrix}^*\varphi_1.
	\end{equation}
	\item[(ii)] The notion of \textit{characteristic function} of a contraction introduced by Sz.-Nagy and Foias \cite{Nagy}. For a contraction $P$ on $\mathcal{H}$, let $\Lambda_P=\{z \in \mathbb C: (I-zP^*) \text{ is invertible}\}$. For $z\in \Lambda_P$, the characteristic function of $P$ is defined as
	\[
	\Theta_P(z)= [-P + zD_{P^*}(I-zP^*)^{-1}D_P]|_{\mathcal{D}_P}.
	\]
	By virtue of the relation $PD_P=D_{P^*}P$ (Section I.3 of \cite{Nagy}), $\Theta_P(z)$ maps $\mathcal{D}_P$ into $\mathcal{D}_{P^*}$ for every $z\in \Lambda_P$. Clearly, $\Theta_P$ induces a multiplication operator $M_{\Theta_P}$ from $H^2(\mathcal{D}_P)$ into $H^2(\mathcal{D}_{P^*})$ by 
	\[
	M_{\Theta_P}g(z)=\Theta_P(z)g(z), \text{ for all }g \in H^2(\mathcal{D}_P) \text{ and }z\in \mathbb D.
	\]
	Consider 
	\[
	\Delta_P(t)=(I_{\mathcal{D}_P}-\Theta_P(e^{it})^*\Theta_P(e^{it}))^{1/2}
	\]
	for those $t$ at which $\Theta_P(e^{it})$ exists, on $L^2(\mathcal{D}_P)$ and the subspaces 
	\begingroup
	\allowdisplaybreaks
	\begin{align*}
	&\textbf{K}=H^2(\mathcal{D}_P) \oplus \overline{\Delta_P L^2(\mathcal{D}_P)},\\
	&\mathcal{Q}_P=\{\Theta_P f \oplus \Delta_P f: f \in H^2(\mathcal{D}_P)\},\\
	&\mathcal{H}_P=\textbf{K}\ominus \mathcal{Q}_P.
	\end{align*}
	\endgroup
	If $P$ be a c.n.u contraction defined on a Hilbert space $\mathcal{H}$, then (see CH-VI of \cite{Nagy}) there exist an isometry $\varphi_2: \mathcal{H}\to \textbf{K}$ with $\varphi_2\mathcal{H}=\mathcal{H}_P$ such that $\begin{pmatrix}
	M_z & 0\\
	0 & M_{e^{it}}
	\end{pmatrix}$ on $\textbf{K}$ is a minimal isometric dilation of $\varphi_2P\varphi_2^*$ and 
	\begin{equation}\label{eqn9}
	\varphi_2 P^*= \begin{pmatrix}
	M_z & 0\\
	0 & M_{e^{it}}
	\end{pmatrix}^*\varphi_2.
	\end{equation}
\end{enumerate}
	
	Before proceeding further, we recall a result from \cite{B:P6} which will be used in sequel.
	
	\begin{lem}[\cite{B:P6}, Lemma 3.1]\label{BisaiPallem}
	Let $E_i, K_i$ for $i=1,2$, be Hilbert spaces. Suppose $U$ is a unitary from $(H^2(\mathbb D)\otimes E_1) \oplus K_1$ to $(H^2(\mathbb D)\otimes E_2) \oplus K_2$ such that 
	\[
	(M_z \otimes I_{E_1}) \oplus W_1 = U((M_z \otimes I_{E_2}) \oplus W_2)U^*,
	\]
	where $W_1$ on $K_1$ and $W_2$ on $K_2$ are unitaries. Then $U$ is of the form $(I_{H^2(\mathbb D)}\otimes U_1)\oplus U_2$ for some unitaries $U_1:E_1 \to E_2$ and $U_2:K_1 \to K_2$.
\end{lem}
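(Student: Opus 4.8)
The plan is to analyze the unitary $U$ blockwise, exploiting the intertwining relation to show it cannot mix the shift part with the unitary part, and then to pin down the structure on each summand separately. First I would write $U$ as a $2\times 2$ operator matrix $\begin{pmatrix} U_{11} & U_{12}\\ U_{21} & U_{22}\end{pmatrix}$ with respect to the two orthogonal decompositions, where $U_{11}:H^2(\mathbb D)\otimes E_1 \to H^2(\mathbb D)\otimes E_2$, $U_{22}:K_1\to K_2$, and $U_{12},U_{21}$ are the off-diagonal corners. The intertwining hypothesis $((M_z\otimes I_{E_2})\oplus W_2)U^* = U^*((M_z\otimes I_{E_1})\oplus W_1)$ (equivalently $U((M_z\otimes I_{E_2})\oplus W_2) = ((M_z\otimes I_{E_1})\oplus W_1)U$) can then be read off entrywise.

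The key computation is to extract from the entrywise relations that the corners $U_{12}$ and $U_{21}$ vanish. Comparing the $(1,2)$ entries gives $U_{12}W_2 = (M_z\otimes I_{E_1})U_{12}$, so $U_{12}$ intertwines the unitary $W_2$ with the pure shift $M_z\otimes I_{E_1}$. The main obstacle, and the crux of the argument, is to conclude $U_{12}=0$ from this. I would argue spectrally: a bounded operator intertwining a unitary with a completely non-unitary (indeed pure) isometry must be zero, because iterating gives $U_{12}W_2^{\,k} = (M_z\otimes I_{E_1})^k U_{12}$, and since $(M_z\otimes I_{E_1})^{*k}\to 0$ strongly while $W_2$ is unitary, taking adjoints and letting $k\to\infty$ forces $U_{12}=0$; an entirely symmetric argument on the $(2,1)$ entry gives $U_{21}=0$. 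Thus $U = U_{11}\oplus U_{22}$ is block diagonal, and since $U$ is unitary so are $U_{11}$ and $U_{22}$; in particular $U_{22}:K_1\to K_2$ is the desired unitary $U_2$.

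It remains to identify the shift block $U_{11}$ as $I_{H^2(\mathbb D)}\otimes U_1$. The diagonal relation now reads $U_{11}(M_z\otimes I_{E_2}) = (M_z\otimes I_{E_1})U_{11}$, so $U_{11}$ is a unitary that commutes with the bilateral-to-one-sided shift structure; equivalently $U_{11}$ is a multiplication operator $M_\Theta$ whose unitary symbol $\Theta$ commutes with multiplication by $z$. The standard fact that a multiplier commuting with $M_z$ has a constant operator symbol then yields $\Theta(z)\equiv U_1$ for a fixed unitary $U_1:E_2\to E_1$ (or its inverse $E_1\to E_2$, after adjusting for which direction one states the claim), whence $U_{11}=I_{H^2(\mathbb D)}\otimes U_1$. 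Concretely I would verify this by evaluating on constant vectors and on the reproducing structure: $U_{11}(z^m\otimes e) = z^m\otimes(U_1 e)$ follows from the commutation with $M_z$ once one knows the action on degree-zero elements, and the degree-zero action defines $U_1$. Assembling the pieces gives $U = (I_{H^2(\mathbb D)}\otimes U_1)\oplus U_2$ with both $U_1$ and $U_2$ unitary, as claimed.
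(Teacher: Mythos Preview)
The paper does not actually prove this lemma; it is quoted from \cite{B:P6} (Lemma 3.1 there) and used as a black box, so there is no in-paper argument to compare your proposal against. That said, your approach is the standard and correct one: block-decompose $U$, read off the four entrywise intertwining relations, kill the off-diagonal blocks by the ``unitary cannot intertwine with a pure shift'' argument, and then identify the shift block as a constant multiplier. Your vanishing argument is fine once phrased carefully: from $U_{12}W_2=(M_z\otimes I)U_{12}$ one gets $W_2^{*k}U_{12}^*=U_{12}^*(M_z\otimes I)^{*k}$, and since $W_2$ is unitary while $(M_z\otimes I)^{*k}\to 0$ strongly, for each vector $h$ one has $\|U_{12}^*h\|=\|U_{12}^*(M_z\otimes I)^{*k}h\|\to 0$, hence $U_{12}=0$; the $(2,1)$ block is symmetric.

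One small caution: keep track of domains and codomains. As stated in the paper the relation $(M_z\otimes I_{E_1})\oplus W_1=U((M_z\otimes I_{E_2})\oplus W_2)U^*$ does not type-check with $U:(H^2\otimes E_1)\oplus K_1\to(H^2\otimes E_2)\oplus K_2$; the intended content (and the way it is used in \eqref{eqn10}) is the intertwining $((M_z\otimes I_{E_2})\oplus W_2)U=U((M_z\otimes I_{E_1})\oplus W_1)$, or equivalently with $U$ and $U^*$ swapped. Your write-up reproduces this directional slip in a couple of places (e.g.\ your $(1,1)$ relation should read $U_{11}(M_z\otimes I_{E_1})=(M_z\otimes I_{E_2})U_{11}$), so when you formalize the proof just fix the orientation once and carry it through consistently; the mathematics is unaffected.
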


 Since the isometries $\varphi_1$ and $\varphi_2$ give two minimal isometric dilations of c.n.u contraction $P$ and minimal isometric dilations of a contraction is unique up to unitary equivalence, therefore, there is a unitary $\mathfrak{U}: H^2(\mathcal{D}_{P^*})\oplus\overline{\Delta_P L^2(\mathcal{D}_P)}\to H^2(\mathcal{D}_{P^*})\oplus \mathcal{K}$ such that $\mathfrak{U}\varphi_2=\varphi_1$ and 
 \begin{equation}\label{eqn10}
 	\mathfrak{U}\begin{pmatrix}
 	M_z & 0\\
 	0 & M_{e^{it}}
 	\end{pmatrix}^*= \begin{pmatrix}
 	M_z & 0\\
 	0 & U^*
 	\end{pmatrix}^*\mathfrak{U}.
 \end{equation}
 Therefore, by Lemma \ref{BisaiPallem}, $\mathfrak{U}$ has the block matrix form
\begin{equation}\label{eqnU}
\mathfrak{U}=\begin{pmatrix}
I_{H^2(\mathbb D)}\otimes V_1 & 0\\
0 & V_2
\end{pmatrix},
\end{equation}
for some unitaries $V_1\in \mathcal{B}(\mathcal{D}_{P^*})$ and $V_2\in \mathcal{B}(\overline{\Delta_P L^2(\mathcal{D}_P)},\mathcal{K})$. Equations \eqref{eqn8}, \eqref{eqn9}, \eqref{eqn10} and \eqref{eqnU} will be used very frequently in sequels.\\

\subsection{$\Gamma_n$-contractions, $\mathbb E$-contractions and their special classes}
We recall from literature a few definitions and facts about the operators associated with $\Gamma_{n}$ and $\overline{\mathbb E}$.
\begin{defn}
	A commuting $n$-tuple of Hilbert space operators $(S_1, \dots,
	 S_{n-1},P)$ for which $\Gamma_{n}$ is a spectral set is called a $\Gamma_{n}$-contraction. Similarly, a commuting triple of Hilbert space operators $(A,B,P)$ for which $\overline{\mathbb{E}}$ is a spectral set is called an $\mathbb E$-contraction.
\end{defn}
The sets $\Gamma_{n}$ and $\overline{\mathbb E}$ are not convex but polynomially convex. By an application of Oka-Weil theorem, we have the following characterizations for $\Gamma_{n}$-contractions and for $\mathbb E$-contractions.

\begin{lem}[\cite{S:Pdecom}, Theorem 2.4]\label{SPLem}
	A commuting tuple of bounded operators $(S_1, \dots,S_{n-1},P)$ is a $\Gamma_{n}$-contraction if and only if 
	\[
	\|f(S_1, \dots, S_{n-1},P)\|\leq \|f\|_{\infty, \Gamma_{n}}
	\]
	for any holomorphic polynomial $f$ in $n$-variables.
\end{lem}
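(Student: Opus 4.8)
The plan is to prove both implications, with the reverse one carrying essentially all of the content. The forward implication is immediate from the definition of a spectral set: if $\Gamma_n$ is a spectral set for $(S_1, \dots, S_{n-1}, P)$, then the von Neumann inequality holds for every rational function with poles off $\Gamma_n$, and since a holomorphic polynomial is a rational function with no poles at all, the polynomial inequality is merely a special case.

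For the reverse implication, write $\underline{T} = (S_1, \dots, S_{n-1}, P)$ and assume $\|p(\underline{T})\| \leq \|p\|_{\infty, \Gamma_n}$ for every holomorphic polynomial $p$. To conclude that $\Gamma_n$ is a spectral set I must verify two things: that the Taylor joint spectrum $\sigma(\underline{T})$ is contained in $\Gamma_n$, and that the polynomial inequality propagates to all rational functions with poles off $\Gamma_n$. I would settle the spectral containment first. By the spectral mapping theorem for the Taylor joint spectrum, $\sigma(p(\underline{T})) = p(\sigma(\underline{T}))$ for every polynomial $p$, so for each $w \in \sigma(\underline{T})$ one has $|p(w)| \leq r(p(\underline{T})) \leq \|p(\underline{T})\| \leq \|p\|_{\infty, \Gamma_n}$. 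This says precisely that $w$ lies in the polynomially convex hull of $\Gamma_n$; since $\Gamma_n$ is polynomially convex, that hull is $\Gamma_n$ itself, whence $\sigma(\underline{T}) \subseteq \Gamma_n$.

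The heart of the argument is the passage from polynomials to rational functions, which is exactly where the Oka-Weil theorem enters. Given a rational function $f$ with poles off $\Gamma_n$, $f$ is holomorphic on an open neighborhood of the compact polynomially convex set $\Gamma_n$, so Oka-Weil furnishes polynomials $p_k \to f$ uniformly on $\Gamma_n$. The polynomial inequality then makes $\{p_k(\underline{T})\}$ Cauchy in operator norm, since $\|p_k(\underline{T}) - p_l(\underline{T})\| \leq \|p_k - p_l\|_{\infty, \Gamma_n} \to 0$, so this sequence converges. Because $\sigma(\underline{T}) \subseteq \Gamma_n$ and $f$ has its poles off $\Gamma_n$, the Taylor functional calculus gives meaning to $f(\underline{T})$, and by continuity of that calculus the norm limit of $p_k(\underline{T})$ equals $f(\underline{T})$. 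Letting $k \to \infty$ in the polynomial inequality yields $\|f(\underline{T})\| \leq \|f\|_{\infty, \Gamma_n}$, completing the reverse implication.

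I expect the main obstacle to lie in the Oka-Weil step: one must ensure the approximating polynomials converge uniformly on a full neighborhood of $\sigma(\underline{T})$, not merely on $\Gamma_n$, so that continuity of the Taylor--Waelbroeck functional calculus legitimately identifies $\lim_k p_k(\underline{T})$ with $f(\underline{T})$, and one must have the polynomial convexity of $\Gamma_n$ genuinely in hand. The remaining ingredients --- the spectral mapping theorem and the trivial forward direction --- are formal.
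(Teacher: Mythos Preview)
Your proposal is correct and follows precisely the route the paper indicates: the lemma is quoted from \cite{S:Pdecom} without a detailed proof, but the paper prefaces it by noting that $\Gamma_n$ is polynomially convex and that the characterization follows ``by an application of Oka-Weil theorem,'' which is exactly the mechanism you spell out. Your handling of the spectral containment via the spectral mapping theorem and polynomial convexity, followed by the Oka--Weil approximation to pass from polynomials to rational functions, is the standard argument and matches the paper's intended approach.
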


\begin{lem}[\cite{T:B}, Lemma 3.3]\label{TBLem}
	A commuting triple of bounded operators $(A,B,P)$ is a tetrablock contraction if and only if \[
	\|f(A,B,P)\|\leq \|f\|_{\infty,\overline{\mathbb E}}
	\]
	for any holomorphic polynomials $f$ in three variables. 
\end{lem}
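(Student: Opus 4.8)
The plan is to prove the nontrivial implication by reducing the von Neumann inequality for all rational functions with poles off $\overline{\mathbb E}$ to the inequality for polynomials, exploiting the polynomial convexity of $\overline{\mathbb E}$ recorded just above together with the Oka--Weil approximation theorem. The forward implication is immediate: every holomorphic polynomial in three variables is a rational function whose poles lie off the compact set $\overline{\mathbb E}$, so if $(A,B,P)$ is a tetrablock contraction (that is, if $\overline{\mathbb E}$ is a spectral set), then in particular $\|f(A,B,P)\|\le\|f\|_{\infty,\overline{\mathbb E}}$ for every polynomial $f$.

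For the converse, assume the polynomial inequality holds. First I would establish the spectral containment $\sigma(A,B,P)\subseteq\overline{\mathbb E}$. Fix $\lambda\in\sigma(A,B,P)$. For any holomorphic polynomial $p$ in three variables, the spectral mapping property of the Taylor joint spectrum gives $p(\lambda)\in\sigma\big(p(A,B,P)\big)$, whence
\[
|p(\lambda)|\le\|p(A,B,P)\|\le\|p\|_{\infty,\overline{\mathbb E}}.
\]
Since this holds for every $p$, the point $\lambda$ lies in the polynomial hull of $\overline{\mathbb E}$; but $\overline{\mathbb E}$ is polynomially convex, so $\lambda\in\overline{\mathbb E}$. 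Thus $\sigma(A,B,P)\subseteq\overline{\mathbb E}$, and the Taylor functional calculus $f\mapsto f(A,B,P)$ is well defined for every $f$ holomorphic in a neighbourhood of $\overline{\mathbb E}$.

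Next I would upgrade the polynomial inequality to all rational functions with poles off $\overline{\mathbb E}$. Let $f$ be such a function; its poles avoid the compact set $\overline{\mathbb E}$, so $f$ is holomorphic on an open neighbourhood $U$ of $\overline{\mathbb E}$. Because $\overline{\mathbb E}$ is polynomially convex, it admits a neighbourhood basis of polynomial polyhedra, so I may select a compact polynomially convex set $L$ with $\overline{\mathbb E}\subseteq\mathrm{int}(L)\subseteq L\subseteq U$. Applying the Oka--Weil theorem on $L$ produces polynomials $p_m\to f$ uniformly on $L$, hence uniformly on a neighbourhood of $\sigma(A,B,P)$. Continuity of the Taylor functional calculus then yields $p_m(A,B,P)\to f(A,B,P)$ in operator norm, while uniform convergence on $\overline{\mathbb E}$ gives $\|p_m\|_{\infty,\overline{\mathbb E}}\to\|f\|_{\infty,\overline{\mathbb E}}$. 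Passing to the limit in $\|p_m(A,B,P)\|\le\|p_m\|_{\infty,\overline{\mathbb E}}$ delivers $\|f(A,B,P)\|\le\|f\|_{\infty,\overline{\mathbb E}}$, which is exactly the assertion that $\overline{\mathbb E}$ is a spectral set.

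The two limiting steps are routine; the main obstacle is the interface between approximation and functional calculus. Oka--Weil supplies uniform approximation only on $\overline{\mathbb E}$, whereas continuity of the Taylor calculus requires uniform convergence on a full neighbourhood of the spectrum. This is precisely why polynomial convexity must be invoked twice: once, through the polynomial hull, to secure the spectral inclusion $\sigma(A,B,P)\subseteq\overline{\mathbb E}$, and once more, through the neighbourhood basis of polynomial polyhedra, to manufacture an approximating sequence converging uniformly on an open set containing $\sigma(A,B,P)$ rather than merely on $\overline{\mathbb E}$ itself.
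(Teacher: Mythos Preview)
Your proof is correct and follows exactly the route the paper indicates: the paper does not give a detailed argument but simply cites \cite{T:B} and remarks that the characterization follows from polynomial convexity of $\overline{\mathbb E}$ together with the Oka--Weil theorem, which is precisely what you carry out in full.
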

It is evident from the above results that the adjoint of a $\Gamma_{n}$-contraction or an $\mathbb E$-contraction is also a $\Gamma_{n}$-contraction or an $\mathbb E$-contraction and if $(S_1, \dots, S_{n-1},P)$ is a $\Gamma_{n}$-contraction or $(A,B,P)$ is an $\mathbb E$-contraction, then $P$ is a contraction.

Unitaries, isometries, c.n.u, pure etc. are special classes of contractions. There are analogous classes for $\Gamma_{n}$-contractions and $\mathbb E$-contractions in the literature. Before discussing them, we shall recollect a few facts from literature.

For a compact subset $X$ of $\mathbb C^n$, let $\mathcal{A}(X)$ be the algebra of functions continuous in $X$ and holomorphic in the interior of $X$. A \textit{boundary} of $X$ (with respect to $\mathcal{A}(X)$) is a closed subset of $X$ on which every function in $\mathcal{A}(X)$ attains its maximum modulus. It follows from the theory of uniform algebras that the intersection of all boundaries of $X$ is also a boundary of $X$ (with respect to $\mathcal{A}(X)$) (see Theorem 9.1 of \cite{wermer}). This particular smallest one is called the \textit{distinguished boundary} of $X$ and is denoted by $bX$. The distinguished boundary of $\Gamma_{n}$, denoted by $b\Gamma_{n}$, was determined in \cite{E:Z} to be the symmetrization of the $n$-torus, i.e.,
\[
b\Gamma_{n}=\pi_n(\mathbb T^n)=\left\{\left(\sum\limits_{1\leq i\leq n}z_i, \sum\limits_{1\leq i<j\leq n}z_iz_j, \dots, \prod_{i=1}^{n} z_i\right): |z_i|=1, i= 1, \dots, n\right\}.
\]
Also, we obtained from literature (see \cite{A:W:Y}) that the distinguished boundary of $\overline{\mathbb{E}}$ is the following set:
\[
b\mathbb E=\left\{(a,b,p)\in \overline{\mathbb E}: |p|=1\right\}.
\]
Now we are in a position to present the special classes of $\Gamma_{n}$-contractions.
\begin{defn}
	Let $S_1,\dots, S_{n-1},P$ be commuting operators on $\mathcal H$.
	Then $(S_1,\dots, S_{n-1},P)$ is called
	\begin{itemize}
		\item[(i)] a $\Gamma_n$-\textit{unitary} if $S_1,\dots, S_{n-1},P$
		are normal operators and the Taylor joint spectrum $\sigma_T
		(S_1,\dots, S_{n-1},P)$ is a subset of $b\Gamma_n$ ;
		
		\item[(ii)] a $\Gamma_n$-isometry if there exist a Hilbert space
		$\mathcal K \supseteq \mathcal H$ and a $\Gamma_n$-unitary
		$(T_1,\dots,T_{n-1},U)$ on $\mathcal K$ such that $\mathcal H$ is
		a joint invariant subspace of $T_1,\dots, T_{n-1},U$ and that
		$(T_1|_{\mathcal H},\dots, \\T_{n-1}|_{\mathcal H},U|_{\mathcal
			H})=(S_1,\dots, S_{n-1},P)$;
		
		\item[(iii)] a c.n.u $\Gamma_n$-contraction if $(S_1,\dots, S_{n-1},P)$ is a $\Gamma_n$-contraction and $P$ is a c.n.u contraction;
		
		\item[(iv)] a pure $\Gamma_{n}$-contraction if $(S_1, \dots, S_{n-1},P)$ is a $\Gamma_{n}$-contraction and $P$ is a pure contraction.
	\end{itemize}
	
\end{defn}

We obtain from literature the following
analogous classes of $\mathbb E$-contractions.

\begin{defn}
	Let $A,B,P$ be commuting operators on $\mathcal H$. Then $(A,B,P)$
	is called
	\begin{itemize}
		\item[(i)] an $\mathbb E$-\textit{unitary} if $A,B,P$ are normal
		operators and the Taylor joint spectrum $\sigma_T (A,B,\\P)$ is a
		subset of $b\mathbb{E}$ ;
		
		\item[(ii)] an $\mathbb E$-isometry if there exists a Hilbert space
		$\mathcal K \supseteq \mathcal H$ and an $\mathbb E$-unitary
		$(Q_1,Q_2,V)$ on $\mathcal K$ such that $\mathcal H$ is a joint
		invariant subspace of $Q_1,Q_2,V$ and that $(Q_1|_{\mathcal H},
		Q_2|_{\mathcal H},\\V|_{\mathcal H})=(A,B,P)$ ;
		
		\item[(iii)] a c.n.u $\mathbb E$-contraction if $(A,B,P)$ is an $\mathbb E$-contraction and $P$  is a c.n.u contraction;
		
		\item[(iv)] a pure $\mathbb E$-contraction if $(A,B,P)$ is an $\mathbb E$-contraction and $P$ is a pure contraction.
	\end{itemize}
	
\end{defn}

The theorem below provides a set of characterizations of a $\Gamma_n$-unitary.
\begin{thm}[\cite{SPrational}, Theorem 4.2]\label{gammauni}
	Let $S_1, \dots, S_{n-1},P$ be commuting operators on  a Hilbert space $\mathcal{H}$. Then the following are equivalent:
	\begin{itemize}
		\item[(i)] $(S_1, \dots, S_{n-1},P)$ is a $\Gamma_n$-unitary;
		\item[(ii)] there exist commuting unitary operators $U_1, \dots, U_n$ on $\mathcal{H}$ such that \[
		(S_1, \dots, S_{n-1},P)=\pi_n(U_1, \dots, U_n);
		\]
		\item[(iii)] $(S_1, \dots, S_{n-1},P)$ is a $\Gamma_n$-contraction and $P$ is an unitary;
		\item[(iv)] $P$ is unitary, $S_i=S_{n-i}^*P$ for each $i=1, \dots, n-1$ and $\bigg(\dfrac{n-1}{n}S_1, \dfrac{n-2}{n}S_2, \dots,\\ \dfrac{1}{n}S_{n-1}\bigg)$ is a $\Gamma_{n-1}$-contraction.
	\end{itemize}
\end{thm}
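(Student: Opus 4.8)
The plan is to prove the cycle (i) $\Rightarrow$ (ii) $\Rightarrow$ (iii) $\Rightarrow$ (iv) $\Rightarrow$ (i), reducing every operator statement to the scalar geometry of $b\Gamma_n=\pi_n(\mathbb{T}^n)$ by means of the spectral theorem for commuting normal tuples. Two scalar facts drive the argument. First, the \emph{derivative identity}: if $q(z)=\prod_{i=1}^n(z-\lambda_i)=z^n-s_1z^{n-1}+\cdots+(-1)^ns_n$, then $\tfrac1n q'$ is a monic polynomial of degree $n-1$ whose elementary symmetric functions are $\tfrac{n-k}{n}s_k$, so that $(\tfrac{n-1}{n}s_1,\dots,\tfrac1n s_{n-1})=\pi_{n-1}(\mu_1,\dots,\mu_{n-1})$, where $\mu_1,\dots,\mu_{n-1}$ are the critical points of $q$; combined with the Gauss--Lucas theorem this yields the containment $\{(\tfrac{n-1}{n}s_1,\dots,\tfrac1n s_{n-1}):(s_1,\dots,s_{n-1},p)\in\Gamma_n\}\subseteq\Gamma_{n-1}$. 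Second, the classical criterion for \emph{self-inversive} polynomials (Cohn): a polynomial whose coefficients obey the conjugate-reciprocal relations induced by $s_i=\bar s_{n-i}\,s_n$ with $|s_n|=1$ has all of its zeros on $\mathbb{T}$ exactly when every zero of its derivative lies in $\overline{\mathbb{D}}$.

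The links (i) $\Rightarrow$ (ii) $\Rightarrow$ (iii) are routine. For (i) $\Rightarrow$ (ii), the joint spectral measure $E$ of the $\Gamma_n$-unitary is supported on $\sigma_T\subseteq b\Gamma_n=\pi_n(\mathbb{T}^n)$; since $\pi_n\colon\mathbb{T}^n\to b\Gamma_n$ is a continuous surjection of compact metric spaces, a Borel measurable selection theorem furnishes a right inverse $\rho=(\rho_1,\dots,\rho_n)$, and the operators $U_j:=\rho_j(S_1,\dots,S_{n-1},P)$, defined by functional calculus against $E$, are commuting unitaries with $\pi_n(U_1,\dots,U_n)=(S_1,\dots,S_{n-1},P)$. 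For (ii) $\Rightarrow$ (iii), $P=\prod_iU_i$ is unitary and, for any polynomial $f$, the relation $f(S_1,\dots,P)=(f\circ\pi_n)(U_1,\dots,U_n)$ gives $\|f(S_1,\dots,P)\|\le\sup_{\mathbb{T}^n}|f\circ\pi_n|=\|f\|_{\infty,b\Gamma_n}=\|f\|_{\infty,\Gamma_n}$ by the maximum principle, so Lemma \ref{SPLem} yields the $\Gamma_n$-contraction.

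For (iii) $\Rightarrow$ (iv): the fundamental operator tuple of the $\Gamma_n$-contraction satisfies $S_i-S_{n-i}^*P=D_PA_iD_P$, and $P$ unitary forces $D_P=0$, hence $S_i=S_{n-i}^*P$ for every $i$. For the final clause, set $T_i=\tfrac{n-i}{n}S_i$. Given a polynomial $g$ in $n-1$ variables, let $h$ be the polynomial obtained by substituting the scalings, so that $h(S_1,\dots,S_{n-1},P)=g(T_1,\dots,T_{n-1})$; by the containment of the first paragraph $\|h\|_{\infty,\Gamma_n}\le\|g\|_{\infty,\Gamma_{n-1}}$, and the $\Gamma_n$-contraction von Neumann inequality gives $\|g(T_1,\dots,T_{n-1})\|\le\|g\|_{\infty,\Gamma_{n-1}}$. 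By Lemma \ref{SPLem}, applied in $n-1$ variables, $(T_1,\dots,T_{n-1})$ is a $\Gamma_{n-1}$-contraction.

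Finally (iv) $\Rightarrow$ (i), which I expect to be the main obstacle. Normality comes for free: from $S_i=S_{n-i}^*P$ we get $S_j^*=P^*S_{n-j}=S_{n-j}P^*$, and since $P$ is unitary it commutes with every $S_k$, so a one-line computation gives $S_iS_j^*=S_{n-j}P^*S_i=S_j^*S_i$ for all $i,j$; thus $(S_1,\dots,S_{n-1},P)$ is a commuting normal tuple. Passing to the joint spectrum through the Gelfand transform of the commutative $C^*$-algebra they generate, each point $(\sigma_1,\dots,\sigma_{n-1},\tau)\in\sigma_T$ satisfies $|\tau|=1$ and $\sigma_i=\bar\sigma_{n-i}\tau$, while the spectral mapping theorem together with the $\Gamma_{n-1}$-contraction hypothesis places $(\tfrac{n-1}{n}\sigma_1,\dots,\tfrac1n\sigma_{n-1})$ in $\Gamma_{n-1}$, i.e.\ the critical points of $q(z)=z^n-\sigma_1z^{n-1}+\cdots+(-1)^n\tau$ lie in $\overline{\mathbb{D}}$. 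The relations $\sigma_i=\bar\sigma_{n-i}\tau$ make $q$ self-inversive, so Cohn's criterion drives all zeros of $q$ onto $\mathbb{T}$ and hence $(\sigma_1,\dots,\sigma_{n-1},\tau)\in\pi_n(\mathbb{T}^n)=b\Gamma_n$. Therefore $\sigma_T\subseteq b\Gamma_n$ and the tuple is a $\Gamma_n$-unitary. The delicate part is precisely this last reduction: recognizing the conjugate-reciprocal symmetry as self-inversiveness, invoking the classical zeros-on-the-circle criterion, and making the passage from the operator identities to pointwise relations on the joint spectrum rigorous through the joint spectral measure.
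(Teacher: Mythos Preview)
The paper does not prove this theorem; it is quoted verbatim from \cite{SPrational} (Theorem~4.2 there) and used as background, so there is no proof in the present paper to compare against.

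Your argument is correct and is essentially the standard route in the literature on $\Gamma_n$-unitaries. A few remarks on the points you flagged as delicate. In (i)~$\Rightarrow$~(ii) the measurable selection is legitimate: $\pi_n:\mathbb T^n\to b\Gamma_n$ is a continuous surjection of compact metric spaces, hence admits a Borel right inverse, and the Borel functional calculus against the joint spectral measure then produces commuting unitaries (unitarity because each $\rho_j$ is $\mathbb T$-valued). In (iii)~$\Rightarrow$~(iv) your appeal to the $\mathcal F_O$-tuple is safe, since its existence is proved by an independent positivity argument and does not rely on the present theorem; the Gauss--Lucas reduction to $\Gamma_{n-1}$ is exactly right. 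In (iv)~$\Rightarrow$~(i) the normality computation is clean (the key being that $P$ unitary forces $S_i$ to commute with $P^*$), and the identification of $\sigma_i=\bar\sigma_{n-i}\tau$, $|\tau|=1$, as the self-inversive condition on $q(z)=z^n-\sigma_1 z^{n-1}+\cdots+(-1)^n\tau$ is the correct move; the classical result you invoke (a self-inversive polynomial has all zeros on $\mathbb T$ iff all zeros of its derivative lie in $\overline{\mathbb D}$) is precisely what closes the argument.
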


For a given tuple, how does one determine whether it is a $\Gamma_{n}$-isometry or not? The next theorem gives necessary and sufficient conditions.

 \begin{thm}[\cite{SPrational}, Theorem 4.4]\label{gammaiso}
	Let $S_1, \dots, S_{n-1},P$ be commuting operators on  a Hilbert space $\mathcal{H}$. Then the following are equivalent:
	\begin{itemize}
		\item[(i)] $(S_1, \dots, S_{n-1},P)$ is a $\Gamma_n$-isometry;
		\item[(ii)] $(S_1, \dots, S_{n-1},P)$ is a $\Gamma_n$-contraction and $P$ is an isometry;
		\item[(iii)] $P$ is isometry, $S_i=S_{n-i}^*P$ for each $i=1, \dots, n-1$ and $\Big(\dfrac{n-1}{n}S_1, \dfrac{n-2}{n}S_2, \dots,\\ \dfrac{1}{n}S_{n-1}\Big)$ is a $\Gamma_{n-1}$-contraction.
	\end{itemize}
\end{thm}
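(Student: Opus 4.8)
The plan is to establish the cycle $(i)\Rightarrow(ii)\Rightarrow(iii)\Rightarrow(i)$, mirroring the architecture of Theorem \ref{gammauni} and leaning throughout on the polynomial von Neumann characterization of Lemma \ref{SPLem} together with the existence of the fundamental operator tuple recalled in the Introduction. For $(i)\Rightarrow(ii)$, I would argue by restriction. If $(S_1,\dots,S_{n-1},P)$ is a $\Gamma_n$-isometry, it is the restriction to a joint invariant subspace $\mathcal H$ of a $\Gamma_n$-unitary $(T_1,\dots,T_{n-1},U)$ on some $\mathcal K\supseteq\mathcal H$. For any holomorphic polynomial $f$ one has $f(S_1,\dots,S_{n-1},P)=f(T_1,\dots,T_{n-1},U)|_{\mathcal H}$ because restriction to an invariant subspace is an algebra homomorphism on polynomials, so $\|f(S_1,\dots,P)\|\le\|f(T_1,\dots,U)\|\le\|f\|_{\infty,\Gamma_n}$, the last inequality holding since a $\Gamma_n$-unitary is normal with spectrum in $b\Gamma_n\subseteq\Gamma_n$; by Lemma \ref{SPLem} the tuple is a $\Gamma_n$-contraction. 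Moreover $U$ is unitary by Theorem \ref{gammauni}, and the restriction of a unitary to an invariant subspace is an isometry, so $P=U|_{\mathcal H}$ is an isometry.

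For $(ii)\Rightarrow(iii)$ there are two things to check. Since $P$ is an isometry, $D_P=(I-P^*P)^{1/2}=0$ and hence $\mathcal D_P=\{0\}$; the defining relation of the fundamental operator tuple then forces $S_i-S_{n-i}^*P=D_PA_iD_P=0$, i.e. $S_i=S_{n-i}^*P$ for each $i$. For the $\Gamma_{n-1}$-contraction assertion, the key observation is a scalar one via Gauss--Lucas. Given a holomorphic polynomial $g$ in $n-1$ variables, set
\[
F(z_1,\dots,z_{n-1},z_n):=g\!\left(\tfrac{n-1}{n}z_1,\dots,\tfrac1n z_{n-1}\right),
\]
a polynomial not involving the last variable, so that $F(S_1,\dots,S_{n-1},P)=g\!\left(\tfrac{n-1}{n}S_1,\dots,\tfrac1n S_{n-1}\right)$. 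If $(s_1,\dots,s_{n-1},p)=\pi_n(z_1,\dots,z_n)\in\Gamma_n$ with all $|z_k|\le 1$, then the elementary symmetric functions of the critical points of $\prod_{k}(z-z_k)$ are exactly $\tfrac{n-1}{n}s_1,\dots,\tfrac1n s_{n-1}$, and by Gauss--Lucas these critical points lie in the convex hull of $\{z_k\}\subseteq\overline{\mathbb D}$; hence $\left(\tfrac{n-1}{n}s_1,\dots,\tfrac1n s_{n-1}\right)\in\Gamma_{n-1}$. Consequently $\|F\|_{\infty,\Gamma_n}\le\|g\|_{\infty,\Gamma_{n-1}}$, and Lemma \ref{SPLem} gives $\|g(\tfrac{n-1}{n}S_1,\dots,\tfrac1n S_{n-1})\|=\|F(S_1,\dots,P)\|\le\|g\|_{\infty,\Gamma_{n-1}}$. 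The $\Gamma_{n-1}$-analogue of Lemma \ref{SPLem} then yields that $\left(\tfrac{n-1}{n}S_1,\dots,\tfrac1n S_{n-1}\right)$ is a $\Gamma_{n-1}$-contraction. (Note this last step uses only that the tuple is a $\Gamma_n$-contraction.)

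The remaining direction $(iii)\Rightarrow(i)$ is where I expect the real work to lie. Since $P$ is an isometry it admits a minimal unitary extension $U$ on some $\mathcal K\supseteq\mathcal H$ with $\mathcal H$ invariant and $U|_{\mathcal H}=P$. I would then extend each $S_i$ to an operator $T_i$ on $\mathcal K$ that commutes with $U$, keeps $\mathcal H$ invariant, and satisfies $T_i|_{\mathcal H}=S_i$, and verify that $(T_1,\dots,T_{n-1},U)$ is a $\Gamma_n$-unitary by checking condition $(iv)$ of Theorem \ref{gammauni}: $U$ is unitary by construction, the relation $T_i=T_{n-i}^*U$ should propagate from $S_i=S_{n-i}^*P$ across the generating vectors $U^{*m}\mathcal H$, and the scaled tuple $\left(\tfrac{n-1}{n}T_1,\dots,\tfrac1n T_{n-1}\right)$ must be shown to be a $\Gamma_{n-1}$-contraction. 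Concretely I would pass through the Wold decomposition $P=P_u\oplus P_s$: on the unitary summand $(S_1,\dots,S_{n-1},P_u)$ already satisfies the hypotheses of Theorem \ref{gammauni}$(iv)$ and is thus a $\Gamma_n$-unitary, while on the shift summand one realizes the $S_i$ as analytic Toeplitz-type operators and dilates $P_s$ to the bilateral shift, transferring the $\Gamma_{n-1}$-contractivity from the symbol level.

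\textbf{Main obstacle.} The delicate points are all concentrated in $(iii)\Rightarrow(i)$: proving that the commutant extension $T_i$ of $S_i$ is well defined and bounded, that the defining relation $T_i=T_{n-i}^*U$ survives the extension, and above all that the scaled extended tuple remains a $\Gamma_{n-1}$-contraction so that Theorem \ref{gammauni} applies. Making the unitary/shift split of $P$ reduce each $S_i$ (which requires using $S_i=S_{n-i}^*P$, since commutation with $P$ alone does not give reducibility for the $S_i$) and controlling the norms on the bilateral-shift part are the steps I expect to demand the most care; everything else reduces to Lemma \ref{SPLem}, Theorem \ref{gammauni}, and the fundamental operator identity.
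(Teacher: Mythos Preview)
The paper itself does not prove this theorem; it is quoted verbatim as Theorem 4.4 of \cite{SPrational} and used as background, with no argument supplied. There is therefore nothing in the present paper to compare your proposal against.

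On the merits of your proposal: the arguments for $(i)\Rightarrow(ii)$ and $(ii)\Rightarrow(iii)$ are correct and standard. The restriction-from-a-$\Gamma_n$-unitary argument is clean; collapsing the fundamental operator identity $S_i-S_{n-i}^*P=D_PA_iD_P$ via $D_P=0$ is exactly the right move; and the Gauss--Lucas route to the $\Gamma_{n-1}$-contractivity of $\left(\tfrac{n-1}{n}S_1,\dots,\tfrac1n S_{n-1}\right)$ is the canonical one (and, as you observe, needs only that the tuple is a $\Gamma_n$-contraction, not that $P$ is an isometry). For $(iii)\Rightarrow(i)$ you have correctly located where the substance lies and sketched the natural Wold-decomposition-plus-commutant-extension strategy. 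You have also honestly flagged the genuine technical points: that the $S_i$ reduce along the Wold splitting of $P$ (which indeed requires $S_i=S_{n-i}^*P$ and not merely $[S_i,P]=0$), that the relation $T_i=T_{n-i}^*U$ propagates to the extension, and that $\Gamma_{n-1}$-contractivity survives passage to the bilateral shift. Those steps are not yet carried out in your sketch, so as written this direction remains a plan rather than a proof; but the architecture is sound and matches how such results are typically established in the literature.
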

We conclude this section by stating analogues of Theorems \ref{gammauni} and
\ref{gammaiso} for $\mathbb E$-contractions.

\begin{thm}[{\cite{T:B}, Theorem 5.4}]\label{thm:tu}
	Let $\underline N = (N_1, N_2, N_3)$ be a commuting triple of
	bounded operators. Then the following are equivalent.
	
	\begin{enumerate}
		
		\item $\underline N$ is an $\mathbb E$-unitary,
		
		\item $N_3$ is a unitary and $\underline N$ is an $\mathbb
		E$-contraction,
		
		\item $N_3$ is a unitary, $N_2$ is a contraction and $N_1 = N_2^*
		N_3$.
	\end{enumerate}
\end{thm}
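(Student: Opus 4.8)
The plan is to prove Theorem \ref{thm:tu} by establishing the cyclic chain of implications $(1)\Rightarrow(2)\Rightarrow(3)\Rightarrow(1)$, mirroring the structure of the already-established Theorem \ref{gammauni} for $\Gamma_n$-unitaries. The implication $(1)\Rightarrow(2)$ is the easiest: if $\underline N=(N_1,N_2,N_3)$ is an $\mathbb E$-unitary, then by definition $N_1,N_2,N_3$ are commuting normals with $\sigma_T(\underline N)\subseteq b\mathbb E$. Since $b\mathbb E=\{(a,b,p)\in\overline{\mathbb E}:|p|=1\}$, the spectral projection of the normal operator $N_3$ is supported on the unit circle, forcing $N_3$ to be unitary; and because $\sigma_T(\underline N)\subseteq b\mathbb E\subseteq\overline{\mathbb E}$ with the von Neumann inequality automatic for commuting normals (their joint spectrum is a spectral set via the spectral theorem), $\underline N$ is an $\mathbb E$-contraction.

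Next I would handle $(2)\Rightarrow(3)$. Assuming $N_3$ is unitary and $\underline N$ is an $\mathbb E$-contraction, I would invoke the defining relations of the $\mathcal F_O$-pair from the introduction: there exist unique $F_1,F_2\in\mathcal B(\mathcal D_{N_3})$ with $N_1-N_2^*N_3=D_{N_3}F_1D_{N_3}$ and $N_2-N_1^*N_3=D_{N_3}F_2D_{N_3}$. Since $N_3$ is unitary, $D_{N_3}=(I-N_3^*N_3)^{1/2}=0$, so both right-hand sides vanish, yielding immediately $N_1=N_2^*N_3$ and $N_2=N_1^*N_3$. The condition that $N_2$ is a contraction should follow from the description of $\overline{\mathbb E}$: for a point $(a,b,p)\in\overline{\mathbb E}$ one has constraints forcing $|b|\le 1$, and the operatorial analogue (using the polynomial von Neumann inequality of Lemma \ref{TBLem} applied to suitable coordinate-type functions, or directly from $N_2=N_1^*N_3$ together with $\|N_1\|\le 1$) gives $\|N_2\|\le 1$. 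I expect this contractivity of $N_2$ to be the main obstacle, since it does not fall out purely algebraically and requires extracting a sharp norm bound from membership in $\overline{\mathbb E}$.

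For $(3)\Rightarrow(1)$, I would start from $N_3$ unitary, $N_2$ a contraction, and $N_1=N_2^*N_3$. First I must verify that $N_1$ and $N_2$ are in fact normal and commute with each other and with $N_3$; since the hypotheses of the theorem already posit that $\underline N$ is a commuting triple, commutativity is given, and normality of $N_1,N_2$ should be deduced from the relation $N_1=N_2^*N_3$ combined with the commutation relations and unitarity of $N_3$ (a short computation comparing $N_1N_1^*$ and $N_1^*N_1$ using $N_3N_3^*=N_3^*N_3=I$). The remaining task is to exhibit $\sigma_T(\underline N)\subseteq b\mathbb E$, or equivalently to realize $\underline N$ as a symmetrization/parametrization of commuting unitaries lying over the unit circle; here the relation $N_1=N_2^*N_3$ is precisely the boundary relation $a=\bar b\,p$ characterizing $b\mathbb E$, and combined with $|p|=1$ and $\|N_2\|\le 1$ it places the joint spectrum on $b\mathbb E$ by the spectral mapping theorem for the commuting normal family.

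Throughout, the guiding principle is that the unitarity of $N_3$ collapses the fundamental-operator machinery (via $D_{N_3}=0$) to the clean algebraic identity $N_1=N_2^*N_3$, exactly as $P$ unitary forces $S_i=S_{n-i}^*P$ in Theorem \ref{gammauni}. I would keep the argument parallel to that theorem's proof to reuse as much structural reasoning as possible, and I anticipate that the only genuinely analytic input needed is the sharp bound $\|N_2\|\le 1$ extracted from the geometry of $\overline{\mathbb E}$.
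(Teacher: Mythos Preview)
The paper does not give its own proof of this theorem: Theorem~\ref{thm:tu} is quoted verbatim from \cite{T:B} (Theorem~5.4 there) and is stated without proof, as a piece of background material in Section~\ref{background}. Consequently there is no in-paper argument to compare your proposal against.

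That said, your outline is sound and close in spirit to the original proof in \cite{T:B}. A couple of small remarks. For $(2)\Rightarrow(3)$, the contractivity of $N_2$ does fall out purely from the spectral-set hypothesis: applying Lemma~\ref{TBLem} to the coordinate polynomial $f(z_1,z_2,z_3)=z_2$ gives $\|N_2\|\le\sup_{\overline{\mathbb E}}|z_2|\le 1$ directly, so the ``main obstacle'' you flag is in fact a one-line consequence of the definition of an $\mathbb E$-contraction; you do not need to pass through $\|N_1\|\le 1$ first. For $(3)\Rightarrow(1)$, the only point requiring care is deducing normality of $N_1$ and $N_2$ from commutativity, $N_3$ unitary, and $N_1=N_2^*N_3$; your sketch correctly identifies this, and the computation goes through (commutativity of $N_1$ with $N_3$ forces $N_2^*$ to commute with $N_3$, then commutativity of $N_1$ with $N_2$ gives $N_2^*N_2=N_2N_2^*$).
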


\begin{thm}[{\cite{T:B}, Theorem 5.7}] \label{thm:ti}
	
	Let $\underline V = (V_1, V_2, V_3)$ be a commuting triple of
	bounded operators. Then the following are equivalent.
	
	\begin{enumerate}
		
		\item $\underline V$ is an $\mathbb E$-isometry.
		
		\item $V_3$ is an isometry and $\underline V$ is an $\mathbb
		E$-contraction.
		
		\item $V_3$ is an isometry, $V_2$ is a contraction and $V_1=V_2^*
		V_3$.
		
	\end{enumerate}
\end{thm}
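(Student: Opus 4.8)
My plan is to prove the three assertions equivalent by running the cycle $(1)\Rightarrow(2)\Rightarrow(3)\Rightarrow(1)$, in exact parallel with the $\Gamma_n$-isometry characterization of Theorem \ref{gammaiso}, and to lean throughout on the description of $\mathbb E$-unitaries already recorded in Theorem \ref{thm:tu} together with the polynomial von Neumann inequality of Lemma \ref{TBLem}. The guiding principle is that an $\mathbb E$-isometry is, by definition, the restriction of an $\mathbb E$-unitary to a joint invariant subspace, so the isometry/contraction/algebraic data of $(3)$ should be read off from the unitary data of Theorem \ref{thm:tu} through this restriction, and conversely reconstructed by building a unitary extension.

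For $(1)\Rightarrow(2)$, write $\underline V=(Q_1|_{\mathcal H},Q_2|_{\mathcal H},U|_{\mathcal H})$ for an $\mathbb E$-unitary $(Q_1,Q_2,U)$ on $\mathcal K\supseteq\mathcal H$ with $\mathcal H$ jointly invariant. An $\mathbb E$-unitary is in particular an $\mathbb E$-contraction (Theorem \ref{thm:tu}), and the $\mathbb E$-contraction property passes to joint invariant subspaces: for every polynomial $f$ one has $f(\underline V)=f(Q_1,Q_2,U)|_{\mathcal H}$, whence $\|f(\underline V)\|\le\|f(Q_1,Q_2,U)\|\le\|f\|_{\infty,\overline{\mathbb E}}$, and Lemma \ref{TBLem} gives that $\underline V$ is an $\mathbb E$-contraction. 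Since $U$ is unitary and $\mathcal H$ is $U$-invariant, $V_3=U|_{\mathcal H}$ is an isometry, which is $(2)$. For the first half of $(2)\Rightarrow(3)$, applying Lemma \ref{TBLem} to the coordinate polynomial $x_2$, whose modulus is at most $1$ on $\overline{\mathbb E}$, yields $\|V_2\|\le\|x_2\|_{\infty,\overline{\mathbb E}}\le 1$, so $V_2$ is a contraction.

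The remaining identity $V_1=V_2^*V_3$ in $(2)\Rightarrow(3)$ is where the isometry hypothesis does its work. I would invoke the fundamental operator pair of the $\mathbb E$-contraction $\underline V$ (recalled in the Introduction): there is $F_1\in\mathcal B(\mathcal D_{V_3})$ with $V_1-V_2^*V_3=D_{V_3}F_1D_{V_3}$, where $D_{V_3}=(I-V_3^*V_3)^{1/2}$. Because $V_3$ is an isometry, $D_{V_3}=0$, so the right-hand side vanishes and $V_1=V_2^*V_3$ follows at once. Should one wish to bypass the fundamental-operator machinery, the same identity can be extracted directly from the von Neumann inequality of Lemma \ref{TBLem} by testing the rational functions of $\overline{\mathbb E}$ that encode the boundary relation $x_1=\bar x_2x_3$ valid on $b\mathbb E$, cf.\ Theorem \ref{thm:tu}(3).

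The main obstacle is $(3)\Rightarrow(1)$, where one must actually manufacture an $\mathbb E$-unitary extension. Starting from $V_3$ isometric, $V_2$ a commuting contraction and $V_1=V_2^*V_3$, I would take the minimal unitary extension $U$ of $V_3$ on a space $\mathcal K\supseteq\mathcal H$, so that $\mathcal H$ is $U$-invariant with $U|_{\mathcal H}=V_3$, and then seek a normal contraction $Q_2$ on $\mathcal K$ commuting with $U$, leaving $\mathcal H$ invariant, with $Q_2|_{\mathcal H}=V_2$; setting $Q_1=Q_2^*U$, Theorem \ref{thm:tu}(3) would certify that $(Q_1,Q_2,U)$ is an $\mathbb E$-unitary (the normality of $Q_2$ being exactly what makes the triple commute). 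The crux is that $Q_2$ must be chosen so that $\mathcal H$ is also invariant under $Q_1=Q_2^*U$ and $Q_1|_{\mathcal H}=V_1$; this is precisely the point where the commutativity of the triple and the relation $V_1=V_2^*V_3$ must be fed in, and I would carry it out through the Wold decomposition $V_3=S\oplus W$ into a pure shift part $S$ and a unitary part $W$, treating $W$ by Theorem \ref{thm:tu} directly and modelling the $S$-part on $H^2(\mathcal E)\subseteq L^2(\mathcal E)$ with $U$ the bilateral shift and $Q_2$ a multiplication operator. Verifying that this lift is simultaneously contractive, commuting, and compatible with the invariant-subspace structure — so that the restriction of $(Q_1,Q_2,U)$ to $\mathcal H$ returns exactly $(V_1,V_2,V_3)$ — is the delicate step, and is where I expect the genuine work to lie.
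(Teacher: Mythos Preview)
The paper does not supply a proof of this theorem; it is quoted from \cite{T:B}, Theorem~5.7, as background material in Section~\ref{background}, so there is no in-paper argument to compare against.

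On its own merits, your cycle $(1)\Rightarrow(2)\Rightarrow(3)$ is correct. The fundamental-operator trick for $V_1=V_2^*V_3$ is clean and not circular, since the $\mathcal F_O$-pair is constructed in \cite{T:B} prior to Theorem~5.7. Your plan for $(3)\Rightarrow(1)$ via the Wold decomposition of $V_3$ and extension to $L^2(\mathcal E)$ is sound and can be completed, but one step you leave implicit is genuinely needed and is where a careless reader might stumble: the Wold splitting $\mathcal H=\mathcal H_s\oplus\mathcal H_u$ of $V_3$ does not automatically \emph{reduce} $V_1,V_2$ from commutativity alone --- commutativity only yields that $\mathcal H_u=\bigcap_n V_3^n\mathcal H$ is invariant under $V_1,V_2$. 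You must use the hypothesis $V_1=V_2^*V_3$ together with its consequence $V_2=V_1^*V_3$ (obtained from $V_3^*V_3=I$ and $[V_2,V_3]=0$) to see that $V_1^*,V_2^*$ also preserve $\mathcal H_u$, so that $\mathcal H_u$ reduces the whole triple. Once that is in place, on the pure-shift part one has $V_3=M_z$, $V_2=M_\phi$, $V_1=M_\psi$ for some $\phi,\psi\in H^\infty(\mathcal B(\mathcal E))$, and the relation $V_1=V_2^*V_3$ forces $e^{it}\phi(e^{it})^*=\psi(e^{it})$ a.e.\ on $\mathbb T$; this is exactly what guarantees that $Q_1:=Q_2^*U=M_{e^{it}\phi^*}$ on $L^2(\mathcal E)$ leaves $H^2(\mathcal E)$ invariant with $Q_1|_{H^2}=V_1$. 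The triple $(M_{e^{it}\phi^*},M_\phi,M_{e^{it}})$ is then an $\mathbb E$-unitary by Theorem~\ref{thm:tu}(3), and your construction goes through.
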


\section{Necessary condition of conditional dilation for c.n.u $\Gamma_n$-contractions}\label{dilationforGamma}
	
	Let $(S_1, \dots, S_{n-1},P)$ be a $\Gamma_n$-contraction on $\mathcal{H}$ with $(A_1, \dots, A_{n-1})$ and $(B_1, \dots, B_{n-1})$ being the $\mathcal{F}_O$-tuples of $(S_1, \dots, S_{n-1},P)$ and $(S_1^*, \dots, S_{n-1}^*,P^*)$ respectively. Suppose
	\[
	\Sigma_1(z)=\left(\dfrac{n-1}{n}(A_1+zA_{n-1}^*), \dfrac{n-2}{n}(A_2+zA_{n-2}^*), \dots, \dfrac{1}{n}(A_{n-1}+zA_{1}^*)\right)
	\]
	and
	\[
	\Sigma_2(z)=\left(\dfrac{n-1}{n}(B_1^*+zB_{n-1}), \dfrac{n-2}{n}(B_2^*+zB_{n-2}), \dots, \dfrac{1}{n}(B_{n-1}^*+zB_{1})\right).
	\]
	The following theorem provides a conditional dilation of a c.n.u $\Gamma_n$-contraction.
	\begin{thm}[\cite{B:P6}, Corollary 3.6]\label{BisaiPalDilation}
		Let $(S_1, \dots, S_{n-1},P)$ be a c.n.u $\Gamma_n$-contraction on a Hilbert space $\mathcal{H}$ with $(A_1, \dots, A_{n-1})$ and $(B_1, \dots, B_{n-1})$ being the $\mathcal{F}_O$-tuples of $(S_1, \dots, S_{n-1},\\P)$ and $(S_1^*, \dots, S_{n-1}^*,P^*)$ respectively. Let $\Sigma_1(z)$ and $\Sigma_2(z)$ be $\Gamma_{n-1}$-contractions for all $z\in \mathbb D$. Then there is an isometry $\varphi_{BS}$ from $\mathcal{H}$ into $H^2(\mathcal{D}_{P^*})\oplus\overline{\Delta_P L^2(\mathcal{D}_P)}$ such that for $i=1, \dots, n-1$
		\[
		\varphi_{BS}S_i^*=\left(M_{G_i^*+zG_{n-i}}\oplus\widetilde{S}_{i2}\right)^*\varphi_{BS} \text{ and }\varphi_{BS}P^*=\left(M_z\oplus M_{e^{it}}\right)^*\varphi_{BS},
		\]
		where $(M_{G_1^*+zG_{n-1}}, \dots, M_{G_{n-1}^*+zG_1},M_z)$ on $H^2(\mathcal{D}_{P^*})$ is a pure $\Gamma_n$-isometry and $(\widetilde{S}_{12},\dots,\\ \widetilde{S}_{(n-1)2},M_{e^{it}})$ on $\overline{\Delta_P L^2(\mathcal{D}_P)}$ is a $\Gamma_n$-unitary.
	\end{thm}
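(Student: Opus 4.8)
The plan is to read the model off the two minimal isometric dilations of the c.n.u.\ contraction $P$ recorded in Section \ref{background}, and to transport the action of the $S_i$ through them using the fundamental operator identities. First I would combine the Douglas embedding $\varphi_1$ and the Sz.-Nagy--Foias embedding $\varphi_2$ with the connecting unitary $\mathfrak U$ of \eqref{eqnU} and set
\[
\varphi_{BS}=\begin{pmatrix} I_{H^2(\mathbb D)}\otimes V_1 & 0\\ 0 & I\end{pmatrix}\varphi_2=\begin{pmatrix} I & 0\\ 0 & V_2^*\end{pmatrix}\varphi_1,
\]
the two expressions agreeing by the block form \eqref{eqnU}. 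This is an isometry of $\mathcal H$ into $H^2(\mathcal D_{P^*})\oplus\overline{\Delta_P L^2(\mathcal D_P)}$, and the relation $\varphi_{BS}P^*=(M_z\oplus M_{e^{it}})^*\varphi_{BS}$ is immediate: the first coordinate is handled by \eqref{eqn8} together with the fact that $V_1$ commutes with $M_z^*$ fibrewise, while on the second coordinate \eqref{eqn10} gives $V_2 M_{e^{it}}V_2^*=U^*$, so that conjugating the Douglas relation \eqref{eqn8} by $V_2^*$ produces $M_{e^{it}}^*$.

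Next I would identify the two halves of the target. On the pure part $H^2(\mathcal D_{P^*})$, writing $(B_1,\dots,B_{n-1})$ for the $\mathcal F_O$-tuple of $(S_1^*,\dots,S_{n-1}^*,P^*)$, set $G_i=V_1B_iV_1^*$. The intertwining on this coordinate reduces, after expanding the co-analytic Toeplitz operator $M_{G_i^*+zG_{n-i}}^*=G_i(\cdot)+G_{n-i}^*M_z^*$ against $\varphi_1 h=\sum_m z^mD_{P^*}P^{*m}h\oplus\mathcal A_*^{1/2}h$, to the single operator identity
\[
D_{P^*}S_i^*=B_iD_{P^*}+B_{n-i}^*D_{P^*}P^*,
\]
which is the linchpin of the computation and which I would establish as a preliminary lemma from the defining equation $S_i^*-S_{n-i}P^*=D_{P^*}B_iD_{P^*}$ together with the commutation $D_{P^*}P=PD_P$. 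On the residual part $\overline{\Delta_P L^2(\mathcal D_P)}$ the relevant data are instead the fundamental operators $A_i$ of $(S_1,\dots,S_{n-1},P)$ themselves: I would take $\widetilde S_{i2}$ to be (the transport by $V_2^*$ of) multiplication by $A_i+e^{it}A_{n-i}^*$, and verify $\varphi_{BS}S_i^*=(M_{G_i^*+zG_{n-i}}\oplus\widetilde S_{i2})^*\varphi_{BS}$ on the second coordinate by matching $\mathcal A_*^{1/2}S_i^*$ against $\widetilde S_{i2}^*$.

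It then remains to verify the structural claims, which is where the hypotheses enter. For the pure part I would apply Theorem \ref{gammaiso}: $M_z$ is a pure isometry, the relation $M_{G_i^*+zG_{n-i}}=M_{G_{n-i}^*+zG_i}^*M_z$ holds by a direct Toeplitz calculation using $M_z^*M_z=I$, and the tuple $\big(\tfrac{n-1}{n}M_{G_1^*+zG_{n-1}},\dots,\tfrac{1}{n}M_{G_{n-1}^*+zG_1}\big)$ is a $\Gamma_{n-1}$-contraction because a multiplication operator tuple with continuous analytic symbol is a $\Gamma_{n-1}$-contraction as soon as its symbol is a $\Gamma_{n-1}$-contraction at each point of $\overline{\mathbb D}$; conjugating $\Sigma_2(z)$ by the unitary $V_1$ (which preserves $\Gamma_{n-1}$-contractivity) is exactly this symbol, so the hypothesis on $\Sigma_2$ supplies it. For the residual part I would argue symmetrically with Theorem \ref{gammauni}, the unitarity of $M_{e^{it}}$ and the relation $\widetilde S_{i2}=\widetilde S_{(n-i)2}^*M_{e^{it}}$ being automatic, and the hypothesis that $\Sigma_1(z)$ is a $\Gamma_{n-1}$-contraction furnishing the $\Gamma_{n-1}$-contractivity of the compressed tuple $\big(\tfrac{n-1}{n}\widetilde S_{12},\dots,\tfrac{1}{n}\widetilde S_{(n-1)2}\big)$.

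The step I expect to be the main obstacle is the residual coordinate. On the pure part the Toeplitz structure makes the symmetry $T_i=T_{n-i}^*M_z$ automatic and the intertwining collapses to the single fundamental identity above; but on $\overline{\Delta_P L^2(\mathcal D_P)}$ one must extract the operators $\widetilde S_{i2}$ from the residual (unitary) part of $P$'s dilation, show that the data $A_i$ transported through $\mathcal A_*^{1/2}$ and $V_2$ genuinely assemble into a commuting \emph{normal} tuple with Taylor spectrum in $b\Gamma_n$, and check that it dilates the $S_i$ there. Keeping the Douglas and Sz.-Nagy--Foias models aligned through $\mathfrak U$ while simultaneously tracking the $A_i$ on one side and the $B_i$ on the other is the delicate bookkeeping on which the argument turns.
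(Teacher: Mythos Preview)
The paper does not actually prove this statement: it is imported verbatim from \cite{B:P6} (Corollary~3.6) and used as a black box, so there is no in-paper argument to compare your sketch against. What follows is an assessment of your outline on its own terms.

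On the $H^2(\mathcal D_{P^*})$ coordinate your plan is sound. The identity $D_{P^*}S_i^*=B_iD_{P^*}+B_{n-i}^*D_{P^*}P^*$ is exactly Theorem~\ref{BisaiPal1} applied to the adjoint $\Gamma_n$-contraction $(S_1^*,\dots,S_{n-1}^*,P^*)$, and with $G_i=V_1B_iV_1^*$ both the intertwining and the pure $\Gamma_n$-isometry property follow from the hypothesis on $\Sigma_2$ via Theorem~\ref{gammaiso}, as you indicate.

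The residual coordinate, which you rightly flag as the obstacle, is where the sketch has a genuine gap. Your candidate $\widetilde S_{i2}$ is built from the multiplier $A_i+e^{it}A_{n-i}^*$, an operator on $L^2(\mathcal D_P)$; but the second component of $\varphi_{BS}h$ in your own construction is $V_2^*\mathcal A_*^{1/2}h$, and $\mathcal A_*^{1/2}$ maps $\mathcal H$ into $\overline{\textit{Ran}}(\mathcal A_*)\subset\mathcal H$, not into $L^2(\mathcal D_P)$. So ``matching $\mathcal A_*^{1/2}S_i^*$ against $\widetilde S_{i2}^*$'' is not a comparison of operators acting on the same space, and you have supplied no bridge between them. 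Concretely you would need an identity of the shape $\mathcal A_*^{1/2}S_i^*=R_i^*\,\mathcal A_*^{1/2}$ for some bounded $R_i$ on $\overline{\textit{Ran}}(\mathcal A_*)$, together with a reason why $(V_2^*R_1V_2,\dots,V_2^*R_{n-1}V_2,M_{e^{it}})$ is a $\Gamma_n$-unitary; neither follows formally from the $\mathcal F_O$-equations for the $A_i$, which involve $D_P$ rather than $\mathcal A_*^{1/2}$. Even granting your candidate, you have not verified that $\overline{\Delta_PL^2(\mathcal D_P)}$ is invariant under $M_{A_i+e^{it}A_{n-i}^*}$; without invariance the restrictions need not commute or satisfy $\widetilde S_{i2}=\widetilde S_{(n-i)2}^*M_{e^{it}}$, and the appeal to Theorem~\ref{gammauni} collapses. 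The proof in \cite{B:P6} proceeds instead through a Sch\"affer-type dilation on $\mathcal H\oplus\ell^2(\mathcal D_P)$ followed by a Wold decomposition, which sidesteps exactly this difficulty; reproducing the conclusion directly on the Sz.-Nagy--Foias space, as you propose, would require substantially more than what is outlined here.
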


	The following result from \cite{S:B} will be useful.
	\begin{lem}[\cite{S:B}, Theorem 4.10]\label{lem1}
		If $(T_1, \dots, T_{n-1},M_z)$ on $H^2(\mathcal{E})$ is a $\Gamma_n$-isometry, then for each $i=1, \dots, n-1$  $$T_i=M_{Y_i+zY_{n-i}^*},$$ for all $z\in \mathbb D$ and for some $Y_i\in\mathcal{B}(\mathcal{E})$.
	\end{lem}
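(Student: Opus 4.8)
The plan is to combine the algebraic characterization of a $\Gamma_n$-isometry in Theorem~\ref{gammaiso} with the classical description of the commutant of the unilateral shift on $H^2(\mathcal{E})$. Since $M_z$ is an isometry and $(T_1,\dots,T_{n-1},M_z)$ is a $\Gamma_n$-isometry, part~(iii) of Theorem~\ref{gammaiso} supplies the relations
\[
T_i=T_{n-i}^*M_z,\qquad i=1,\dots,n-1,
\]
and, being the components of a commuting tuple, each $T_i$ commutes with $M_z$. These two facts---commutation with $M_z$ and the adjoint relation---are the whole engine of the argument.

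From commutation with the shift I would invoke the standard fact that the commutant of $M_z$ on $H^2(\mathcal{E})$ consists exactly of the analytic Toeplitz operators; hence there is a bounded analytic symbol $\Phi_i\in H^\infty(\mathcal{B}(\mathcal{E}))$ with $T_i=M_{\Phi_i}$ and $\|\Phi_i\|_\infty=\|T_i\|$. Writing $\Phi_i(z)=\sum_{k\geq 0}z^kY_{i,k}$ with $Y_{i,k}\in\mathcal{B}(\mathcal{E})$, the task reduces to showing $Y_{i,k}=0$ for $k\geq 2$ and $Y_{i,1}=Y_{n-i,0}^*$; then setting $Y_i:=Y_{i,0}$ yields $\Phi_i(z)=Y_i+zY_{n-i}^*$, which is the assertion.

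To extract these identities I would pass to the block matrix representation relative to $H^2(\mathcal{E})=\bigoplus_{k\geq0}z^k\mathcal{E}$ and compare the two sides of $M_{\Phi_i}=M_{\Phi_{n-i}}^*M_z$ entrywise. The left-hand side is block lower triangular with $(k,l)$ entry $Y_{i,k-l}$ for $k\geq l$, whereas the right-hand side, obtained by composing the (block upper triangular) $M_{\Phi_{n-i}}^*$ with the shift, has $(k,l)$ entry $Y_{n-i,\,l+1-k}^*$ for $l\geq k-1$. Equating the strictly lower entries (those with $l<k-1$) forces $Y_{i,s}=0$ for every $s\geq 2$; the subdiagonal entries ($l=k-1$) give $Y_{i,1}=Y_{n-i,0}^*$; the diagonal and upper entries then reduce to the same identity with $i$ replaced by $n-i$ and so carry no new information.

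The computation is short, so the only real obstacle is the bookkeeping: one must keep the block-triangular structures of $M_{\Phi_i}$, $M_{\Phi_{n-i}}^*$, and $M_z$ straight and track precisely which index regime ($l<k-1$, $l=k-1$, $l=k$, $l>k$) produces the vanishing of the higher coefficients as opposed to a mere consistency check. Everything else---boundedness of the $Y_{i,k}$, the identification of the commutant, and the final substitution---is routine.
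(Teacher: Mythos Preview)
Your argument is correct. The paper does not actually prove this lemma; it quotes it as \cite[Theorem~4.10]{S:B} and uses it as a black box. So there is no in-paper proof to compare against.

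Your route---identify each $T_i$ as an analytic Toeplitz operator via the commutant of $M_z$, then read off the coefficient constraints from the relation $T_i=T_{n-i}^*M_z$ supplied by Theorem~\ref{gammaiso}(iii)---is exactly the natural one, and your block-matrix bookkeeping is accurate: the strictly lower-triangular entries ($l<k-1$) kill $Y_{i,s}$ for $s\ge2$, the subdiagonal gives $Y_{i,1}=Y_{n-i,0}^*$, and the remaining entries are redundant with $i\leftrightarrow n-i$. This is essentially how the result is established in \cite{S:B} as well, so nothing is being missed.
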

	In order to prove the main result (Theorem \ref{bisai}) of this section, we need the following lemma.
	\begin{lem}\label{lemmain}
		Let $P$ be a c.n.u contraction on $\mathcal{H}$. Suppose $(M_{X_1^*+zX_{n-1}}, \dots, M_{X_{n-1}^*+zX_1},M_z)$ on $H^2(\mathcal{D}_{P^*})$ is a $\Gamma_n$-isometry and $(R_1, \dots, R_{n-1},M_{e^{it}})$ is a $\Gamma_n$-unitary on $\overline{\Delta_P L^2(\mathcal{D}_P)}$. If for each $i=1, \dots, n-1,$
		\begin{equation}\label{eqn1}
		\begin{pmatrix}
		M_{X_i^*+zX_{n-i}} & 0\\
		0 & R_i
		\end{pmatrix}\mathcal{Q}_P \subseteq \mathcal{Q}_P,
		\end{equation}
		then there exists a $\Gamma_n$-isometry $(M_{Y_1+zY_{n-1}^*}, \dots, M_{Y_{n-1}+zY_1^*},M_z)$ on $H^2(\mathcal{D}_P)$ such that for each $i=1, \dots, n-1,$	  
		\[
		\begin{pmatrix}
		M_{X_i^*+zX_{n-i}} & 0\\
		0 & R_i
		\end{pmatrix}\begin{pmatrix}
		M_{\Theta_P} \\
		\Delta_P
		\end{pmatrix}=\begin{pmatrix}
		M_{\Theta_P} \\
		\Delta_P
		\end{pmatrix}M_{{Y_i+zY_{n-i}^*}}.
		\]
		\begin{proof}
			From Equation \eqref{eqn1}, we can define operator $T_i$ on $H^2(\mathcal{D}_P)$ by the following way: 
			\begin{equation}\label{eqn2}
			\begin{pmatrix}
			M_{X_i^*+zX_{n-i}} & 0\\
			0 & R_i
			\end{pmatrix}\begin{pmatrix}
			M_{\Theta_P}\\
			\Delta_P
			\end{pmatrix}=\begin{pmatrix}
			M_{\Theta_P}\\
			\Delta_P
			\end{pmatrix}T_i
			\end{equation}
			for each $i$.
			Since $\begin{pmatrix}
			M_{\Theta_P}\\
			\Delta_P
			\end{pmatrix}$ is an isometry on $H^2(\mathcal{D}_P)$, 
			\begin{equation}\label{eqn3}
			T_i=\begin{pmatrix}
			M_{\Theta_P}\\
			\Delta_P
			\end{pmatrix}^*\begin{pmatrix}
			M_{X_i^*+zX_{n-i}} & 0\\
			0 & R_i
			\end{pmatrix}\begin{pmatrix}
			M_{\Theta_P}\\
			\Delta_P
			\end{pmatrix}.
			\end{equation}
			\textbf{Claim.} $(T_1, \dots, T_{n-1},M_z)$ on $H^2(\mathcal{D}_P)$ is a $\Gamma_n$-isometry. \\
			
			\noindent\textit{Proof of Claim.} Using Equation \eqref{eqn3} and taking cue from the facts that $\mathcal{Q}_P$ is invariant under $\begin{pmatrix}
			M_{X_i^*+zX_{n-i}} & 0\\
			0 & R_i
			\end{pmatrix}$ for each $i$ and that $\begin{pmatrix}
			M_{\Theta_P}\\
			\Delta_P
			\end{pmatrix}$ is an isometry we have
			\begingroup
			\allowdisplaybreaks
			\begin{align*}
			&T_iT_j\\  =& \begin{pmatrix}
			M_{\Theta_P}\\
			\Delta_P
			\end{pmatrix}^*\begin{pmatrix}
			M_{X_i^*+zX_{n-i}} & 0\\
			0 & R_i
			\end{pmatrix}\begin{pmatrix}
			M_{\Theta_P}\\
			\Delta_P
			\end{pmatrix}\begin{pmatrix}
			M_{\Theta_P}\\
			\Delta_P
			\end{pmatrix}^*\begin{pmatrix}
			M_{X_j^*+zX_{n-j}} & 0\\
			0 & R_j
			\end{pmatrix}\begin{pmatrix}
			M_{\Theta_P}\\
			\Delta_P
			\end{pmatrix}\\
			= & \begin{pmatrix}
			M_{\Theta_P}\\
			\Delta_P
			\end{pmatrix}^*\begin{pmatrix}
			M_{X_i^*+zX_{n-i}} & 0\\
			0 & R_i
			\end{pmatrix}\begin{pmatrix}
			M_{X_j^*+zX_{n-j}} & 0\\
			0 & R_j
			\end{pmatrix}\begin{pmatrix}
			M_{\Theta_P}\\
			\Delta_P
			\end{pmatrix}\\
			= & \begin{pmatrix}
			M_{\Theta_P}\\
			\Delta_P
			\end{pmatrix}^*\begin{pmatrix}
			M_{X_j^*+zX_{n-j}} & 0\\
			0 & R_j
			\end{pmatrix}\begin{pmatrix}
			M_{X_i^*+zX_{n-i}} & 0\\
			0 & R_i
			\end{pmatrix}\begin{pmatrix}
			M_{\Theta_P}\\
			\Delta_P
			\end{pmatrix}\\
			(&\text{since } M_{X_i^*+zX_{n-i}}M_{X_j^*+zX_{n-j}}=M_{X_j^*+zX_{n-j}}M_{X_i^*+zX_{n-i}}\text{ and }R_iR_j=R_jR_i)\\
			=&\begin{pmatrix}
			M_{\Theta_P}\\
			\Delta_P
			\end{pmatrix}^*\begin{pmatrix}
			M_{X_j^*+zX_{n-j}} & 0\\
			0 & R_j
			\end{pmatrix}\begin{pmatrix}
			M_{\Theta_P}\\
			\Delta_P
			\end{pmatrix}\begin{pmatrix}
			M_{\Theta_P}\\
			\Delta_P
			\end{pmatrix}^*\begin{pmatrix}
			M_{X_i^*+zX_{n-i}} & 0\\
			0 & R_i
			\end{pmatrix}\begin{pmatrix}
			M_{\Theta_P}\\
			\Delta_P
			\end{pmatrix}\\
			=&T_jT_i.
			\end{align*}
			\endgroup
			Again from Equation \eqref{eqn2}, we have 
			\begingroup
			\allowdisplaybreaks
			\begin{align}
			M_{X_i^*+zX_{n-i}}M_{\Theta_P}&=M_{\Theta_P}T_i,\, \label{eqn4}\\
			R_i\Delta_P&=\Delta_PT_i. \label{eqn5}
			\end{align}
			\endgroup
			Now applying $M_z$ and $M_{e^{it}}|_{\overline{\Delta_P L^2(\mathcal{D}_P)}}$ on both sides of Equations \eqref{eqn4} and \eqref{eqn5} respectively, we get
			\begingroup
			\allowdisplaybreaks
			\begin{align*}
			M_{\Theta_P}M_zT_i&=M_{\Theta_P}T_iM_z\\
			\Delta_PM_zT_i&=\Delta_PT_iM_z,
			\end{align*}
			\endgroup
			that is, $\begin{pmatrix}
			M_{\Theta_P}\\
			\Delta_P
			\end{pmatrix}M_zT_i=\begin{pmatrix}
			M_{\Theta_P}\\
			\Delta_P
			\end{pmatrix}T_iM_z \text{ and consequently } M_zT_i=T_iM_z$. Therefore, $(T_1,\\ \dots, T_{n-1},M_z)$ on $H^2(\mathcal{D}_P)$ is a commuting tuple. Clearly, the tuple $$\underline{\textbf{M}}=\left(\begin{pmatrix}
			M_{X_1^*+zX_{n-1}} & 0\\
			0 & R_1
			\end{pmatrix}, \dots, \begin{pmatrix}
			M_{X_{n-1}^*+zX_{1}} & 0\\
			0 & R_{n-1}
			\end{pmatrix}, \begin{pmatrix}
			M_{z} & 0\\
			0 & M_{e^{it}}
			\end{pmatrix}\right)$$ being a direct sum of two $\Gamma_{n}$-contractions, is a $\Gamma_{n}$-contraction. Now using Equation \eqref{eqn2} and the fact that $\begin{pmatrix}
			M_{\Theta_P}\\
			\Delta_P
			\end{pmatrix}$ is an isometry, for any holomorphic polynomial $f$ in $n$-variables we have
			$$
			\|f(T_1, \dots, T_{n-1},M_z)\|
			=\left\|\begin{pmatrix}
			M_{\Theta_P}\\
			\Delta_P
			\end{pmatrix}^*f(\underline{\textbf{M}})\begin{pmatrix}
			M_{\Theta_P}\\
			\Delta_P
			\end{pmatrix}\right\|\leq\|f\|_{\infty,\Gamma_{n}}.
			$$
			Therefore, by Lemma \ref{SPLem}, $(T_1, \dots, T_{n-1},M_z)$ on $H^2(\mathcal{D}_P)$ is a $\Gamma_n$-contraction. Since $M_z$ is isometry and hence by Theorem \ref{gammaiso}, $(T_1, \dots, T_{n-1},M_z)$ is a $\Gamma_{n}$-isometry. This completes the proof of the claim.\\
			
			\noindent By Lemma \ref{lem1}, $T_i=M_{Y_i+zY_{n-i}^*}$ for some $Y_i\in \mathcal{B}(\mathcal{D}_P)$. This completes the proof.
		\end{proof}
	\end{lem}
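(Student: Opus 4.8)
The plan is to transport the entire problem from the large space $\mathbf{K}=H^2(\mathcal{D}_P)\oplus\overline{\Delta_P L^2(\mathcal{D}_P)}$ down to $H^2(\mathcal{D}_P)$ through the column isometry $V:=\begin{pmatrix} M_{\Theta_P} \\ \Delta_P \end{pmatrix}$, whose range is exactly $\mathcal{Q}_P$ and which satisfies $V^*V=I$. Writing $\mathbf{M}_i$ for the $i$-th block operator $\begin{pmatrix} M_{X_i^*+zX_{n-i}} & 0 \\ 0 & R_i \end{pmatrix}$, hypothesis \eqref{eqn1} says precisely that each $\mathbf{M}_i$ maps $\operatorname{Ran}V=\mathcal{Q}_P$ into itself. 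Because $V$ is an isometry, I would define $T_i:=V^*\mathbf{M}_iV$ on $H^2(\mathcal{D}_P)$; invariance gives $VV^*(\mathbf{M}_iV)=\mathbf{M}_iV$, so that $VT_i=\mathbf{M}_iV$, which is exactly the asserted intertwining identity and determines $T_i$ uniquely since $V$ is injective.

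The heart of the argument is the observation that, because $\mathcal{Q}_P=\operatorname{Ran}V$ is invariant under every $\mathbf{M}_i$ and under the last block $\begin{pmatrix} M_z & 0 \\ 0 & M_{e^{it}} \end{pmatrix}$, the compression $X\mapsto V^*XV$ is multiplicative on the algebra these operators generate. Concretely $T_iT_j=V^*\mathbf{M}_i(VV^*)\mathbf{M}_jV=V^*\mathbf{M}_i\mathbf{M}_jV$, the intervening projection $VV^*$ being absorbed since $\mathbf{M}_jV$ already lands in $\mathcal{Q}_P$. As $\underline{\mathbf{M}}=(\mathbf{M}_1,\dots,\mathbf{M}_{n-1},\begin{pmatrix} M_z & 0 \\ 0 & M_{e^{it}} \end{pmatrix})$ is a direct sum of a $\Gamma_n$-isometry and a $\Gamma_n$-unitary, its entries commute, and the same cancellation then yields $T_iT_j=T_jT_i$. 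For the commutation of each $T_i$ with $M_z$ I would instead read off the two component identities $M_{X_i^*+zX_{n-i}}M_{\Theta_P}=M_{\Theta_P}T_i$ and $R_i\Delta_P=\Delta_PT_i$ from $VT_i=\mathbf{M}_iV$, push $M_z$ through using $M_zM_{\Theta_P}=M_{\Theta_P}M_z$ and $M_{e^{it}}\Delta_P=\Delta_PM_z$, and cancel the injective $V$.

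Multiplicativity also upgrades the norm estimate: for every holomorphic polynomial $f$ in $n$ variables one obtains $f(T_1,\dots,T_{n-1},M_z)=V^*f(\underline{\mathbf{M}})V$, whence $\|f(T_1,\dots,T_{n-1},M_z)\|\le\|f(\underline{\mathbf{M}})\|\le\|f\|_{\infty,\Gamma_n}$, the last inequality holding because $\underline{\mathbf{M}}$, being a direct sum of $\Gamma_n$-contractions, is itself a $\Gamma_n$-contraction. By Lemma \ref{SPLem} the tuple $(T_1,\dots,T_{n-1},M_z)$ is therefore a $\Gamma_n$-contraction, and since $M_z$ is an isometry, Theorem \ref{gammaiso} promotes it to a $\Gamma_n$-isometry; Lemma \ref{lem1} then produces operators $Y_i\in\mathcal{B}(\mathcal{D}_P)$ with $T_i=M_{Y_i+zY_{n-i}^*}$, which is the desired conclusion. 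The step I expect to be the main obstacle is justifying the identity $f(\underline{T})=V^*f(\underline{\mathbf{M}})V$ cleanly, since it is precisely here that the invariance hypothesis \eqref{eqn1} must be invoked to discard the intervening projections $VV^*$ and make the compression multiplicative; once that is secured, the $\Gamma_n$-isometry structure and Lemma \ref{lem1} finish the proof almost formally.
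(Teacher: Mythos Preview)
Your proposal is correct and follows essentially the same route as the paper's proof: define $T_i=V^*\mathbf{M}_iV$, use invariance of $\mathcal{Q}_P$ to turn the compression into a genuine intertwining $VT_i=\mathbf{M}_iV$, deduce commutativity and the polynomial estimate via the ``absorb the projection $VV^*$'' trick, and then invoke Lemma~\ref{SPLem}, Theorem~\ref{gammaiso} and Lemma~\ref{lem1} in that order. The only cosmetic difference is that you phrase the key step as ``compression is multiplicative on an invariant subspace,'' whereas the paper writes out the same cancellation explicitly.
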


	Combining the Lemma \ref{lemmain} and Theorem \ref{BisaiPalDilation} we have the following theorem which is the main result of this section.
	\begin{thm}\label{bisai}
		Let $(S_1, \dots, S_{n-1},P)$ be a c.n.u $\Gamma_n$-contraction on a Hilbert space $\mathcal{H}$ with $(A_1, \dots, A_{n-1})$ and $(B_1, \dots, B_{n-1})$ being the $\ft$-tuples of $(S_1, \dots, S_{n-1},P)$ and $(S_1^*,\dots,\\ S_{n-1}^*,P^*)$ respectively. Let $\Sigma_1(z)$ and $\Sigma_2(z)$ be $\Gamma_{n-1}$-contractions for all $z\in \mathbb D$. Then for each $i=1, \dots, n-1$, there exist $Y_i\in \mathcal{B}(\mathcal{D}_P)$ such that $(M_{Y_1+zY_{n-1}^*},\dots, M_{Y_{n-1}+zY_1^*},M_z)$ on $H^2(\mathcal{D}_P)$ is a $\Gamma_n$-isometry and a unitary $\widetilde{V}_2\in \mathcal{B}(\overline{\Delta_P L^2(\mathcal{D}_P)})$ such that
		\[
		\begin{pmatrix}
		M_{\widetilde{G}_i^*+z\widetilde{G}_{n-i}} & 0\\
		0 & \widetilde{V}_2^*\widetilde{S}_{i2}\widetilde{V}_2
		\end{pmatrix}\begin{pmatrix}
		M_{\Theta_P}\\
		\Delta_P
		\end{pmatrix}=\begin{pmatrix}
		M_{\Theta_P}\\
		\Delta_P
		\end{pmatrix}M_{Y_i+zY_{n-i}^*},
		\]
		where the tuple $(\widetilde{G}_1, \dots, \widetilde{G}_{n-1})$ is unitarily equivalent to $(B_1,\dots, B_{n-1})$.
	\end{thm}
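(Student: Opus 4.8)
The plan is to run the output of Theorem~\ref{BisaiPalDilation} through Lemma~\ref{lemmain}; the only genuine work is to manufacture the invariance hypothesis~\eqref{eqn1} in the correct functional model. First I would apply Theorem~\ref{BisaiPalDilation}: since $\Sigma_1(z)$ and $\Sigma_2(z)$ are $\Gamma_{n-1}$-contractions for every $z\in\D$, there is an isometry $\varphi_{BS}\colon\HS\to H^2(\mathcal{D}_{P^*})\oplus\overline{\Delta_P L^2(\mathcal{D}_P)}=\textbf{K}$ with $\varphi_{BS}S_i^*=\mathbf{D}_i^*\varphi_{BS}$ and $\varphi_{BS}P^*=(M_z\oplus M_{e^{it}})^*\varphi_{BS}$, where $\mathbf{D}_i:=M_{G_i^*+zG_{n-i}}\oplus\widetilde{S}_{i2}$, the first summand being a pure $\Gamma_n$-isometry on $H^2(\mathcal{D}_{P^*})$ and the second a $\Gamma_n$-unitary on $\overline{\Delta_P L^2(\mathcal{D}_P)}$. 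Thus $M_z\oplus M_{e^{it}}$ on $\textbf{K}$ is an isometric dilation of $P$, carried by the \emph{same} operator on the \emph{same} space as the Sz.-Nagy--Foias model~\eqref{eqn9}, whose embedding $\varphi_2$ has range $\varphi_2\HS=\mathcal{H}_P=\textbf{K}\ominus\mathcal{Q}_P$.

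Next I would compare the two embeddings. Since $\varphi_{BS}$ and $\varphi_2$ both realise $P$ as compressions of the one operator $M_z\oplus M_{e^{it}}$ on $\textbf{K}$, the uniqueness of the minimal isometric dilation of the c.n.u.\ contraction $P$ (the same principle that yielded~\eqref{eqn10}) provides a unitary $W$ on $\textbf{K}$ with $W(M_z\oplus M_{e^{it}})=(M_z\oplus M_{e^{it}})W$ and $W\varphi_{BS}=\varphi_2$. Because $M_{e^{it}}$ is unitary, Lemma~\ref{BisaiPallem} forces $W$ to be block diagonal, $W=(I_{H^2(\D)}\otimes W_1)\oplus W_2$, with $W_1\in\mathcal{B}(\mathcal{D}_{P^*})$ commuting with $M_z$ and $W_2\in\mathcal{B}(\overline{\Delta_P L^2(\mathcal{D}_P)})$ commuting with $M_{e^{it}}$, exactly as in~\eqref{eqnU}. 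Conjugation by $W$ then gives $W\mathbf{D}_iW^*=M_{\widetilde{G}_i^*+z\widetilde{G}_{n-i}}\oplus(W_2\widetilde{S}_{i2}W_2^*)$ with $\widetilde{G}_i:=W_1G_iW_1^*$; putting $\widetilde{V}_2:=W_2^*$ rewrites the second summand as $\widetilde{V}_2^*\widetilde{S}_{i2}\widetilde{V}_2$. As conjugation by a unitary commuting with $M_z$ (respectively $M_{e^{it}}$) preserves the pure $\Gamma_n$-isometry (respectively $\Gamma_n$-unitary) property, the two summands satisfy the standing hypotheses of Lemma~\ref{lemmain}.

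The crux is the invariance~\eqref{eqn1}. The identity $\varphi_{BS}S_i^*=\mathbf{D}_i^*\varphi_{BS}$ shows that $\varphi_{BS}\HS$ is invariant under each $\mathbf{D}_i^*$, hence $(\varphi_{BS}\HS)^{\perp}$ is invariant under $\mathbf{D}_i$. Applying $W$ and using $W(\varphi_{BS}\HS)=\varphi_2\HS=\mathcal{H}_P$ with $\mathcal{Q}_P=\textbf{K}\ominus\mathcal{H}_P$, I obtain that $\mathcal{Q}_P=W\big((\varphi_{BS}\HS)^{\perp}\big)$ is invariant under each $W\mathbf{D}_iW^*$, which is precisely~\eqref{eqn1} for the pair $\big(M_{\widetilde{G}_i^*+z\widetilde{G}_{n-i}},\,\widetilde{V}_2^*\widetilde{S}_{i2}\widetilde{V}_2\big)$. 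Lemma~\ref{lemmain} then delivers the operators $Y_i\in\mathcal{B}(\mathcal{D}_P)$ making $(M_{Y_1+zY_{n-1}^*},\dots,M_{Y_{n-1}+zY_1^*},M_z)$ a $\Gamma_n$-isometry on $H^2(\mathcal{D}_P)$ together with the claimed intertwining of the block operators with $\left(\begin{smallmatrix}M_{\Theta_P}\\ \Delta_P\end{smallmatrix}\right)$. For the final clause, the $\ft$-tuple of the pure $\Gamma_n$-isometry $(M_{G_i^*+zG_{n-i}},M_z)$ is $(G_1,\dots,G_{n-1})$, and by the construction behind Theorem~\ref{BisaiPalDilation} this tuple is unitarily equivalent to the $\ft$-tuple $(B_1,\dots,B_{n-1})$ of $(S_1^*,\dots,S_{n-1}^*,P^*)$; since $\widetilde{G}_i=W_1G_iW_1^*$, composing the two unitaries gives $(\widetilde{G}_1,\dots,\widetilde{G}_{n-1})\cong(B_1,\dots,B_{n-1})$.

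I expect the main obstacle to lie in the comparison step: one must know that the dilation furnished by $\varphi_{BS}$ is minimal, so that the uniqueness theorem applies and produces a unitary $W$ on all of $\textbf{K}$ rather than on a proper reducing subspace, and one must know that this $W$ splits along the direct sum so that the shift summand and the unitary summand of each $\mathbf{D}_i$ are transported independently into the precise shape Lemma~\ref{lemmain} demands. Both are supplied by the uniqueness of the minimal isometric dilation together with Lemma~\ref{BisaiPallem}, but verifying the minimality of $\varphi_{BS}$ is the delicate point; the accompanying unitary-equivalence bookkeeping identifying $\widetilde{G}_i$ with $B_i$ up to conjugation is then routine.
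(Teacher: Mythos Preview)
Your strategy is exactly the paper's: invoke Theorem~\ref{BisaiPalDilation}, use uniqueness of the minimal isometric dilation of the c.n.u.\ contraction $P$ to obtain a block-diagonal unitary (via Lemma~\ref{BisaiPallem}) intertwining $\varphi_{BS}$ with the Sz.-Nagy--Foias embedding $\varphi_2$, deduce that $\mathcal{Q}_P$ is invariant under the conjugated block operators, and then apply Lemma~\ref{lemmain}. Your worry about minimality of the $\varphi_{BS}$-dilation is one the paper also passes over; both arguments simply invoke uniqueness.

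The one place where your argument is thinner than the paper's is the last clause, the unitary equivalence $(\widetilde{G}_1,\dots,\widetilde{G}_{n-1})\cong(B_1,\dots,B_{n-1})$. You appeal to ``the construction behind Theorem~\ref{BisaiPalDilation}'' to assert $(G_1,\dots,G_{n-1})\cong(B_1,\dots,B_{n-1})$, but that fact is not part of the statement of Theorem~\ref{BisaiPalDilation} as recorded here, and your aside that ``the $\ft$-tuple of the pure $\Gamma_n$-isometry $(M_{G_i^*+zG_{n-i}},M_z)$ is $(G_1,\dots,G_{n-1})$'' is not literally sensible, since for an isometry $D_P=0$. The paper makes this step self-contained: it passes to the Douglas model $\varphi_1$ via the unitary $\mathfrak{U}$ of~\eqref{eqnU}, computes directly
\[
S_i^*-S_{n-i}P^*=\varphi_1^*\bigl(P_{\C}\otimes V_1\widetilde{G}_iV_1^*\ \oplus\ 0\bigr)\varphi_1=D_{P^*}(V_1\widetilde{G}_iV_1^*)D_{P^*},
\]
and concludes $V_1\widetilde{G}_iV_1^*=B_i$ by uniqueness of the $\ft$-tuple. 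You should replace your appeal to the external construction with this explicit fundamental-equation computation.
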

	\begin{proof}
		By Theorem \ref{BisaiPalDilation}, for each $i=1, \dots, n-1$
		\begin{equation}\label{eqn6}
		\varphi_{BS}S_i^*=\begin{pmatrix}
		M_{G_i^*+zG_{n-i}} & 0\\
		0 & \widetilde{S}_{i2}
		\end{pmatrix}^*\varphi_{BS} 
		\end{equation}
		and
		\[
		\varphi_{BS}P^*=\begin{pmatrix}
		M_z & 0\\
		0 & M_{e^{it}}
		\end{pmatrix}^*\varphi_{BS}.
		\]
		Since $P$ is a c.n.u contraction and minimal isometric dilation of a contraction is unique, there exists a unitary $\Phi:H^2(\mathcal{D}_{P^*})\oplus \overline{\Delta_P L^2(\mathcal{D}_P)} \to H^2(\mathcal{D}_{P^*})\oplus \overline{\Delta_P L^2(\mathcal{D}_P)}$ such that $\Phi\varphi_2=\varphi_{BS}$ and $\Phi \begin{pmatrix}
		M_z & 0\\
		0 & M_{e^{it}}
		\end{pmatrix}^*=\begin{pmatrix}
		M_z & 0\\
		0 & M_{e^{it}}
		\end{pmatrix}^*\Phi$. Then by Lemma \ref{BisaiPallem}, $\Phi$ has the block matrix form $\Phi=\begin{pmatrix}
		I\otimes\widetilde{V}_1 & 0\\
		0 & \widetilde{V}_2
		\end{pmatrix}$, for some unitaries $\widetilde{V}_1\in\mathcal{B}(\mathcal{D}_{P^*})$, $\widetilde{V}_2\in \mathcal{B}(\overline{\Delta_P L^2(\mathcal{D}_P)})$. Taking cue from this fact and Equation \eqref{eqn6}, we have 
		\begingroup
		\allowdisplaybreaks
		\begin{align*}
		\varphi_2 S_i^* & = \begin{pmatrix}
		I\otimes \widetilde{V}_1^* & 0\\
		0 & \widetilde{V}_2^*
		\end{pmatrix}\begin{pmatrix}
		I\otimes G_i + M_z^* \otimes G_{n-i} & 0\\
		0 & \widetilde{S}_{i2}^*
		\end{pmatrix}\begin{pmatrix}
		I\otimes \widetilde{V}_1 & 0\\
		0 & \widetilde{V}_2
		\end{pmatrix}\varphi_2\\
		&=\begin{pmatrix}
		I \otimes \widetilde{V}_1^*G_i\widetilde{V}_1+M_z^*\otimes \widetilde{V}_1^*G_{n-i}\widetilde{V}_1 & 0\\
		0 & \widetilde{V}_2^*\widetilde{S}_{i2}^*\widetilde{V}_2
		\end{pmatrix}\varphi_2.
		\end{align*}
		\endgroup
		Set $\widetilde{G}_i=\widetilde{V}_1^*G_i\widetilde{V}_1$ and $S_{i2}=\widetilde{V}_2^*\widetilde{S}_{i2}\widetilde{V}_2$. Then 
		\begin{equation}\label{eqn7}
		\varphi_2 S_i^* = \begin{pmatrix}
		M_{\widetilde{G}_i^*+z\widetilde{G}_{n-i}} & 0\\
		0 & S_{i2}
		\end{pmatrix}^*\varphi_2.
		\end{equation}
		By Theorem \ref{BisaiPalDilation}, $(M_{G_1^*+zG_{n-1}}, \dots, M_{G_{n-1}^*+zG_1},M_z)$ on $H^2(\mathcal{D}_{P^*})$ is a pure $\Gamma_n$-isometry and $(\widetilde{S}_{12}, \dots, \widetilde{S}_{(n-1)2},M_{e^{it}})$ on $\overline{\Delta_P L^2(\mathcal{D}_P)}$ is a $\Gamma_n$-unitary. Since $\widetilde{V}_1$ is unitary, $(M_{\widetilde{G}_1^*+z\widetilde{G}_{n-1}}, \dots,\\ M_{\widetilde{G}_{n-1}^*+z\widetilde{G}_1},M_z)$ on $H^2(\mathcal{D}_{P^*})$ being unitarily equivalent to $(M_{G_1^*+zG_{n-1}}, \dots, M_{G_{n-1}^*+zG_1},M_z)$, is a pure $\Gamma_n$-isometry. Similarly, one can prove that $(S_{12}, \dots, S_{(n-1)2},M_{e^{it}})$ on $\overline{\Delta_P L^2(\mathcal{D}_P)}$ is a $\Gamma_n$-unitary. From Equation \eqref{eqn7}, it is clear that $\varphi_2\mathcal{H}=\mathcal{H}_P$ is invariant under $\begin{pmatrix}
		M_{\widetilde{G}_i^*+z\widetilde{G}_{n-i}} & 0\\
		0 & S_{i2}
		\end{pmatrix}^*$, that is, $\mathcal{Q}_P$ is invariant under $\begin{pmatrix}
		M_{\widetilde{G}_i^*+z\widetilde{G}_{n-i}} & 0\\
		0 & S_{i2}
		\end{pmatrix}$. Then by Lemma \ref{lemmain}, there exists a $\Gamma_n$-isometry $(M_{Y_1+zY_{n-1}^*}, \dots, M_{Y_{n-1}+zY_1^*},M_z)$ on $H^2(\mathcal{D}_P)$ such that $\begin{pmatrix}
		M_{\widetilde{G}_i^*+z\widetilde{G}_{n-i}} & 0\\
		0 & S_{i2}
		\end{pmatrix}\begin{pmatrix}
		M_{\Theta_P}\\
		\Delta_P
		\end{pmatrix}=\begin{pmatrix}
		M_{\Theta_P}\\
		\Delta_P
		\end{pmatrix}M_{Y_i+zY_{n-i}^*}.$ By using Equations \eqref{eqn7} and \eqref{eqn9} we have
		\begingroup
		\allowdisplaybreaks
		\begin{align*}
		&S_i^*-S_{n-i}P^*\\
		=&\varphi_2^*\begin{pmatrix}
		M_{\widetilde{G}_i^*+z\widetilde{G}_{n-i}} & 0\\
		0 & \widetilde{S}_{i2}
		\end{pmatrix}^*\varphi_2-\varphi_2^*\begin{pmatrix}
		M_{\widetilde{G}_{n-i}^*+z\widetilde{G}_{i}} & 0\\
		0 & \widetilde{S}_{(n-i)2}
		\end{pmatrix}\varphi_2\varphi_2^*\begin{pmatrix}
		M_z & 0\\
		0 & M_{e^{it}}
		\end{pmatrix}^*\varphi_2\\
		=&\varphi_2^*\begin{pmatrix}
		P_{\mathbb{C}}\otimes \widetilde{G}_i & 0\\
		0 & 0
		\end{pmatrix}\varphi_2\; (\text{since }\widetilde{S}_{i2}^*=M_{e^{it}}^*\widetilde{S}_{(n-1)2}=\widetilde{S}_{(n-1)2}M_{e^{it}}^*)\\
		=& \varphi_1\begin{pmatrix}
		P_{\mathbb C}\otimes V_1\widetilde{G}_iV_1^* & 0\\
		0 & 0
		\end{pmatrix}\varphi_1\\
		=& D_{P^*}(V_1\widetilde{G}_iV_1^*)D_{P^*}.
		\end{align*}
		\endgroup
		Since $\ft$-tuple of a $\Gamma_n$-contraction is unique, $(V_1\widetilde{G}_1V_1^*, \dots, V_1\widetilde{G}_{n-1}V_1^*)=(B_1, \dots, B_{n-1})$. Therefore, the tuple $(\widetilde{G}_1, \dots, \widetilde{G}_{n-1})$ is unitarily equivalent to the $\ft$-tuple $(B_1, \dots, B_{n-1})$ of $(S_1^*, \dots, S_{n-1}^*,P^*)$. This completes the proof.
	\end{proof}

\section{The $\ft$-tuples of c.n.u $\Gamma_n$-contractions: necessary condition and sufficient condition}
We start with a technical lemma that will be required to the proof of Theorem \ref{necessarytheorem}.
\begin{lem}\label{lem}
	Let $(S_1, \dots, S_{n-1},P)$ be a c.n.u $\Gamma_n$-contraction as in Theorem \ref{bisai} with $S_{j2}=M_{{{n-1} \choose {j}} + {{n-1} \choose {j-1}}e^{it}}$ in the representation \eqref{eqn7} of $S_j$ for each $j$. Then for $j =1, \dots, n-1$
	\[
	{{n-1} \choose {j-1}}\mathcal{A}_*+{{n-1}\choose {j}}\mathcal{A}_*P^*=\mathcal{A}_*S_{n-j}^*
	\]
	\qquad\qquad\qquad\qquad\qquad\qquad and
	\[
	{{n-1} \choose {j-1}}P\mathcal{A}_* + {{n-1} \choose {j}}\mathcal{A}_*=P\mathcal{A}_*S_{n-j}^*.
	\]
\end{lem}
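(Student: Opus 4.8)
The plan is to read the two identities off the Sz.-Nagy--Foias embedding \eqref{eqn7}, transported to the Douglas model through the intertwining unitary $\mathfrak U$ of \eqref{eqnU}. Write $\varphi_2=\begin{pmatrix}\gamma\\ J\end{pmatrix}$, where $J\colon\mathcal H\to\overline{\Delta_P L^2(\mathcal D_P)}$ is the second coordinate. First I would record four bookkeeping facts drawn from the preparatory material. Comparing second coordinates in \eqref{eqn7} gives the intertwining relation $JS_i^*=S_{i2}^*J$ for each $i$. Comparing second coordinates in $\mathfrak U\varphi_2=\varphi_1$, together with the block form \eqref{eqnU}, gives $V_2 J=\mathcal A_*^{1/2}$; hence $J^*J=\mathcal A_*^{1/2}V_2V_2^*\mathcal A_*^{1/2}=\mathcal A_*$. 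Reading the lower-right corners of \eqref{eqn10} and \eqref{eqnU} yields $V_2 M_{e^{it}}^*V_2^*=U$. Finally, since $U$ is the minimal unitary extension of the isometry $V_*\colon\mathcal A_*^{1/2}x\mapsto\mathcal A_*^{1/2}P^*x$, one has $U\mathcal A_*^{1/2}=\mathcal A_*^{1/2}P^*$.

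With these in hand the first identity is algebra. Setting $i=n-j$ in the intertwining relation and using $J^*J=\mathcal A_*$,
\[
\mathcal A_* S_{n-j}^*=J^*JS_{n-j}^*=J^*S_{(n-j)2}^*J .
\]
By hypothesis $S_{(n-j)2}^*=\binom{n-1}{n-j}I+\binom{n-1}{n-j-1}M_{e^{it}}^*$, and the symmetries $\binom{n-1}{n-j}=\binom{n-1}{j-1}$ and $\binom{n-1}{n-j-1}=\binom{n-1}{j}$ turn this into $\binom{n-1}{j-1}I+\binom{n-1}{j}M_{e^{it}}^*$. Substituting, and using $J^*J=\mathcal A_*$ together with the key computation
\[
J^*M_{e^{it}}^*J=(V_2J)^*U(V_2J)=\mathcal A_*^{1/2}U\mathcal A_*^{1/2}=\mathcal A_*P^* ,
\]
(where the last step is $U\mathcal A_*^{1/2}=\mathcal A_*^{1/2}P^*$), yields exactly
\[
\mathcal A_* S_{n-j}^*=\binom{n-1}{j-1}\mathcal A_*+\binom{n-1}{j}\mathcal A_*P^* .
\]

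For the second identity I would simply multiply the first on the left by $P$ and invoke the defining property $P\mathcal A_*P^*=\mathcal A_*$ of the strong limit $\mathcal A_*$ (the case $n=1$ of $P^n\mathcal A_*P^{*n}=\mathcal A_*$). This collapses $\binom{n-1}{j}P\mathcal A_*P^*$ to $\binom{n-1}{j}\mathcal A_*$ and gives $P\mathcal A_* S_{n-j}^*=\binom{n-1}{j-1}P\mathcal A_*+\binom{n-1}{j}\mathcal A_*$, as required.

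The only genuinely non-routine point is the identification of the ``$\Gamma_n$-unitary'' coordinate $J$ of the Sz.-Nagy--Foias embedding with the Douglas-model datum $\mathcal A_*^{1/2}$, and the matching of the two torus actions through $V_2$, i.e.\ the relations $V_2 J=\mathcal A_*^{1/2}$ and $V_2 M_{e^{it}}^*V_2^*=U$; once these are extracted from $\mathfrak U\varphi_2=\varphi_1$, \eqref{eqn10} and \eqref{eqnU}, the rest is bookkeeping with binomial identities. I would also note that $M_{e^{it}}$ restricts to a unitary on the reducing subspace $\overline{\Delta_P L^2(\mathcal D_P)}$, being the last component of a $\Gamma_n$-unitary, so that passing to adjoints above is harmless.
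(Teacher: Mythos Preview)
Your argument is correct and follows essentially the same approach as the paper: transport the representation \eqref{eqn7} to the Douglas model via $\mathfrak U$, identify $\mathcal A_*$ with the second-coordinate data, and read off the identities; the second identity is obtained in both cases by left-multiplying the first by $P$ and invoking $P\mathcal A_*P^*=\mathcal A_*$. The only cosmetic difference is that the paper realizes $\mathcal A_*$ as $\varphi_1^*\,\mathrm{diag}(0,I)\,\varphi_1$ through the limit $P^nP^{*n}\to\mathcal A_*$ and then computes $\binom{n-1}{j-1}\mathcal A_*+\binom{n-1}{j}\mathcal A_*P^*-\mathcal A_*S_{n-j}^*$ directly in that model, whereas you isolate the second coordinate $J$ of $\varphi_2$ and use $J^*J=\mathcal A_*$, $J^*M_{e^{it}}^*J=\mathcal A_*P^*$; these are two packagings of the same computation.
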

\begin{proof}
	From Equations \eqref{eqn10} and \eqref{eqnU} we have $U^*=V_2M_{e^{it}}V_2^*$. Again using \eqref{eqn7}, \eqref{eqnU} and $\mathfrak{U}\varphi_2=\varphi_1$ we have 
	\begingroup
	\allowdisplaybreaks
	\begin{align*}
	\varphi_1 S_j^* & = \begin{pmatrix}
	M_{V_1} & 0\\
	0 & V_2
	\end{pmatrix}\begin{pmatrix}
	M_{\widetilde{G}_j^* + z \widetilde{G}_{n-j}} & 0\\
	0 & S_{j2}
	\end{pmatrix}^*\begin{pmatrix}
	M_{V_1} & 0\\
	0 & V_2
	\end{pmatrix}^*\varphi_1\\
	&= \begin{pmatrix}
	M_{V_1\widetilde{G}_j^*V_1^* + zV_{1}\widetilde{G}_{n-j}V_1^*} & 0\\
	0 & {{n-1}\choose {j}}+{{n-1}\choose {j-1}}V_2M_{e^{it}}V_2^*
	\end{pmatrix}^*\varphi_1\\
	& = \begin{pmatrix}
	M_{B_j^* + zB_{n-j}} & 0\\
	0 & {{n-1}\choose {j}}+{{n-1}\choose {j-1}}U^*
	\end{pmatrix}^*\varphi_1 \quad (\text{since }V_1\widetilde{G}_{j}V_1^*=B_j).
	\end{align*}
	\endgroup	
	That is, $S_j^*=\varphi_1^*\begin{pmatrix}
	M_{B_j^*+zB_{n-j}} & 0\\
	0 & {{n-1}\choose {j}} + {{n-j}\choose {j-1}}U^*
	\end{pmatrix}^*\varphi_1$. Similarly, we have $\varphi_1P^*=\begin{pmatrix}
	M_z & 0\\
	0 & U^*
	\end{pmatrix}^*\varphi_1$, that is, $P^*=\varphi_1^*\begin{pmatrix}
	M_z & 0\\
	0 & U^*
	\end{pmatrix}^*\varphi_1$. Clearly $\textit{Ran}(\varphi_1)$ is invariant under $\begin{pmatrix}
	M_z & 0\\
	0 & U^*
	\end{pmatrix}^*$ and $\begin{pmatrix}
	M_{B_j^*+zB_{n-j}} & 0\\
	0 & {{n-1}\choose {j}} + {{n-j}\choose {j-1}}U^*
	\end{pmatrix}^*$ for $j=1, \dots, n-1$. Since $\varphi_1$ is isometry we have
	$P^{*n}= \varphi_1^*\begin{pmatrix}
	M_z^n & 0\\
	0 & U^{*n}
	\end{pmatrix}^*\varphi_1.$
	Therefore, $P^nP^{*n}=\varphi_1^*\begin{pmatrix}
	M_z^nM_z^{*n} & 0\\
	0 & I
	\end{pmatrix}\varphi_1$ and hence $\mathcal{A}_*=\varphi_1^*\begin{pmatrix}
	0 & 0\\
	0 & I
	\end{pmatrix}\varphi_1$. Now
	\begingroup
	\allowdisplaybreaks
	\begin{align*}
	&{{n-1} \choose {j-1}}\mathcal{A}_*+{{n-1}\choose {j}}\mathcal{A}_*P^*-\mathcal{A}_*S_{n-j}^*\\
	=& {{n-1} \choose {j-1}}\varphi_1^*\begin{pmatrix}
	0 & 0\\
	0 & I
	\end{pmatrix}\varphi_1+{{n-1} \choose {j}}\varphi_1^*\begin{pmatrix}
	0 & 0\\
	0 & I
	\end{pmatrix}\varphi_1\varphi_1^*\begin{pmatrix}
	M_z^* & 0\\
	0 & U
	\end{pmatrix}\varphi_1\\
	& \qquad\qquad - \varphi_1^*\begin{pmatrix}
	0 & 0\\
	0 & I
	\end{pmatrix}\varphi_1\varphi_1^*\begin{pmatrix}
	M_{B_{n-j}^*+zB_{j}} & 0\\
	0 & {{n-1} \choose {j-1}} +{{n-1} \choose {j}}U^*
	\end{pmatrix}^*\varphi_1\\
	=&\varphi_1^*\begin{pmatrix}
	0 & 0\\
	0 & {{n-1} \choose {j-1}}+{{n-1} \choose {j}}U
	\end{pmatrix}\varphi_1 - \varphi_1^*\begin{pmatrix}
	0 & 0\\
	0 & {{n-1} \choose {j-1}}+{{n-1} \choose {j}}U
	\end{pmatrix}\varphi_1\\
	=& 0.
	\end{align*}
	\endgroup
	Therefore, ${{n-1} \choose {j-1}}\mathcal{A}_*+{{n-1}\choose {j}}\mathcal{A}_*P^*=\mathcal{A}_*S_{n-j}^*$. Since $P\mathcal{A}_*P^*=\mathcal{A}_*$, hence ${{n-1} \choose {j-1}}P\mathcal{A}_*+{{n-1}\choose {j}}\mathcal{A}_*=P\mathcal{A}_*S_{n-j}^*$. This completes the proof.
\end{proof}

Before proceeding further, we recall a necessary and sufficient condition under which a tuple of operator becomes the $\ft$-tuple of a $\Gamma_n$-contraction.
\begin{thm}[\cite{B:P}, Theorem 2.1]\label{BisaiPal1}
	A tuple of operators $(A_1, \dots, A_{n-1})$ defined on $\mathcal{D}_P$ is the $\ft$-tuple of a $\Gamma_n$-contraction $(S_1, \dots, S_{n-1},P)$ if and only if $(A_1, \dots, A_{n-1})$ satisfy the following operator equations in $X_1, \dots, X_{n-1}$:
	\[
	D_PS_i=X_iD_P+X_{n-i}^*D_PP, \qquad i= 1,\dots, n-1.
	\]
\end{thm}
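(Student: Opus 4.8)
The plan is to reduce the statement to the equivalence of two operator identities and to prove each implication in turn. By the existence-and-uniqueness theorem for fundamental operators (\cite{PalNagy}, see also Theorem~4.4 of \cite{A:Pal}), the $\ft$-tuple of $(S_1,\dots,S_{n-1},P)$ is the \emph{unique} tuple on $\mathcal{D}_P$ satisfying the defining relations
\begin{equation}\label{plan:def}
S_i-S_{n-i}^{*}P=D_{P}A_iD_{P},\qquad i=1,\dots,n-1,
\end{equation}
so ``$(A_1,\dots,A_{n-1})$ is the $\ft$-tuple'' is the same as ``$(A_1,\dots,A_{n-1})$ solves \eqref{plan:def}''. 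Writing the claimed operator equations as
\begin{equation}\label{plan:opeq}
D_{P}S_i=A_iD_{P}+A_{n-i}^{*}D_{P}P,\qquad i=1,\dots,n-1,
\end{equation}
it suffices to prove that a tuple on $\mathcal{D}_P$ satisfies \eqref{plan:def} if and only if it satisfies \eqref{plan:opeq}. Throughout I use the commutativity $S_iP=PS_i$ together with the standard intertwining relations $PD_P=D_{P^{*}}P$, $D_PP^{*}=P^{*}D_{P^{*}}$ and $D_{P^{*}}P=PD_P$ from Section~I.3 of \cite{Nagy}.

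For \eqref{plan:def}$\Rightarrow$\eqref{plan:opeq}, fix $i$ and set $E_i:=D_{P}S_i-A_iD_{P}-A_{n-i}^{*}D_{P}P$; the goal is to show $E_i=0$. Substituting $S_i=S_{n-i}^{*}P+D_PA_iD_P$ from \eqref{plan:def} and using $D_P^2=I-P^{*}P$ reduces $E_i$ to $D_PS_{n-i}^{*}P-P^{*}PA_iD_P-A_{n-i}^{*}D_PP$. The heart of the computation is to evaluate $D_PS_{n-i}^{*}P$: inserting the adjoint of the $(n-i)$-th relation in \eqref{plan:def}, namely $S_{n-i}^{*}=P^{*}S_i+D_PA_{n-i}^{*}D_P$, and then converting $D_PP^{*}S_iP$ into $P^{*}PD_PS_i$ by the chain $D_PP^{*}S_iP=P^{*}D_{P^{*}}S_iP=P^{*}D_{P^{*}}PS_i=P^{*}PD_PS_i$ (using $D_PP^{*}=P^{*}D_{P^{*}}$, then $S_iP=PS_i$, then $D_{P^{*}}P=PD_P$), one arrives at the self-referential identity $E_i=P^{*}PE_i$, that is, $D_P^2E_i=0$. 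Since $A_i,A_{n-i}^{*}\in\mathcal{B}(\mathcal{D}_P)$, the range of $E_i$ lies in $\mathcal{D}_P=\overline{\text{Ran}}\,D_P$, whereas $D_P^2E_i=0$ forces that range into $\ker D_P=\mathcal{D}_P^{\perp}$; hence $E_i=0$, which is precisely \eqref{plan:opeq}.

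For the converse \eqref{plan:opeq}$\Rightarrow$\eqref{plan:def}, I argue by uniqueness of solutions of \eqref{plan:opeq}. The $\ft$-tuple exists and, by the direction just established, already solves \eqref{plan:opeq}; so if $(A_1,\dots,A_{n-1})$ is any solution, the differences $Y_i$ from the $\ft$-tuple satisfy the homogeneous system $Y_iD_P+Y_{n-i}^{*}D_PP=0$. Multiplying on the left by $D_P$ and writing $Z_i:=D_PY_iD_P$ (so that $D_PY_{n-i}^{*}D_P=Z_{n-i}^{*}$) turns this into $Z_i=-Z_{n-i}^{*}P$, whence, on taking adjoints and combining with the index-swapped equation $Z_{n-i}=-Z_i^{*}P$, one gets $Z_i=P^{*}Z_iP$ and therefore $Z_i=P^{*k}Z_iP^{k}$ for every $k$. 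The elementary estimate $\|D_PP^{k}h\|^{2}=\|P^{k}h\|^{2}-\|P^{k+1}h\|^{2}\to0$, valid for any contraction because $\{\|P^{k}h\|\}$ is nonincreasing, yields $\langle Z_ih,h\rangle=\langle Y_iD_PP^{k}h,D_PP^{k}h\rangle\to0$, so $\langle Z_ih,h\rangle=0$ for all $h\in\mathcal{H}$ and hence $Z_i=0$ on the complex Hilbert space $\mathcal{H}$. As $\{D_Pk:k\in\mathcal{H}\}$ is dense in $\mathcal{D}_P$ and each $Y_i$ maps into $\mathcal{D}_P$, the identity $D_PY_iD_P=0$ forces $Y_i=0$; thus $(A_1,\dots,A_{n-1})$ equals the $\ft$-tuple and so solves \eqref{plan:def}.

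I expect the main obstacle to be the bookkeeping in the forward implication, concentrated in the reduction $D_PP^{*}S_iP=P^{*}PD_PS_i$: this is the single place where commutativity and all the intertwining relations must be marshalled together to produce the crucial identity $E_i=P^{*}PE_i$, and everything downstream (the range dichotomy $\mathcal{D}_P\cap\mathcal{D}_P^{\perp}=\{0\}$ and the uniqueness estimate $\|D_PP^{k}h\|\to0$) is short and routine once that identity is in hand.
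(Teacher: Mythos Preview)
Your argument is correct in both directions: the forward implication correctly derives $E_i=P^{*}PE_i$ via the intertwining chain and then uses $\operatorname{Ran}E_i\subseteq\mathcal{D}_P\cap\ker D_P=\{0\}$, and the converse correctly obtains $Z_i=P^{*k}Z_iP^{k}$ and uses $\|D_PP^{k}h\|\to0$ together with polarization on a complex Hilbert space to force $Z_i=0$ and hence $Y_i=0$.

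There is nothing to compare against in the present paper: this theorem is merely quoted from \cite{B:P} (Theorem~2.1) without a proof being reproduced here. Your approach is the natural one and is essentially what the original reference does (reduce the equivalence of the two operator identities to a range/kernel argument for the difference), so there is no methodological divergence to report.
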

The next two results provide relations between the $\ft$-tuples of $\Gamma_{n}$-contractions $(S_1, \dots,\\ S_{n-1},P)$ and $(S_1^*, \dots, S_{n-1}^*,P^*)$.

\begin{lem}[\cite{B:P}, Lemma 2.4]\label{BPlem1}
	Let $(S_1, \dots, S_{n-1},P)$ be a $\Gamma_n$-contraction on a Hilbert space $\mathcal H$ and $(A_1, \dots, A_{n-1})$ and $(B_1, \dots, B_{n-1})$ be respectively the $\ft$-tuples of $(S_1, \dots, S_{n-1},P)$ and $(S_1^*, \dots, S_{n-1}^*,P^*)$. Then 
	\[
	D_PA_i = (S_iD_P-D_{P^*}B_{n-i}P)|_{\mathcal{D}_P}, \quad \text{for }i=1, \dots, n-1.
	\]
\end{lem}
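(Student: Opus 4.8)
The plan is to verify the stated identity after composing both sides on the right with $D_P$, and then to strip off the trailing $D_P$ by a density argument. Since $D_P$ has dense range in $\mathcal{D}_P=\overline{\text{Ran}}\,D_P$, two bounded operators on $\mathcal{D}_P$ that agree on $\text{Ran}\,D_P$ must coincide; so it suffices to prove
\[
D_PA_iD_P=\big(S_iD_P-D_{P^*}B_{n-i}P\big)D_P \qquad (i=1,\dots,n-1)
\]
as an identity of operators on $\mathcal{H}$.

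The two ingredients are the defining relations of the $\ft$-tuples and the intertwining $PD_P=D_{P^*}P$ (with its adjoint $D_PP^*=P^*D_{P^*}$), which follows from $PD_P^2=(I-PP^*)P=D_{P^*}^2P$ and the continuous functional calculus. By definition of the $\ft$-tuple of $(S_1,\dots,S_{n-1},P)$ the left-hand side equals $S_i-S_{n-i}^*P$. For the right-hand side I would first write $S_iD_P^2=S_i(I-P^*P)=S_i-S_iP^*P$. Then, using $PD_P=D_{P^*}P$, I would rewrite $D_{P^*}B_{n-i}PD_P=D_{P^*}B_{n-i}D_{P^*}P$, and invoke the defining relation of the $\ft$-tuple of the adjoint $(S_1^*,\dots,S_{n-1}^*,P^*)$, namely $S_j^*-S_{n-j}P^*=D_{P^*}B_jD_{P^*}$, taken at $j=n-i$. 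This gives $D_{P^*}B_{n-i}D_{P^*}=S_{n-i}^*-S_iP^*$, hence $D_{P^*}B_{n-i}PD_P=(S_{n-i}^*-S_iP^*)P=S_{n-i}^*P-S_iP^*P$.

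Subtracting, the two occurrences of $S_iP^*P$ cancel and the right-hand side collapses to $S_i-S_{n-i}^*P$, which matches $D_PA_iD_P$. This closes the computation, and the density remark then upgrades the equality of the $D_P$-compressions to the asserted identity $D_PA_i=(S_iD_P-D_{P^*}B_{n-i}P)|_{\mathcal{D}_P}$.

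I do not expect a serious obstacle; the only points requiring care are bookkeeping ones. First, one must track the index shift $i\mapsto n-i$ when passing from $(A_i)$ to $(B_{n-i})$, so that the correct defining equation of the adjoint tuple is used. Second, one must watch the domains: $B_{n-i}$ lives on $\mathcal{D}_{P^*}$, and the expression $D_{P^*}B_{n-i}PD_P$ is meaningful precisely because $PD_P=D_{P^*}P$ forces the argument of $B_{n-i}$ into $\text{Ran}\,D_{P^*}\subseteq\mathcal{D}_{P^*}$. Finally, the step removing $D_P$ from the right is legitimate exactly because $\text{Ran}\,D_P$ is dense in $\mathcal{D}_P$, and this is the source of the restriction $|_{\mathcal{D}_P}$ appearing in the statement.
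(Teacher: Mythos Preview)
Your proof is correct. The paper does not supply its own proof of this lemma; it is quoted from \cite{B:P} as Lemma~2.4, so there is no in-paper argument to compare against. That said, your approach---compose on the right with $D_P$, invoke the defining relations $D_PA_iD_P=S_i-S_{n-i}^*P$ and $D_{P^*}B_{n-i}D_{P^*}=S_{n-i}^*-S_iP^*$ together with the intertwining $PD_P=D_{P^*}P$, and then use density of $\text{Ran}\,D_P$ in $\mathcal{D}_P$ to remove the trailing $D_P$---is the standard one and is exactly the argument given in the source \cite{B:P}.
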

\begin{lem}[\cite{B:P}, Lemma 2.5]\label{BPlem}
	Let $(S_1, \dots, S_{n-1},P)$ be a $\Gamma_n$-contraction on a Hilbert space $\mathcal H$ and $(A_1, \dots, A_{n-1})$ and $(B_1, \dots, B_{n-1})$ be respectively the $\ft$-tuples of $(S_1, \dots, S_{n-1},P)$ and $(S_1^*, \dots, S_{n-1}^*,P^*)$. Then 
	\[
	PA_i = B_i^*P|_{\mathcal D_P}, \quad \text{for }i=1, \dots, n-1.
	\]
\end{lem}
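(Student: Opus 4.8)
The plan is to derive the identity directly from the two defining relations
\[
S_i - S_{n-i}^* P = D_P A_i D_P, \qquad S_i^* - S_{n-i} P^* = D_{P^*} B_i D_{P^*},
\]
the Sz.-Nagy--Foias intertwining relation $P D_P = D_{P^*} P$ (Section I.3 of \cite{Nagy}), and the commutativity $S_i P = P S_i$ of the $\Gamma_n$-contraction. First I would take the adjoint of the second identity, which gives $S_i - P S_{n-i}^* = D_{P^*} B_i^* D_{P^*}$, so that both $A_i$ and $B_i^*$ appear sandwiched between defect operators of the correct kind.

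Next I would arrange matters so that a factor of $P$ can be pushed through the defect operators from either side. Left-multiplying the $A_i$-identity by $P$ and invoking $P D_P = D_{P^*} P$ yields
\[
P S_i - P S_{n-i}^* P = D_{P^*}\,(P A_i)\, D_P,
\]
while right-multiplying the adjoint $B_i$-identity by $P$ and invoking $D_{P^*} P = P D_P$ yields
\[
S_i P - P S_{n-i}^* P = D_{P^*}\,(B_i^* P)\, D_P.
\]
The crucial step is the observation that, since the tuple commutes, $S_i P = P S_i$, so the left-hand sides of these two displays coincide; subtracting them produces the single sandwiched identity
\[
D_{P^*}\,\bigl(B_i^* P - P A_i\bigr)\, D_P = 0.
\]

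It then remains to cancel the outer defect operators, and this is where the only real care is needed. I would use that $\text{Ran}\, D_P$ is dense in $\mathcal{D}_P$ and that $D_{P^*}$ is injective on $\mathcal{D}_{P^*}=\overline{\text{Ran}}\, D_{P^*}$: for arbitrary $h\in\mathcal{H}$ the vector $\bigl(B_i^* P - P A_i\bigr)D_P h$ lies in $\mathcal{D}_{P^*}$ and is annihilated by $D_{P^*}$, hence is zero, and density together with boundedness forces $B_i^* P - P A_i$ to vanish on all of $\mathcal{D}_P$, which is exactly $P A_i = B_i^* P|_{\mathcal{D}_P}$. I expect the main (indeed the only) obstacle to be the bookkeeping of domains rather than any genuine difficulty: one must first check that $B_i^* P|_{\mathcal{D}_P}$ is even meaningful, i.e. that $P$ carries $\mathcal{D}_P$ into $\mathcal{D}_{P^*}$ so that the operator $B_i^*$ on $\mathcal{D}_{P^*}$ may be applied — which once more follows from $P D_P = D_{P^*} P$, giving $P(\text{Ran}\, D_P)\subseteq \text{Ran}\, D_{P^*}$ — and one must consistently treat $A_i$ and $B_i$ as operators on their respective defect spaces throughout. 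The algebra itself is a short manipulation.
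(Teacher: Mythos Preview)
Your argument is correct. The paper does not actually supply a proof of this lemma---it is quoted from \cite{B:P} as Lemma 2.5---so there is no in-paper proof to compare against; your derivation via $PD_P=D_{P^*}P$, commutativity $S_iP=PS_i$, and injectivity of $D_{P^*}$ on $\mathcal{D}_{P^*}$ is the standard direct route and is exactly what one expects the cited proof to be.
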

The following result gives a relation between the $\ft$-tuples of $\Gamma_n$-contraction $(S_1, \dots,\\S_{n-1},P)$ and its adjoint $(S_1^*, \dots, S_{n-1}^*,P^*)$ and the characteristic function $\Theta_P$ of $P$.
\begin{lem}[\cite{B:P}, Lemma 3.1]\label{BisaiPal}
	Let $(A_1,\dots, A_{n-1})$ and $(B_1, \dots, B_{n-1})$ be the $\ft$-tuples of a $\Gamma_n$-contraction $(S_1, \dots, S_{n-1},P)$ and its adjoint $(S_1^*,\dots, S_{n-1}^*,P^*)$ respectively. Then for each $i = 1, \dots, n-1$
	\[
	(B_i^* + zB_{n-i})\Theta_P(z)=\Theta_P(z)(A_i + zA_{n-i}^*), \quad \text{for all }z \in \mathbb D.
	\]
\end{lem}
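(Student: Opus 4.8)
The plan is to verify the identity by expanding the characteristic function as a power series in $z$ and matching coefficients, using the two preceding lemmas as the key inputs. Since $(I-zP^*)^{-1}=\sum_{k\ge 0}z^kP^{*k}$ on $\mathbb D$, one has
\[
\Theta_P(z)=-P|_{\mathcal{D}_P}+\sum_{m\ge 1}z^m\,D_{P^*}P^{*(m-1)}D_P ,
\]
so both $(B_i^*+zB_{n-i})\Theta_P(z)$ and $\Theta_P(z)(A_i+zA_{n-i}^*)$ are $\mathcal{B}(\mathcal{D}_P,\mathcal{D}_{P^*})$-valued power series, and it suffices to equate the coefficient of each $z^m$. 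The constant term asks exactly that $B_i^*P=PA_i$ on $\mathcal{D}_P$, which is Lemma \ref{BPlem}. Throughout I would use the two one-sided relations $D_PA_i=S_iD_P-D_{P^*}B_{n-i}P$ (Lemma \ref{BPlem1}) and, applying Lemma \ref{BPlem1} to the adjoint $\Gamma_n$-contraction $(S_1^*,\dots,S_{n-1}^*,P^*)$ and taking adjoints, $B_i^*D_{P^*}=D_{P^*}S_i-PA_{n-i}^*D_P$, together with the intertwiners $PD_P=D_{P^*}P$, $D_PP^*=P^*D_{P^*}$ and the adjoint form $P^*B_{n-i}=A_{n-i}^*P^*$ of Lemma \ref{BPlem}.

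For the coefficient of $z^1$ the required identity is $B_i^*D_{P^*}D_P-B_{n-i}P=D_{P^*}D_PA_i-PA_{n-i}^*$; substituting the two relations above and collapsing the result with $D_P^2=I-P^*P$ and $D_{P^*}^2=I-PP^*$ shows $B_i^*D_{P^*}D_P-D_{P^*}D_PA_i=B_{n-i}P-PA_{n-i}^*$, which is precisely this identity. For the higher coefficients $m\ge 2$ the identity to be checked is $B_i^*E_{m-1}+B_{n-i}E_{m-2}=E_{m-1}A_i+E_{m-2}A_{n-i}^*$, where $E_k:=D_{P^*}P^{*k}D_P$. I would prove these by induction on $m$: after feeding in the same substitutions for $D_PA_i$ and $B_i^*D_{P^*}$, the surviving obstruction is the commutator term $D_{P^*}\bigl(S_iP^{*(m-1)}-P^{*(m-1)}S_i\bigr)D_P$, which I reduce through $[S_i,P^*]$ using the fundamental equation $S_i=S_{n-i}^*P+D_PA_iD_P$; this rewrites $[S_i,P^*]$ in terms of $D_P^2-D_{P^*}^2$ and lower-order data, so that the $m$-th identity folds back onto the $(m-1)$-th one. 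Equivalently, one may keep the resolvent intact and show that $G(z):=(B_i^*+zB_{n-i})D_{P^*}(I-zP^*)^{-1}D_P-D_{P^*}(I-zP^*)^{-1}D_P(A_i+zA_{n-i}^*)$ is independent of $z$, hence equal to its value $G(0)=B_{n-i}P-PA_{n-i}^*$; the two substitutions reduce this to the vanishing of a single resolvent-sandwiched commutator $z\,D_{P^*}(I-zP^*)^{-1}[S_i,P^*](I-zP^*)^{-1}D_P$ against the mixed $A_{n-i}^*,B_{n-i}$ terms.

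The main obstacle is exactly the interaction between $S_i$ and $P^*$: whereas $S_i$ commutes with $P$, it does \emph{not} commute with $P^*$, so the powers $P^{*k}$ inside $\Theta_P(z)$ cannot simply be pushed past $S_i$, and Lemmas \ref{BPlem} and \ref{BPlem1} only supply one-sided relations (they give $D_PA_i$, $B_i^*D_{P^*}$ and their adjoints, but not the ``wrong-order'' companions $D_PA_{n-i}^*$ or $B_{n-i}D_{P^*}$ that the mixed terms seem to demand). The crux is therefore to arrange the induction so that the mixed $A_{n-i}^*$ and $B_{n-i}$ contributions are produced exactly by the commutator term via the fundamental equation, making the recursion close without ever needing the unavailable relations; once the coefficient identities hold for every $m$, resumming the two series recovers the asserted operator identity $(B_i^*+zB_{n-i})\Theta_P(z)=\Theta_P(z)(A_i+zA_{n-i}^*)$ for all $z\in\mathbb D$.
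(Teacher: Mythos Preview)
The paper does not prove this lemma; it is quoted verbatim from \cite{B:P} (Lemma~3.1) and used as input, so there is no proof in the present paper to compare against.

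Your power-series strategy is sound, and your treatment of the $z^0$ and $z^1$ coefficients is correct. The gap is in the higher coefficients $m\ge 2$: you announce an induction via the commutator $[S_i,P^{*}]$ but never carry it out, and you explicitly flag the absence of the ``wrong-order'' relations $D_PA_{n-i}^{*}$ and $B_{n-i}D_{P^{*}}$ as the main obstacle. That obstacle is illusory. The paper also records Theorem~\ref{BisaiPal1}, which says $D_PS_j=A_jD_P+A_{n-j}^{*}D_PP$; taking $j=n-i$ and adjoints gives $D_PA_{n-i}^{*}=S_{n-i}^{*}D_P-P^{*}D_PA_i$, and applying the same theorem to the adjoint $\Gamma_n$-contraction yields $B_{n-i}D_{P^{*}}=D_{P^{*}}S_{n-i}^{*}-B_i^{*}D_{P^{*}}P^{*}$. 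With these two identities the $m\ge 2$ coefficient comparison is a one-line telescoping computation, using only that $S_{n-i}^{*}$ commutes with $P^{*}$:
\[
B_i^{*}E_{m-1}+B_{n-i}E_{m-2}=D_{P^{*}}S_{n-i}^{*}P^{*(m-2)}D_P=D_{P^{*}}P^{*(m-2)}S_{n-i}^{*}D_P=E_{m-1}A_i+E_{m-2}A_{n-i}^{*},
\]
where $E_k=D_{P^{*}}P^{*k}D_P$. So no induction, no analysis of $[S_i,P^{*}]$, and no resolvent argument is needed; the relations you thought unavailable are exactly what Theorem~\ref{BisaiPal1} supplies. Once you plug this in, your outline becomes a complete proof.
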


The next theorem provides a necessary condition on the $\ft$-tuples of a c.n.u $\Gamma_n$-contraction $(S_1,\dots,S_{n-1},P)$ and its adjoint $(S_1^*,\dots, S_{n-1}^*,P^*)$.
\begin{thm}\label{necessarytheorem}
	Let $(S_1, \dots, S_{n-1},P)$ be a c.n.u $\Gamma_n$-contraction on a Hilbert space $\mathcal{H}$ with $(A_1, \dots, A_{n-1})$ and $(B_1, \dots, B_{n-1})$ being the $\ft$-tuples of $(S_1, \dots, S_{n-1},P)$ and $(S_1^*, \dots,\\ S_{n-1}^*,P^*)$ such that $S_{j2}={{n-1}\choose {j}}I+M_{{{n-1}\choose {j-1}}e^{it}}$ in the representation \eqref{eqn7} of $S_{j}$. Let $\Sigma_1(z)$ and $\Sigma_2(z)$ be $\Gamma_{n-1}$-contractions for all $z\in \mathbb{D}$. Then 
	\begin{equation}\label{eqnnecessary}
		\begin{pmatrix}
		M_{B_j^* + z B_{n-j}} & 0\\
		0 & M_{{{n-1}\choose {j}}+{{n-1}\choose {j-1}}e^{it}}
		\end{pmatrix}\begin{pmatrix}
		M_{\Theta_P}\\
		\Delta_P
		\end{pmatrix}=\begin{pmatrix}
		M_{\Theta_P}\\
		\Delta_P
		\end{pmatrix}M_{A_j+zA_{n-j}^*}.
	\end{equation}
	Moreover, if $V_1$ is as in \eqref{eqnU}, then there exists a $\Gamma_n$-isometry $(M_{Y_1+zY_{n-1}^*}, \dots, M_{Y_{n-1}+zY_1^*},\\M_z)$ on $H^2(\mathcal{D}_P)$ such that 
	\begin{equation}\label{necessary1}
		\begin{pmatrix}
		M_{B_j^* + zB_{n-j}} & 0\\
		0 & M_{{{n-1}\choose {j}}+{{n-1}\choose {j-1}}e^{it}}
		\end{pmatrix}\begin{pmatrix}
		M_{V_1}M_{\Theta_P}\\
		\Delta_P
		\end{pmatrix}=\begin{pmatrix}
		M_{V_1}M_{\Theta_P}\\
		\Delta_P
		\end{pmatrix}M_{Y_j+zY_{n-j}^*}.
	\end{equation}
\end{thm}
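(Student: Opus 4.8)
The plan is to handle the two identities separately: \eqref{necessary1} comes almost for free from Theorem \ref{bisai}, while the genuine work sits in the bottom row of \eqref{eqnnecessary}. For \eqref{necessary1}, note that under the present hypotheses (in particular $S_{j2}=M_{\binom{n-1}{j}+\binom{n-1}{j-1}e^{it}}$, so that the $\Gamma_n$-unitary eigenvalue is the scalar $\gamma_j:=\binom{n-1}{j}+\binom{n-1}{j-1}e^{it}$, the $j$-th symmetrized coordinate of $(e^{it},1,\dots,1)\in b\Gamma_n$) Theorem \ref{bisai} already supplies a $\Gamma_n$-isometry $(M_{Y_1+zY_{n-1}^*},\dots,M_z)$ on $H^2(\mathcal D_P)$ with
\[
\begin{pmatrix} M_{\widetilde{G}_j^*+z\widetilde{G}_{n-j}} & 0\\ 0 & M_{\gamma_j}\end{pmatrix}\begin{pmatrix} M_{\Theta_P}\\ \Delta_P\end{pmatrix}=\begin{pmatrix} M_{\Theta_P}\\ \Delta_P\end{pmatrix}M_{Y_j+zY_{n-j}^*},
\]
where $\widetilde{G}_j=V_1^*B_jV_1$, i.e. $M_{\widetilde{G}_j^*+z\widetilde{G}_{n-j}}=M_{V_1}^*M_{B_j^*+zB_{n-j}}M_{V_1}$. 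Left-multiplying by $\begin{pmatrix}M_{V_1}&0\\0&I\end{pmatrix}$ and using $M_{V_1}M_{\widetilde{G}_j^*+z\widetilde{G}_{n-j}}=M_{B_j^*+zB_{n-j}}M_{V_1}$ absorbs $V_1$ into the characteristic-function block and produces exactly \eqref{necessary1}.

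For \eqref{eqnnecessary}, the top row $M_{B_j^*+zB_{n-j}}M_{\Theta_P}=M_{\Theta_P}M_{A_j+zA_{n-j}^*}$ is just Lemma \ref{BisaiPal} promoted to multiplication operators. The bottom row demands $\gamma_j\,\Delta_P(t)=\Delta_P(t)(A_j+e^{it}A_{n-j}^*)$ pointwise. Since $\gamma_j$ is a \emph{scalar}, I would first prove the squared version $\gamma_j\,\Delta_P(t)^2=\Delta_P(t)^2(A_j+e^{it}A_{n-j}^*)$ and then recover the unsquared one: iterating the squared identity gives $p(\Delta_P(t)^2)(A_j+e^{it}A_{n-j}^*)=\gamma_j\,p(\Delta_P(t)^2)$ for every polynomial $p$ with $p(0)=0$, and since $\sqrt{x}$ is a uniform limit of such polynomials on $[0,\|\Delta_P(t)\|^2]$ (the scalar $\gamma_j$ passing harmlessly through the functional calculus), this yields $\Delta_P(t)(A_j+e^{it}A_{n-j}^*)=\gamma_j\,\Delta_P(t)$. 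Writing $\Delta_P(t)^2=I-\Theta_P(e^{it})^*\Theta_P(e^{it})$ and feeding in Lemma \ref{BisaiPal} shows that the squared identity is equivalent to the single equation
\[
(A_j+e^{it}A_{n-j}^*)-\gamma_j I=\Theta_P(e^{it})^*\big[(B_j^*+e^{it}B_{n-j})-\gamma_j I\big]\Theta_P(e^{it}).
\]

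Establishing this last equation is the crux, and it is genuinely stronger than Lemma \ref{BisaiPal}: it is precisely the statement that $\mathcal Q_P$ is invariant under $\begin{pmatrix}M_{B_j^*+zB_{n-j}}&0\\0&M_{\gamma_j}\end{pmatrix}$ in the \emph{untwisted} frame $(M_{\Theta_P},\Delta_P)$, whereas Theorem \ref{bisai} only delivers invariance in the $V_1$-twisted frame. This is exactly where Lemma \ref{lem} is meant to enter: its two relations (e.g. $\binom{n-1}{j-1}\mathcal{A}_*+\binom{n-1}{j}\mathcal{A}_*P^*=\mathcal{A}_*S_{n-j}^*$) record how the unitary eigenvalue $\gamma_j$ acts on the residual summand, which the model unitary $\mathfrak U$ identifies with $\overline{\Delta_P L^2(\mathcal D_P)}$ via $V_2$ and $U^*=V_2M_{e^{it}}V_2^*$ from \eqref{eqn10} and \eqref{eqnU}. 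I would transport Lemma \ref{lem} across $\mathfrak U$ to extract the interaction of $M_{\gamma_j}$ with $\Delta_P$, and combine it with the defining relations $S_i-S_{n-i}^*P=D_PA_iD_P$ together with Lemmas \ref{BPlem1} and \ref{BPlem} to verify the displayed equation by direct computation. The main obstacle is this transport-and-compute step; once the squared identity is in hand, the functional-calculus upgrade and the assembly of the two rows of \eqref{eqnnecessary} are routine.
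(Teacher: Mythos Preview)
Your proposal is correct and follows essentially the same route as the paper: \eqref{necessary1} is obtained from Theorem \ref{bisai} by conjugating the top block with $V_1$; the top row of \eqref{eqnnecessary} is Lemma \ref{BisaiPal}; and the bottom row is reduced to the squared identity $\Delta_P^2 M_{\gamma_j}=\Delta_P^2 M_{A_j+zA_{n-j}^*}$, which the paper then verifies by an explicit Fourier-coefficient comparison (expanding $\Delta_P(t)^2=I-\Theta_P(e^{it})^*\Theta_P(e^{it})$ via the power series of $\Theta_P$, with $\mathcal A_*$ emerging from telescoping sums) using exactly the ingredients you name, Lemma \ref{lem} together with Lemmas \ref{BPlem1} and \ref{BPlem}. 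The only differences are cosmetic: the paper passes from $\Delta_P^2$ to $\Delta_P$ by the one-line observation that $M_{\gamma_j}$ commutes with $\Delta_P$ and $\ker\Delta_P^2=\ker\Delta_P$ (as $\Delta_P\ge 0$) rather than your functional-calculus approximation of $\sqrt{x}$, and it performs the coefficient computation directly on $\mathcal D_P$ rather than phrasing it as a ``transport of Lemma \ref{lem} across $\mathfrak U$''.
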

\begin{proof}
	By Lemma \ref{BisaiPal}, we have for each $j=1, \dots, n-1$
	\begin{equation}\label{relation}
		M_{B_j^* + z B_{n-j}}M_{\Theta_P}=M_{\Theta_P}M_{A_j+zA_{n-j}^*}.
	\end{equation}
	Now we have to prove that
	\begin{equation}\label{relation1}
		M_{{{n-1}\choose {j}}+{{n-1}\choose {j-1}}e^{it}}\Delta_P=\Delta_P M_{A_j + z A_{n-j}^*}.
	\end{equation}
	Since $M_{{{n-1}\choose {j}}+{{n-1}\choose {j-1}}e^{it}}$ commutes with $\Delta_P$ and $\Delta_P$ is non-negative, it suffices to prove that
	\begin{equation}\label{relation2}
		\Delta_P^2 M_{{{n-1}\choose {j}}+{{n-1}\choose {j-1}}e^{it}}= \Delta_P^2 M_{A_j + z A_{n-j}^*}.
	\end{equation}
	
\noindent	Notice that (for simplification see the Appendix)
	\begingroup
	\allowdisplaybreaks
	\begin{align}\label{expression1}
		&\Delta_P(t)^2\left[{{n-1}\choose {j}}+{{n-1}\choose {j-1}}e^{it}\right]\notag\\
		=&\left[{{n-1}\choose{j}}D_P\mathcal{A}_*D_P+{{n-1}\choose{j-1}}D_PP\mathcal{A}_*D_P\right] \notag\\
		&+ \sum\limits_{m=1}^{\infty}e^{imt}\left[{{n-1}\choose {j-1}}D_P\mathcal{A}_*P^{*(m-1)}D_P+{{n-1}\choose{j}}D_P\mathcal{A}_*P^{*m}D_P\right]\notag\\
		&+\sum\limits_{m=-\infty}^{-1}e^{imt}\left[{{n-1}\choose{j}}D_PP^{-m}\mathcal{A}_*D_{P}+{{n-1}\choose {j-1}}D_PP^{-m+1}\mathcal{A}_*D_P\right].
	\end{align}
	\endgroup

\noindent	Again (see the Appendix)
	\begingroup
	\allowdisplaybreaks
	\begin{align}\label{expression2}
		& \Delta_P(t)^2(A_j+e^{it}A_{n-j}^*)\notag\\
		=& \left[D_P^2A_j-D_PS_jD_P+D_PD_{P^*}B_{n-j}P+D_PP\mathcal{A}_*S_{n-j}^*D_P\right]\notag\\
		&+e^{it}\left[A_{n-j}^*D_P^2+P^*B_j^*D_{P^*}D_{P}-D_PS_{n-j}^*D_P+D_P\mathcal{A}_*S_{n-j}^*D_P\right]\notag\\
		&+\sum\limits_{m=2}^{\infty}e^{imt}D_P\mathcal{A}_*P^{*(m-1)}S_{n-j}^*D_P + \sum\limits_{m=-\infty}^{-1}e^{imt}D_PP^{-m+1}\mathcal{A}_*S_{n-j}^*D_P.
	\end{align}
	\endgroup
	Suppose $\mathcal{I}_m$ and $\mathcal{J}_m$ denote the coefficients of $e^{imt}$ in the right hand side of Equations \eqref{expression1} and \eqref{expression2} respectively. Clearly $\mathcal{I}_m,\,\mathcal{J}_m\in \mathcal{B}(\mathcal{D}_P)$. Now 
	\begingroup
	\allowdisplaybreaks
	\begin{align*}
		\mathcal{J}_0D_P=&D_P^2A_jD_P-D_PS_jD_P^2+D_PD_{P^*}B_{n-j}PD_P+D_PP\mathcal{A}_*S_{n-j}^*D_P^2\\
		=&D_P(D_PA_jD_P)-D_{P}S_j(I-P^*P)+D_P(D_{P^*}B_{n-j}D_{P^*})P+D_PP\mathcal{A}_*S_{n-j}^*D_P^2\\
		=&D_P(S_j-S_{n-j}^*P)-D_PS_j(I-P^*P)+D_P(S_{n-j}^*-S_jP^*)P+D_PP\mathcal{A}_*S_{n-j}^*D_P^2\\
		=&D_PP\mathcal{A}_*S_{n-j}^*D_P^2\\
		=&{{n-1} \choose {j-1}}D_PP\mathcal{A}_*D_P^2 + {{n-1} \choose {j}}D_P\mathcal{A}_*D_P^2\quad (\text{by Lemma }\ref{lem})\\
		=&\mathcal{I}_0D_P.
	\end{align*}
	\endgroup
	Since $\mathcal{I}_0, \,\mathcal{J}_0\in\mathcal{B}(\mathcal{D}_P)$, therefore, $\mathcal{I}_0=\mathcal{J}_0$. \\
	Again
	\begingroup
	\allowdisplaybreaks
	\begin{align*}
		&D_P\mathcal{J}_1\\=&D_PA_{n-j}^*D_P^2+D_PP^*B_j^*D_{P^*}D_{P}-D_P^2S_{n-j}^*D_P+D_P^2\mathcal{A}_*S_{n-j}^*D_P\\
		=&(D_PA_{n-j}^*D_P)D_P+P^*(D_{P^*}B_j^*D_{P^*})D_P-(I-P^*P)S_{n-j}^*D_P
		 + D_P^2\mathcal{A}_*S_{n-j}^*D_P\\
		=&(S_{n-j}-S_j^*P)^*D_P+P^*(S_j^*-S_{n-j}P^*)^*D_P-S_{n-j}^*D_P+P^*PS_{n-j}^*D_P+D_P^2\mathcal{A}_*S_{n-j}^*D_P\\
		=&D_P^2\mathcal{A}_*S_{n-j}^*D_P\\
		=&{{n-1}\choose{j-1}}D_P^2\mathcal{A}_*D_P+{{n-1}\choose{j}}D_P^2\mathcal{A}_*P^*D_P\\
		=&D_P\mathcal{I}_1.
	\end{align*}
	\endgroup
	Therefore, if $T=\mathcal{J}_1-\mathcal{I}_1$, then $T:\mathcal{D}_P\to \mathcal{D}_P$ and $D_PT=0$. Now 
	\[
	\langle TD_Ph, D_Ph' \rangle=\langle D_PTh,h' \rangle=0 \quad \text{for all }h,h'\in \mathcal{H}.
	\]
	This implies that $T=0$ and thus $\mathcal{I}_1=\mathcal{J}_1$.\\
	
	For $m\geq 2$
	\begingroup
	\allowdisplaybreaks
	\begin{align*}
	\mathcal{I}_m=&{{n-1}\choose {j-1}}D_P\mathcal{A}_*P^{*(m-1)}D_P+{{n-1}\choose{j}}D_P\mathcal{A}_*P^{*m}D_P\\
	=&D_P\left[{{n-1}\choose {j-1}}\mathcal{A}_*+ {{n-1}\choose{j}}\mathcal{A}_*P^{*}\right]P^{*(m-1)}D_P\\
	=&D_P\mathcal{A}_*S_{n-j}^*P^{*(m-1)}D_P\quad (\text{by Lemma }\ref{lem})\\
	=&\mathcal{J}_m.
	\end{align*}
	\endgroup
	Further, for $m\leq -1$,
	\begingroup
	\allowdisplaybreaks
	\begin{align*}
		\mathcal{I}_m=&{{n-1}\choose{j}}D_PP^{-m}\mathcal{A}_*D_{P}+{{n-1}\choose {j-1}}D_PP^{-m+1}\mathcal{A}_*D_P\\
		=&D_PP^{-m}\left[{{n-1}\choose{j}}\mathcal{A}_*+{{n-1}\choose{j-1}}P\mathcal{A}_*\right]D_P\\
		=&D_PP^{-m+1}\mathcal{A}_*S_{n-j}^*D_P\quad (\text{by Lemma }\ref{lem})\\
		=&\mathcal{J}_m.
	\end{align*}
	\endgroup
	Therefore, $\mathcal{I}_m=\mathcal{J}_m$ for all $m$. Hence
	\[
	\Delta_P^2M_{{{n-1}\choose {j}}+{{n-1}\choose {j-1}}e^{it}}=\Delta_P^2M_{A_j+zA_{n-j}^*},
	\]
	which implies that 
	\begin{equation}\label{eqnmain1}
	M_{{{n-1}\choose {j}}+{{n-1}\choose {j-1}}e^{it}}\Delta_P=\Delta_PM_{A_j+zA_{n-j}^*}.
	\end{equation}
	Therefore, combining Equations \eqref{relation} and \eqref{eqnmain1} we obtain Equation \eqref{eqnnecessary}.\\
	
	It is obvious from Equation \eqref{eqn7} that $\textit{Ran}(\varphi_2)=\mathcal{H}_P=\mathcal{Q}_P^{\perp}$ is invariant under $$\begin{pmatrix}
	M_{\widetilde{G}_j^*+z\widetilde{G}_{n-j}} & 0\\
	0 & M_{{{n-1}\choose {j}}+{{n-1}\choose {j-1}}e^{it}}
	\end{pmatrix}^*.$$ Then by Lemma \ref{lemmain}, there exists a $\Gamma_n$-isometry $(M_{Y_1+zY_{n-1}^*}, \dots, M_{Y_{n-1}+zY_1^*},M_z)$ on $H^2(\mathcal{D}_P)$ such that for each $i=1, \dots, n-1$
	\[
	\begin{pmatrix}
	M_{\widetilde{G}_j^*+z\widetilde{G}_{n-j}} & 0\\
	0 & M_{{{n-1}\choose {j}}+{{n-1}\choose {j-1}}e^{it}}
	\end{pmatrix}\begin{pmatrix}
	M_{\Theta_P}\\
	\Delta_P
	\end{pmatrix}=\begin{pmatrix}
	M_{\Theta_P}\\
	\Delta_P
	\end{pmatrix}M_{Y_j+zY_{n-j}^*}.
	\] 
	From Theorem \ref{bisai}, we have $\widetilde{G}_i=V_1^*B_iV_1$, where $V_1$ is as in \eqref{eqnU}. Therefore,
	\[\begin{pmatrix}
		M_{V_1^*B_j^*V_1 + z V_1^*B_{n-j}V_1} & 0\\
		0 & M_{{{n-1}\choose {j}}+{{n-1}\choose {j-1}}e^{it}}
	\end{pmatrix}\begin{pmatrix}
	M_{\Theta_P}\\
	\Delta_P
	\end{pmatrix}=\begin{pmatrix}
	M_{\Theta_P}\\
	\Delta_P
	\end{pmatrix}M_{Y_j+zY_{n-j}^*},
	\]
	that is,
	\[
	\begin{pmatrix}
	M_{B_j^* + z B_{n-j}} & 0\\
	0 & M_{{{n-1}\choose {j}}+{{n-1}\choose {j-1}}e^{it}}
	\end{pmatrix}\begin{pmatrix}
	M_{V_1}M_{\Theta_P}\\
	\Delta_P
	\end{pmatrix}=\begin{pmatrix}
	M_{V_1}M_{\Theta_P}\\
	\Delta_P
	\end{pmatrix}M_{Y_j+zY_{n-j}^*}.
	\]
\end{proof}

A straight-forward corollary of Theorem \ref{necessarytheorem} is the following.
\begin{cor}\label{bappacor}
	Let $(S_1, \dots, S_{n-1},P)$ be a c.n.u $\Gamma_n$-contraction on a Hilbert space $\mathcal{H}$ with $(A_1,\dots, A_{n-1})$ and $(B_1, \dots, B_{n-1})$ being the $\ft$-tuples of $(S_1, \dots, S_{n-1},P)$ and $(S_1^*,\dots,\\S_{n-1}^*,P^*)$ such that 
	\[
	\varphi_2S_j^*=\begin{pmatrix}
	M_{V_1^*B_j^*V_1+zV_1^*B_{n-j}V_1}& 0\\
	0 & M_{{{n-1}\choose {j}}+{{n-1}\choose{j-1}}e^{it}}
	\end{pmatrix}^*\varphi_2,
	\]
	where $V_1$ is as in \eqref{eqnU}. Suppose $\Sigma_2(z)$ is $\Gamma_{n-1}$-contractions for all $z\in \mathbb D$. Then Equation \eqref{eqnnecessary} holds.
\end{cor}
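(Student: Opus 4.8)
The plan is to recognise that the displayed intertwining is precisely the representation \eqref{eqn7} of $S_j$, with $(1,1)$-block symbol $V_1^*B_j^*V_1+zV_1^*B_{n-j}V_1$ (so $\widetilde{G}_j=V_1^*B_jV_1$) and $(2,2)$-block $S_{j2}=M_{{n-1 \choose j}+{n-1 \choose j-1}e^{it}}$, and then to re-run only the first half of the proof of Theorem \ref{necessarytheorem}, the part producing \eqref{eqnnecessary}. The crucial point is that this portion never uses the assumption that $\Sigma_1(z)$ is a $\Gamma_{n-1}$-contraction: in Theorem \ref{necessarytheorem} the sole role of $\Sigma_1$, through Theorem \ref{bisai}, was to supply the identity $\widetilde{G}_j=V_1^*B_jV_1$ together with the separate ``Moreover'' conclusion \eqref{necessary1}. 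The former is now built into the hypothesis and the latter is not asserted here, so $\Sigma_1$ may be dropped. The surviving hypothesis $\Sigma_2$ I would use, via Theorem \ref{gammaiso}, to guarantee that the $(1,1)$-block $(M_{B_1^*+zB_{n-1}},\dots,M_{B_{n-1}^*+zB_1},M_z)$ on $H^2(\mathcal{D}_{P^*})$ is a pure $\Gamma_n$-isometry, so that the assumed identity is a genuine conditional dilation of the form \eqref{eqn7}.

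With that secured I would invoke Lemma \ref{lem}. Its proof uses only the representation \eqref{eqn7}, the general c.n.u.\ relations $\mathfrak{U}\varphi_2=\varphi_1$ and $U^*=V_2M_{e^{it}}V_2^*$ from \eqref{eqn10} and \eqref{eqnU}, and the resulting description $\mathcal{A}_*=\varphi_1^*(0\oplus I)\varphi_1$ of the strong limit of $P^nP^{*n}$; moreover its conclusion is insensitive to the precise $(1,1)$-block, that block being annihilated by $0\oplus I$. Hence the two operator identities
\[
{n-1 \choose j-1}\mathcal{A}_*+{n-1 \choose j}\mathcal{A}_*P^*=\mathcal{A}_*S_{n-j}^*,\qquad {n-1 \choose j-1}P\mathcal{A}_*+{n-1 \choose j}\mathcal{A}_*=P\mathcal{A}_*S_{n-j}^*
\]
of Lemma \ref{lem} hold verbatim under the present hypotheses.

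Finally I would reproduce the computation of \eqref{eqnnecessary}. Its $\Theta_P$-component, $M_{B_j^*+zB_{n-j}}M_{\Theta_P}=M_{\Theta_P}M_{A_j+zA_{n-j}^*}$, is exactly Lemma \ref{BisaiPal}, valid for any $\Gamma_n$-contraction and free of any $\Sigma$-hypothesis. For the $\Delta_P$-component I would expand $\Delta_P(t)^2\big[{n-1 \choose j}+{n-1 \choose j-1}e^{it}\big]$ and $\Delta_P(t)^2(A_j+e^{it}A_{n-j}^*)$ into the Fourier series \eqref{expression1} and \eqref{expression2}, then match the coefficients $\mathcal{I}_m$ and $\mathcal{J}_m$ of $e^{imt}$; the two identities of Lemma \ref{lem} force $\mathcal{I}_m=\mathcal{J}_m$ for every $m$, the cases $m=0,1$ being dispatched by the sandwiching trick that upgrades $D_PT=0$ to $T=0$. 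This yields $\Delta_P^2M_{{n-1 \choose j}+{n-1 \choose j-1}e^{it}}=\Delta_P^2M_{A_j+zA_{n-j}^*}$, hence the $\Delta_P$-intertwining \eqref{relation1}, and combining it with the $\Theta_P$-component gives \eqref{eqnnecessary}. The one genuine point of care — the ``main obstacle'' — is the bookkeeping of the first paragraph: one must verify that $\Sigma_1$ enters Theorem \ref{necessarytheorem} only through the two channels the corollary has circumvented, so that the weaker hypothesis truly suffices.
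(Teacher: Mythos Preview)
Your proposal is correct and matches the paper's own approach: the paper simply says ``one can prove it easily if follows the proof of Theorem \ref{necessarytheorem}'', and your outline does precisely that, carefully identifying that the displayed hypothesis supplies the representation \eqref{eqn7} with the specific $S_{j2}$ directly (so that Lemma \ref{lem} applies), and then rerunning the Fourier-coefficient matching via \eqref{expression1}, \eqref{expression2} and Lemma \ref{BisaiPal}. Your bookkeeping observation that $\Sigma_1$ enters only through Theorem \ref{bisai}—whose output is now part of the hypothesis—and through the ``Moreover'' clause not asserted here, is exactly the point.
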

\begin{proof}
	One can prove it easily if follows the proof of Theorem \ref{necessarytheorem}.
\end{proof}

The Theorem \ref{necessarytheorem} provides a necessary condition on $(A_1, \dots, A_{n-1})$ and $(B_1, \dots, B_{n-1})$ to be the $\ft$-tuples of $(S_1, \dots, S_{n-1},P)$ and $(S_1^*,\dots, S_{n-1}^*,P^*)$, respectively. It is natural to ask about sufficiency, that is, for given two tuples of operators $(A_1, \dots, A_{n-1})$ and $(B_1, \dots, B_{n-1})$ defined on two Hilbert spaces, under what conditions there exists a c.n.u $\Gamma_n$-contraction $(S_1, \dots, S_{n-1},P)$ such that $(A_1, \dots, A_{n-1})$ becomes the $\ft$-tuple of $(S_1, \dots, S_{n-1},P)$ and $(B_1, \dots, B_{n-1})$ becomes the $\ft$-tuple of $(S_1^*, \dots, S_{n-1}^*,P^*)$.
\begin{thm}\label{sufficient}
	Let $P$ be a c.n.u contraction on a Hilbert space $\mathcal{H}$. Let $A_1, \dots, A_{n-1}\in \mathcal{B}(\mathcal{D}_P)$ and $B_1, \dots, B_{n-1}\in \mathcal{B}(\mathcal{D}_{P^*})$ be such that they satisfy Equations $\eqref{eqnnecessary}$ and \eqref{necessary1}. Suppose $\Sigma_2(z)$ is $\Gamma_{n-1}$-contraction for all $z\in \mathbb D$. Then there exists a $\Gamma_n$-contraction $(S_1, \dots, S_{n-1},P)$ such that $(A_1, \dots, A_{n-1})$ becomes the $\ft$-tuple of $(S_1, \dots, S_{n-1},P)$ and $(B_1, \dots, B_{n-1})$ becomes the $\ft$-tuple of $(S_1^*, \dots, S_{n-1}^*,P^*)$.
\end{thm}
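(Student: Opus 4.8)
The plan is to realize the desired tuple as the compression to $\mathcal H$ of an explicit $\Gamma_n$-isometry living on the Sz.-Nagy--Foias model space $\mathbf K=H^2(\mathcal D_{P^*})\oplus\overline{\Delta_P L^2(\mathcal D_P)}$ of the given c.n.u.\ contraction $P$, thereby reversing the construction behind Theorem~\ref{necessarytheorem}. Let $\varphi_1,\varphi_2$ and the unitary $\mathfrak U$ of \eqref{eqnU}, with diagonal blocks $V_1\in\mathcal B(\mathcal D_{P^*})$ and $V_2$, all be as in Section~\ref{background}; these are determined by $P$ alone. The crucial design choice is to dilate with the \emph{$V_1$-twisted} symbols $\widetilde G_j:=V_1^*B_jV_1$ rather than with $B_j$, so I set, on $\mathbf K$,
\[
\mathbf M_j=\begin{pmatrix} M_{\widetilde G_j^*+z\widetilde G_{n-j}} & 0\\ 0 & M_{\binom{n-1}{j}+\binom{n-1}{j-1}e^{it}}\end{pmatrix},\qquad \mathbf N=\begin{pmatrix} M_z & 0\\ 0 & M_{e^{it}}\end{pmatrix},\qquad 1\le j\le n-1.
\]

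First I would verify that $(\mathbf M_1,\dots,\mathbf M_{n-1},\mathbf N)$ is a $\Gamma_n$-isometry. Its first block on $H^2(\mathcal D_{P^*})$ is a pure $\Gamma_n$-isometry: the pointwise tuple $V_1^*\Sigma_2(z)V_1$ is a $\Gamma_{n-1}$-contraction for each $z\in\mathbb D$ (a unitary conjugate of the hypothesis $\Sigma_2(z)$), the Toeplitz identity $M_{\widetilde G_j^*+z\widetilde G_{n-j}}=M_{\widetilde G_{n-j}^*+z\widetilde G_j}^*M_z$ holds, and $M_z$ is a pure isometry, so Theorem~\ref{gammaiso} applies. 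The second block equals $\pi_n(I,\dots,I,M_{e^{it}})$ (with $n-1$ copies of $I$), since $\binom{n-1}{j}+\binom{n-1}{j-1}e^{it}$ is the $j$-th elementary symmetric function of $(1,\dots,1,e^{it})$, hence it is a $\Gamma_n$-unitary by Theorem~\ref{gammauni}; a direct sum of a pure $\Gamma_n$-isometry and a $\Gamma_n$-unitary is a $\Gamma_n$-isometry. Next, multiplying \eqref{necessary1} on the left by $\mathrm{diag}(M_{V_1^*},I)$ and using $\widetilde G_j^*+z\widetilde G_{n-j}=V_1^*(B_j^*+zB_{n-j})V_1$ yields
\[
\mathbf M_j\begin{pmatrix} M_{\Theta_P}\\ \Delta_P\end{pmatrix}=\begin{pmatrix} M_{\Theta_P}\\ \Delta_P\end{pmatrix}M_{Y_j+zY_{n-j}^*},
\]
i.e.\ $\mathcal Q_P$ (the range of the isometry $\begin{pmatrix}M_{\Theta_P}\\ \Delta_P\end{pmatrix}$) is invariant under each $\mathbf M_j$, and it is plainly invariant under $\mathbf N$. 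Thus $\mathcal H_P=\mathbf K\ominus\mathcal Q_P$ is jointly coinvariant, and I would define $S_j$ by $\varphi_2 S_j^*=\mathbf M_j^*\varphi_2$. As the $\mathbf M_j,\mathbf N$ commute and the restriction of the adjoint $\Gamma_n$-contraction $(\mathbf M_1^*,\dots,\mathbf N^*)$ to its invariant subspace $\mathcal H_P$ is again a $\Gamma_n$-contraction (the von Neumann inequality of Lemma~\ref{SPLem} passes to invariant subspaces), transporting along the unitary $\varphi_2\colon\mathcal H\to\mathcal H_P$ produces a commuting $\Gamma_n$-contraction $(S_1,\dots,S_{n-1},P)$, which is c.n.u.\ since $P$ is. By construction it satisfies the representation \eqref{eqn7} with $\widetilde G_j=V_1^*B_jV_1$.

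It remains to identify the two $\mathcal F_O$-tuples. Passing to the Douglas model via $\mathfrak U\varphi_2=\varphi_1$ gives $\varphi_1 S_j^*=(\mathfrak U\mathbf M_j\mathfrak U^*)^*\varphi_1$, whose upper symbol is $V_1\widetilde G_j^*V_1^*+zV_1\widetilde G_{n-j}V_1^*=B_j^*+zB_{n-j}$; running the computation of Lemma~\ref{lem} then yields $S_j^*-S_{n-j}P^*=D_{P^*}B_jD_{P^*}$, so $(B_1,\dots,B_{n-1})$ is the $\mathcal F_O$-tuple of $(S_1^*,\dots,S_{n-1}^*,P^*)$. Writing $(\widehat A_1,\dots,\widehat A_{n-1})$ for the $\mathcal F_O$-tuple of $(S_1,\dots,S_{n-1},P)$ itself, Corollary~\ref{bappacor}---applicable because of the representation \eqref{eqn7} just obtained together with the hypothesis $\Sigma_2(z)\in\Gamma_{n-1}$---gives \eqref{eqnnecessary} for the pair $(\widehat A_j,B_j)$. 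Comparing this with the hypothesis \eqref{eqnnecessary} for $(A_j,B_j)$ and using that $\begin{pmatrix}M_{\Theta_P}\\ \Delta_P\end{pmatrix}$ is an isometry, hence injective, forces $M_{A_j+zA_{n-j}^*}=M_{\widehat A_j+z\widehat A_{n-j}^*}$, whence $A_j=\widehat A_j$ on comparing constant Fourier coefficients. This gives the claim.

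The hard part is the bookkeeping between the two minimal isometric dilations of $P$ carried by $\mathfrak U$ and its block $V_1$: one must dilate with $V_1^*B_jV_1$ rather than $B_j$, for otherwise the adjoint $\mathcal F_O$-tuple would come out as $V_1B_jV_1^*$ instead of $B_j$, and it is precisely \eqref{necessary1}---the $V_1$-twisted companion of \eqref{eqnnecessary}---that certifies $\mathcal Q_P$-invariance for the twisted symbol. The second delicate point is the identification of $A_j$: since $\Sigma_1(z)$ is \emph{not} assumed to be a $\Gamma_{n-1}$-contraction, I avoid checking the defining relation $S_j-S_{n-j}^*P=D_PA_jD_P$ directly (via Theorem~\ref{BisaiPal1}) and instead route the argument through Corollary~\ref{bappacor} and the injectivity of the characteristic-function isometry, so that only the given condition $\Sigma_2(z)\in\Gamma_{n-1}$ is needed.
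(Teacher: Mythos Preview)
Your proposal is correct and follows essentially the same route as the paper's proof: define $S_j=\varphi_2^*W_j\varphi_2$ with the $V_1$-twisted upper block, use \eqref{necessary1} for $\mathcal Q_P$-invariance, verify the $\Gamma_n$-isometry structure of the dilation via Theorems~\ref{gammaiso} and~\ref{gammauni}, compute $S_j^*-S_{n-j}P^*=D_{P^*}B_jD_{P^*}$ after passing to $\varphi_1$, and finally pin down the $A_j$ via Corollary~\ref{bappacor} and the injectivity of $\begin{pmatrix}M_{\Theta_P}\\ \Delta_P\end{pmatrix}$. The only quibble is your pointer to ``the computation of Lemma~\ref{lem}'' for the identity $S_j^*-S_{n-j}P^*=D_{P^*}B_jD_{P^*}$; that lemma concerns the $\mathcal A_*$-relations, whereas the relevant calculation is the short $P_{\mathbb C}\otimes B_j$ argument carried out at the end of the proof of Theorem~\ref{bisai} (and repeated verbatim in the paper's proof here).
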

\begin{proof}
	Let us define 
	\[
	S_j=\varphi_2^*W_j\varphi_2 \quad \text{for }j=1, \dots, n-1,
	\]
	 where 
	 \[
	 W_j=\begin{pmatrix}
	M_{V_1^*B_j^*V_1+zV_1^*B_{n-j}V_1}& 0\\
	0 & M_{{{n-1}\choose {j}}+{{n-1}\choose{j-1}}e^{it}}
	\end{pmatrix}\quad \text{for }j=1, \dots, n-1.
	\]
	 Equation \eqref{necessary1} tells us that $\mathcal{Q}_P \left(=(\textit{Ran}(\varphi_2))^{\perp}\right)$ is invariant under $W_j$ for $j=1, \dots, n-1$, that is, $\textit{Ran}(\varphi_2)$ is invariant under $W_j^*$ for $j=1, \dots, n-1$. Also from Equation \eqref{eqn9}, $P=\varphi_2^*W\varphi_2$ and $\textit{Ran}(\varphi_2)$ is invariant under $W^*$, where $W=\begin{pmatrix}
	 M_z & 0\\
	 0 & M_{e^{it}}
	 \end{pmatrix}$. Since $\Sigma_2(z)$ is $\Gamma_{n-1}$-contraction for all $z\in \mathbb D$, by Theorem \ref{gammaiso}, $\left(M_{B_1^*+zB_{n-1}}, \dots, M_{B_{n-1}^*+zB_1},M_z\right)$ on $H^2(\mathcal{D}_{P^*})$ is a $\Gamma_n$-isometry. Therefore, $\left(M_{V_1^*B_1^*V_1+zV_1^*B_{n-1}V_1}, \dots, M_{V_1^*B_{n-1}^*V_1+zV_1^*B_{1}V_1},M_z\right)$ $(=\Omega(z), \text{ say})$ is a $\Gamma_n$-isometry on $H^2(\mathcal{D}_{P^*})$ as $V_1$ on $\mathcal{D}_{P^*}$ is a unitary. Again by Theorem \ref{gammauni}, 
	 \begingroup
	 \allowdisplaybreaks\begin{align*}
	 	&\pi_n(I,\dots,I,\dots, I,M_{e^{it}})\\
	 	=&\left(M_{{{n-1}\choose {1}}+e^{it}}, \dots,M_{{{n-1}\choose {j}}+{{n-1}\choose{j-1}}e^{it}}, \dots, M_{I+{{n-1}\choose{n-2}}e^{it}},M_{e^{it}} \right)
	 \end{align*}
	 \endgroup
	 on $\overline{\Delta_P L^2(\mathcal{D}_P)}$ is a $\Gamma_n$-unitary. Hence the tuple $(W_1, \dots, W_{n-1},W)$ is a $\Gamma_{n}$-contraction.\\
	 
	 \noindent \textbf{Claim.} $\left(S_1, \dots, S_{n-1},P\right)$ is a $\Gamma_n$-contraction on $\mathcal{H}$.\\
	 
	 \noindent\textit{Proof of Claim.} Clearly, $\left(S_1^*, \dots, S_{n-1}^*,P^*\right)$ is a commuting tuple of operators and for any $f\in \mathbb C[z_1, \dots, z_{n}]$, 
	 \[
	 f(S_1^*,\dots, S_{n-1}^*,P^*)=\varphi_2^*f(W_1^*, \dots, W_{n-1}^*,W^*)\varphi_2.
	 \]
	 Since $(W_1^*,\dots, W_{n-1}^*,W^*)$ is a $\Gamma_n$-contraction, so  
	 \[
	 	\|f(S_1^*, \dots, S_{n-1}^*,P^*)\|=\left\|\varphi_2^*f(W_1^*, \dots, W_{n-1}^*,W^*)\varphi_2\right\|
	 	\leq\|f\|_{\infty, \Gamma_n}.
	 \]
	 Therefore, by Lemma \ref{SPLem}, $(S_1^*, \dots, S_{n-1}^*,P^*)$ is a $\Gamma_n$-contraction on $\mathcal{H}$ and again by Lemma \ref{SPLem}, it follows that $(S_1, \dots, S_{n-1},P)$ is a $\Gamma_{n}$-contraction as well. This completes the proof of the claim.\\
	 
	 We now show that $(B_1, \dots, B_{n-1})$ is the $\ft$-tuple of $(S_1^*, \dots, S_{n-1}^*,P^*)$. For each $j=1, \dots, n-1$
	 \begingroup
	 \allowdisplaybreaks
	 \begin{align*}
	 	S_j^*-S_{n-j}P^*=&\varphi_2^*W_j^*\varphi_2-\varphi_2^*W_{n-j}\varphi_2\varphi_2^*W^*\varphi_2\\
	 	=&\varphi_2^*W_j^*\varphi_2-\varphi_2^*W_{n-j}W^*\varphi_2\\
	 	=&\varphi_2^*\begin{pmatrix}
	 	P_{\mathbb C}\otimes V_1^*B_jV_1 & 0\\
	 	0 & 0
	 	\end{pmatrix}\varphi_2\\
	 	=&\varphi_1^*\begin{pmatrix}
	 	P_{\mathbb C}\otimes B_j & 0\\
	 	0 & 0
	 	\end{pmatrix}\varphi_1\\
	 	=&D_{P^*}B_jD_{P^*},
	 \end{align*}
	 \endgroup
	 where $P_{\mathbb C}$ is the orthogonal projection from $H^2(\mathbb D)$ onto the subspace consisting of constant functions in $H^2(\mathbb D)$. Since $\ft$-tuple of a $\Gamma_n$-contraction is unique, therefore, $(B_1, \dots, B_{n-1})$ is the $\ft$-tuple of $(S_1^*, \dots, S_{n-1}^*,P^*)$. \\
	 
	 Suppose $(Y_1, \dots, Y_{n-1})$ is the $\ft$-tuple of $(S_1, \dots, S_{n-1},P)$. Then by Corollary \ref{bappacor}, we have 
	 \begin{equation}\label{eqnnecessary1}
	 \begin{pmatrix}
	 M_{B_j^* + z B_{n-j}} & 0\\
	 0 & M_{{{n-1}\choose {j}}+{{n-1}\choose {j-1}}e^{it}}
	 \end{pmatrix}\begin{pmatrix}
	 M_{\Theta_P}\\
	 \Delta_P
	 \end{pmatrix}=\begin{pmatrix}
	 M_{\Theta_P}\\
	 \Delta_P
	 \end{pmatrix}M_{Y_j+zY_{n-j}^*}.
	 \end{equation}
	 Then from Equation \eqref{eqnnecessary} and Equation \eqref{eqnnecessary1}, we have for each $j=1, \dots, n-1$ that
	 \begin{equation}\label{eqnnecessary2}
	 	\begin{pmatrix}
	 	M_{\Theta_P}\\
	 	\Delta_P
	 	\end{pmatrix}M_{A_j+zA_{n-j}^*}=\begin{pmatrix}
	 	M_{\Theta_P}\\
	 	\Delta_P
	 	\end{pmatrix}M_{Y_j+zY_{n-j}^*}.
	 \end{equation}
	 Now from Equation \eqref{eqnnecessary2} and the fact that $\begin{pmatrix}
	 M_{\Theta_P}\\
	 \Delta_P
	 \end{pmatrix}$ is an isometry we have that
	  $$A_j+zA_{n-j}^*=Y_j+zY_{n-j}^* \quad \text{for all }j=1, \dots, n-1$$
	  and for all $z \in \mathbb D$. Therefore, $Y_j=A_j$ for all $j$ and hence $(A_1, \dots, A_{n-1})$ is the $\ft$-tuple of $(S_1, \dots, S_{n-1},P)$. The proof is complete.
\end{proof}

Combining Theorems \ref{necessarytheorem} \& \ref{sufficient}, we get the following theorem which is one of the main results of this paper.
\begin{thm}\label{mainthm}
	Let $(S_1, \dots, S_{n-1},P)$ be a c.n.u $\Gamma_n$-contraction on a Hilbert space $\mathcal{H}$ with $(A_1, \dots, A_{n-1})$ and $(B_1, \dots, B_{n-1})$ being the $\ft$-tuples of $(S_1, \dots, S_{n-1},P)$ and $(S_1^*, \dots,\\ S_{n-1}^*,P^*)$ such that $S_{j2}={{n-1}\choose {j}}I+M_{{{n-1}\choose {j-1}}e^{it}}$ in the representation \eqref{eqn7} of $S_{j}$. Suppose $\Sigma_1(z)$ and $\Sigma_2(z)$ are $\Gamma_{n-1}$-contractions for all $z\in \mathbb{D}$. Then 
	\begin{equation}\label{Eqnnecessary}
	\begin{pmatrix}
	M_{B_j^* + z B_{n-j}} & 0\\
	0 & M_{{{n-1}\choose {j}}+{{n-1}\choose {j-1}}e^{it}}
	\end{pmatrix}\begin{pmatrix}
	M_{\Theta_P}\\
	\Delta_P
	\end{pmatrix}=\begin{pmatrix}
	M_{\Theta_P}\\
	\Delta_P
	\end{pmatrix}M_{A_j+zA_{n-j}^*}.
	\end{equation}
	Moreover, if $V_1$ is as in \eqref{eqnU}, then there exists a $\Gamma_n$-isometry $(M_{Y_1+zY_{n-1}^*}, \dots, M_{Y_{n-1}+zY_1^*},\\M_z)$ on $H^2(\mathcal{D}_P)$ such that 
	\begin{equation}\label{Necessary1}
	\begin{pmatrix}
	M_{B_j^* + zB_{n-j}} & 0\\
	0 & M_{{{n-1}\choose {j}}+{{n-1}\choose {j-1}}e^{it}}
	\end{pmatrix}\begin{pmatrix}
	M_{V_1}M_{\Theta_P}\\
	\Delta_P
	\end{pmatrix}=\begin{pmatrix}
	M_{V_1}M_{\Theta_P}\\
	\Delta_P
	\end{pmatrix}M_{Y_j+zY_{n-j}^*}.
	\end{equation}
	
	Conversely, if $P$ is a c.n.u contraction on a Hilbert space $\mathcal{H}$ and $A_1, \dots, A_{n-1}\in \mathcal{B}(\mathcal{D}_P)$ and $B_1, \dots, B_{n-1}\in \mathcal{B}(\mathcal{D}_{P^*})$ are such that they satisfy Equations \eqref{Eqnnecessary} $\&$ \eqref{Necessary1} with $\Sigma_2(z)$ is $\Gamma_{n-1}$-contraction for all $z\in \mathbb D$, then there exists a $\Gamma_n$-contraction $(S_1, \dots, S_{n-1},P)$ such that $(A_1, \dots, A_{n-1})$ becomes the $\ft$-tuple of $(S_1, \dots, S_{n-1},P)$ and $(B_1, \dots, B_{n-1})$ becomes the $\ft$-tuple of $(S_1^*, \dots, S_{n-1}^*,P^*)$.
\end{thm}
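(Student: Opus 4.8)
The plan is to recognize that Theorem~\ref{mainthm} is nothing more than the amalgamation of the necessary condition proved in Theorem~\ref{necessarytheorem} with the sufficient condition proved in Theorem~\ref{sufficient}. Consequently, I would structure the proof as two short invocations, after verifying that the hypotheses of the statement match those of the two source theorems, rather than performing any fresh computation.

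For the forward implication, I would note that the assumptions in the first half of Theorem~\ref{mainthm}---that $(S_1,\dots,S_{n-1},P)$ is a c.n.u $\Gamma_n$-contraction with $\ft$-tuples $(A_1,\dots,A_{n-1})$ and $(B_1,\dots,B_{n-1})$ of itself and its adjoint, that $S_{j2}={{n-1}\choose {j}}I+M_{{{n-1}\choose {j-1}}e^{it}}$ in the representation \eqref{eqn7}, and that $\Sigma_1(z)$ and $\Sigma_2(z)$ are $\Gamma_{n-1}$-contractions for every $z\in\mathbb D$---are exactly those of Theorem~\ref{necessarytheorem}. Since \eqref{Eqnnecessary} and \eqref{Necessary1} coincide with \eqref{eqnnecessary} and \eqref{necessary1}, the conclusion of this half follows verbatim from Theorem~\ref{necessarytheorem}.

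For the converse, I would observe that the standing hypotheses---$P$ a c.n.u contraction, $A_1,\dots,A_{n-1}\in\mathcal B(\mathcal D_P)$ and $B_1,\dots,B_{n-1}\in\mathcal B(\mathcal D_{P^*})$ satisfying \eqref{Eqnnecessary} and \eqref{Necessary1}, and $\Sigma_2(z)$ a $\Gamma_{n-1}$-contraction for all $z\in\mathbb D$---are precisely the hypotheses of Theorem~\ref{sufficient}. Invoking that theorem produces a $\Gamma_n$-contraction $(S_1,\dots,S_{n-1},P)$ having $(A_1,\dots,A_{n-1})$ as the $\ft$-tuple of $(S_1,\dots,S_{n-1},P)$ and $(B_1,\dots,B_{n-1})$ as the $\ft$-tuple of $(S_1^*,\dots,S_{n-1}^*,P^*)$, which is the asserted conclusion.

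The main (and essentially only) point demanding care is bookkeeping: one must confirm that the two displayed systems of equations in Theorem~\ref{mainthm} are term-for-term identical to those established in the source theorems, and that the converse genuinely needs only $\Sigma_2(z)$---not $\Sigma_1(z)$---to be a $\Gamma_{n-1}$-contraction, in accordance with Theorem~\ref{sufficient}. All the substantive analytic labour, namely the Fourier-coefficient identification carried out in the proof of Theorem~\ref{necessarytheorem} via Lemma~\ref{lem} and the dilation-theoretic realization built in the proof of Theorem~\ref{sufficient}, has already been discharged, so no new estimate or construction is required here.
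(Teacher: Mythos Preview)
Your proposal is correct and matches the paper's approach exactly: the paper explicitly introduces Theorem~\ref{mainthm} with the sentence ``Combining Theorems \ref{necessarytheorem} \& \ref{sufficient}, we get the following theorem'' and provides no further proof. Your verification that the hypotheses and displayed equations line up with those of the two source theorems is precisely the bookkeeping the paper leaves implicit.
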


\section{Results about $\mathbb E$-contractions}

This section is devoted to prove a theorem for c.n.u $\mathbb E$-contractions which is analogue to the Theorem \ref{mainthm} for c.n.u $\Gamma_n$-contractions.

Before proceeding further, we recall a few results from literature which will be useful to prove the main result of this section.
\begin{lem}[\cite{T:B}, Corollary 4.2]\label{Fopair1}
	The fundamental operators $F_1$ and $F_2$ of a tetrablock contraction $(A,B,P)$ are the unique bounded linear operators on $\mathcal{D}_P$ that satisfy the pair of operator equations
	\[
	D_PA=X_1D_P+X_2^*D_PP \quad\text{ and }\quad D_PB=X_2D_P+X_1^*D_PP.
	\]
\end{lem}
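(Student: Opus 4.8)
The plan is to prove the two halves of the statement in turn: first that the fundamental operators $F_1,F_2$, characterized by $A-B^*P=D_PF_1D_P$ and $B-A^*P=D_PF_2D_P$, do solve the displayed pair of equations, and second that this pair admits no other solution on $\mathcal{D}_P$. Throughout I would use that $(A,B,P)$ is a commuting triple (so $AP=PA$), that $D_P^2=I-P^*P$, and that $\mathrm{Ran}\,D_P$ is dense in $\mathcal{D}_P$.

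For the existence half, I would show that $D_PA-F_1D_P-F_2^*D_PP$ vanishes by checking it is simultaneously valued in $\mathcal{D}_P$ and orthogonal to $\mathcal{D}_P$. Membership in $\mathcal{D}_P$ is term-by-term, since $F_1,F_2\in\mathcal{B}(\mathcal{D}_P)$. For orthogonality I would evaluate $\langle(D_PA-F_1D_P-F_2^*D_PP)h,\,D_Ph'\rangle$ for arbitrary $h,h'\in\mathcal{H}$, using $\langle D_PAh,D_Ph'\rangle=\langle Ah,h'\rangle-\langle PAh,Ph'\rangle$, the defining relation $D_PF_1D_P=A-B^*P$, and its adjoint companion $D_PF_2^*D_P=B^*-P^*A$ (obtained by taking adjoints in $B-A^*P=D_PF_2D_P$). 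A short cancellation collapses this inner product to $\langle(AP-PA)h,\,Ph'\rangle$, which is $0$ by commutativity. Density of $\mathrm{Ran}\,D_P$ in $\mathcal{D}_P$ then forces the operator to be $0$, giving the first equation; the second follows verbatim after the interchange $(A,F_1)\leftrightarrow(B,F_2)$, under which the defining relations are symmetric.

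For uniqueness I would take a pair $(X_1,X_2)$ solving the homogeneous system $X_1D_P+X_2^*D_PP=0$ and $X_2D_P+X_1^*D_PP=0$, and prove $X_1=X_2=0$. Left-multiplying the first equation by $D_P$ and substituting the adjoint of the second, namely $D_PX_2^*=-P^*D_PX_1$, yields $C=P^*CP$ for $C:=D_PX_1D_P$, and hence $C=P^{*n}CP^{n}$ for all $n$. The hard part is precisely here: the bare identity $C=P^*CP$ does \emph{not} force $C=0$ for a non-pure $P$, so one must exploit the special shape $C=D_PX_1D_P$. Writing $\langle Ch,h\rangle=\langle X_1D_PP^{n}h,\,D_PP^{n}h\rangle$ and noting $\|D_PP^{n}h\|^2=\|P^{n}h\|^2-\|P^{n+1}h\|^2\to 0$ (the sequence $\{\|P^{n}h\|\}$ being monotone and bounded, hence Cauchy), I would conclude $\langle Ch,h\rangle=0$ for every $h$, so $C=0$ over the complex field. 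Then $\langle X_1D_Ph,D_Ph'\rangle=0$ for all $h,h'$, whence $X_1=0$ on $\mathcal{D}_P$ by density, and symmetrically $X_2=0$. Combined with the existence half, $(F_1,F_2)$ is the unique solution, completing the proof.
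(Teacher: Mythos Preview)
The paper does not supply its own proof of this lemma; it is quoted as Corollary~4.2 of \cite{T:B} and invoked without argument. Consequently there is no in-paper proof to compare your attempt against.

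That said, your proof is correct and self-contained. The existence half is a clean inner-product calculation: after expanding $\langle D_PAh,D_Ph'\rangle$ via $D_P^2=I-P^*P$ and substituting the defining relations $D_PF_1D_P=A-B^*P$ and $D_PF_2^*D_P=B^*-P^*A$, everything cancels to $\langle(AP-PA)h,Ph'\rangle=0$. For uniqueness, the key step---which you handle correctly---is that $C=D_PX_1D_P$ satisfies $C=P^{*n}CP^n$, and then $|\langle Ch,h\rangle|\le\|X_1\|\,\|D_PP^nh\|^2$ with $\|D_PP^nh\|^2=\|P^nh\|^2-\|P^{n+1}h\|^2\to 0$ for any contraction $P$, no purity needed. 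This forces $C=0$, hence $X_1=0$ on $\mathcal{D}_P$, and symmetrically $X_2=0$. Your caution that $C=P^*CP$ alone does not suffice without the factored form $C=D_PX_1D_P$ is well placed.
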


Here is an analogue of Lemma \ref{BisaiPal} in the tetrablock setting.

\begin{lem}[\cite{TirthaHari}, Lemma 17]\label{Hari}
	Let $F_1$ and $F_2$ be the fundamental operators of a tetrablock contraction $(A,B,P)$ and $G_1$ and $G_2$ be the fundamental operators of the tetrablock contraction $(A^*,B^*,P^*)$. Then for all $z\in \mathbb D$,
	\[
	(F_1^*+F_2z)\Theta_{P^*}(z)=\Theta_{P^*}(z)(G_1+G_2^*z)
	\]
	and 
	\[
	(F_2^*+F_1z)\Theta_{P^*}(z)=\Theta_{P^*}(z)(G_2+G_1^*z).
	\]
\end{lem}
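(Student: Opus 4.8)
The plan is to substitute the explicit series expansion of the characteristic function and reduce the two claimed intertwining relations to a family of coefficient-wise operator identities, each of which can be settled using only the defining equations of the fundamental operators (Lemma \ref{Fopair1}), the commutation relations $PD_P=D_{P^*}P$ and $D_PP^*=P^*D_{P^*}$, and the commutativity of $A,B,P$. Writing $(I-zP)^{-1}=\sum_{k\ge0}z^kP^k$, valid on $\mathbb{D}$ since $r(P)\le 1$, the characteristic function of $P^*$ takes the form $\Theta_{P^*}(z)=-P^*+\sum_{m\ge1}z^mD_PP^{m-1}D_{P^*}$ as a $\mathcal{B}(\mathcal{D}_{P^*},\mathcal{D}_P)$-valued analytic function. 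Both sides of the first identity are then analytic in $z$, so it suffices to match Taylor coefficients. This produces three types of identities: the constant term $F_1^*P^*=P^*G_1$; the linear term $F_1^*D_PD_{P^*}-F_2P^*=D_PD_{P^*}G_1-P^*G_2^*$; and, for $m\ge2$, the identity $F_1^*D_PP^{m-1}D_{P^*}+F_2D_PP^{m-2}D_{P^*}=D_PP^{m-1}D_{P^*}G_1+D_PP^{m-2}D_{P^*}G_2^*$.

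I would prove the constant and linear identities by a ``sandwich'' technique: rather than comparing the two sides directly, I would pre-multiply by $D_P$ and post-multiply by $D_{P^*}$, so that every fundamental operator is absorbed into a product of the form $D_PF_1D_P=A-B^*P$, $D_PF_2D_P=B-A^*P$, $D_{P^*}G_1D_{P^*}=A^*-BP^*$, $D_{P^*}G_2D_{P^*}=B^*-AP^*$ (and their adjoints), using $P^*D_{P^*}=D_PP^*$ to move defect operators past $P^*$. After this reduction the difference of the two sandwiched sides collapses to a commutator such as $P^*[B,P]P^*$ or $[A^*,P^*]$, which vanishes because $A,B,P$ commute. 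One then removes the outer defect operators by the standard density argument: $D_PXD_{P^*}=0$ with $X$ mapping $\mathcal{D}_{P^*}$ into $\mathcal{D}_P$ forces $X=0$ on $\mathcal{D}_{P^*}$.

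For the higher coefficients $m\ge2$ I would instead use a short telescoping argument that keeps powers of $P$ out of the sandwich. From Lemma \ref{Fopair1} one has $F_2D_P=D_PB-F_1^*D_PP$, so substituting this into the left-hand side cancels the two $F_1^*$-terms and leaves $D_PBP^{m-2}D_{P^*}=D_PP^{m-2}BD_{P^*}$, using $BP=PB$. On the right-hand side the adjoint of the defining equation of the fundamental operators of $(A^*,B^*,P^*)$ gives $BD_{P^*}=D_{P^*}G_2^*+PD_{P^*}G_1$, whence the right-hand side equals $D_PP^{m-2}BD_{P^*}$ as well. Thus the two sides coincide for every $m\ge2$, and together with the constant and linear cases this establishes the first intertwining relation. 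The second relation follows immediately by interchanging $A$ and $B$, which swaps $F_1\leftrightarrow F_2$ and $G_1\leftrightarrow G_2$ in all the defining equations and converts the first identity into the second.

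The main obstacle, and the point requiring the most care, is that $A^*$ and $B^*$ need not commute with $P$ --- only $A,B,P$ form a commuting triple. Consequently every manipulation must be arranged so that the only commutators invoked are $[A,P]$ and $[B,P]$; the terms that survive in the sandwich computations are precisely such commutators, which is exactly why the identities close up. A secondary technical point is justifying the passage to coefficient-wise identities, that is, that the series for $\Theta_{P^*}$ converges on $\mathbb{D}$ and that equality of all Taylor coefficients of two $\mathcal{B}(\mathcal{D}_{P^*},\mathcal{D}_P)$-valued analytic functions yields their equality throughout $\mathbb{D}$.
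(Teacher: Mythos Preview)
The paper does not supply its own proof of this lemma: it is simply quoted from \cite{TirthaHari} as Lemma~17. So there is nothing in the present paper to compare against directly.

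Your argument is correct. The Neumann series for $(I-zP)^{-1}$ converges on $\mathbb D$ because $\|zP\|\le|z|<1$ (not merely because $r(P)\le1$, which you should phrase more carefully), and comparing Taylor coefficients is legitimate since both sides are $\mathcal B(\mathcal D_{P^*},\mathcal D_P)$-valued analytic functions. The constant-term identity $F_1^*P^*=P^*G_1$ on $\mathcal D_{P^*}$ reduces, after the $D_P(\cdot)D_{P^*}$ sandwich and the relation $P^*D_{P^*}=D_PP^*$, to $[A^*,P^*]=0$; the linear term reduces to an identity that holds because $[B,P]=0$; and for $m\ge2$ your telescoping via $F_2D_P=D_PB-F_1^*D_PP$ and $BD_{P^*}=D_{P^*}G_2^*+PD_{P^*}G_1$ (the adjoint of the defining relation for $(G_1,G_2)$) is clean and correct. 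The density argument removing the outer defect operators is exactly the one used throughout the paper.

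Your approach is entirely in the spirit of the computations the paper does carry out in its Appendix (the verification of \eqref{expression1} and \eqref{expression2}), where the same series expansion of $\Theta_P$ is used and coefficients are matched term by term. If anything, your $m\ge2$ step is more economical than a direct sandwich computation, since the substitution from Lemma~\ref{Fopair1} collapses the identity to a single commutator relation without ever expanding $D_{P^*}^2$ or $D_P^2$.
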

\subsection{Necessary condition of conditional dilation for c.n.u $\Bbb E$-contractions}
\text{ }

\noindent We recall a few sentences (until Theorem 3.5) from \cite{B:P5} to recall a conditional $\mathbb E$-isometric dilation of a c.n.u $\mathbb E$-contraction $(A,B,P)$ on the minimal Sz.-Nagy and Foias isometric dilation space $\textbf{K}$ of the c.n.u contraction $P$.

Let $(A,B,P)$ be a c.n.u $\Bbb E$-contraction on a Hilbert space $\mathcal H$ with $\ft$-pair $(F_1,F_2)$. Let $[F_1,F_2]=0,\,[F_1,F_1^*]=[F_2,F_2^*]$. Then we have from Theorem 6.1 of \cite{T:B} that $(V_1, V_2, V_3)$ on $\mathcal{K}=\mathcal{H}\oplus \ell^2(\mathcal{D}_P)$ is the minimal $\mathbb E$-isometric dilation of $(A,B,P)$, where $V_3$ on $\mathcal{K}$ is the minimal isometric dilation of $P$. By Theorem 5.6 of \cite{T:B}, there is a decomposition of $\mathcal{K}$ into a direct sum $\mathcal{K}=\mathcal{K}_1 \oplus \mathcal{K}_2$ such that $\mathcal{K}_1$ and $\mathcal{K}_2$ reduce $V_1$, $V_2$, $V_3$ and $({V}_{11},{V}_{12},{V}_{13})=(V_1|_{\mathcal{K}_1}, V_2|_{\mathcal{K}_1},V_3|_{\mathcal{K}_1})$ on $\mathcal{K}_1$ is a pure $\mathbb E$-isometry while $({V}_{21},{V}_{22},{V}_{23})=(V_1|_{\mathcal{K}_2}, V_2|_{\mathcal{K}_2},V_3|_{\mathcal{K}_2})$ on $\mathcal{K}_2$ is an $\mathbb E$-unitary. Taking cue from this fact, we have the following conditional $\mathbb E$-isometric dilation for a c.n.u $\mathbb E$-contraction $(A,B,P)$.

\begin{thm}[\cite{B:P5}, Theorem 3.5]\label{commutativemodel}
	Let $(A,B,P)$ be a c.n.u $\mathbb{E}$-contraction on a Hilbert space $\mathcal{H}$ with $\ft$-pair $(F_1,F_2)$. Let $[F_1,F_2] = 0$, $[F_1,F_1^*] = [F_2,F_2^*]$. Suppose $(G_1,G_2)$ is the $\ft$-pair of $({V}_{11}^*,{V}_{12}^*,{V}_{13}^*)$. Then $(\widetilde{A}_1 \oplus \widetilde{A}_2,\widetilde{B}_1 \oplus \widetilde{B}_2,\widetilde{P}_1 \oplus \widetilde{P}_2)$ on $\textbf{K}$ is a minimal $\mathbb E$-isometric dilation of $(A,B,P)$,
	where 
	\begin{equation*}
	\left.\begin{aligned}
	\widetilde{A}_1&=I_{H^2(\mathbb D)}\otimes \mathcal V_1^* G_1^* \mathcal V_1\, +\, M_z\otimes \mathcal V_1^*G_2 \mathcal V_1\\
	\widetilde{B}_1&=I_{H^2(\mathbb D)}\otimes \mathcal V_1^* G_2^* \mathcal V_1 \,+\, M_z\otimes \mathcal V_1^* G_1 \mathcal V_1 \\
	\widetilde{P_1} &= M_z\otimes I_{\mathcal{D}_{P^*}}
	\end{aligned}
	\right\}
	\quad \text{on } \; \; H^2(\mathbb D)\otimes \mathcal{D}_{P^*}
	\end{equation*}
	and
	\[
	\widetilde{A}_2 = \mathcal V_2^*V_{21}\mathcal V_2\, , \, \widetilde{B}_2 = \mathcal V_2^*V_{22}\mathcal V_1\,, \widetilde{P_2}=M_{e^{it}}|_{\overline{\Delta_PL^2(\mathcal{D}_P)}} \quad \text{ on } \;\; \overline{\Delta_PL^2(\mathcal{D}_P)},
	\]
	for some unitaries $\mathcal V_1: \mathcal D_{P^*} \rightarrow  \mathcal D_{V_3^*}$ and $\mathcal V_2: \overline{\Delta_PL^2(\mathcal{D}_P)} \rightarrow \mathcal{K}_2$. If $\underline{S}= (S_1,S_2,S_3)$ is an $\mathbb E$-isometric dilation of $(A,B,P)$ such that $S_3$ is the minimal isometric dilation of $P$, then $\underline{S}$ is unitarily equivalent to $(\widetilde{A}_1 \oplus \widetilde{A}_2,\widetilde{B}_1 \oplus \widetilde{B}_2,\widetilde{P}_1 \oplus \widetilde{P}_2)$. Finally,
	\begingroup
	\allowdisplaybreaks
	\begin{align*}
	A &= P_{\mathcal{H}}(\widetilde{A}_1\oplus\widetilde{A}_2)|_{\mathcal{H}}\,,\\
	B & = P_{\mathcal{H}}(\widetilde{B}_1\oplus\widetilde{B}_2)|_{\mathcal{H}}\,,\\
	P & = P_{\mathcal{H}}((M_z\otimes I_{\mathcal{D}_{P^*}}) \oplus M_{e^{it}}|_{\overline{\Delta_PL^2(\mathcal{D}_P)}})|_{\mathcal{H}}\,.
	\end{align*}
	\endgroup
	As a direct consequence of the above theorem we have the following result.
\end{thm}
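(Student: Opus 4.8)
The plan is to construct the dilation from the explicit $\mathbb{E}$-isometric dilation of \cite{T:B} and then transport it onto the Sz.-Nagy--Foias minimal isometric dilation space $\textbf{K}$ of $P$, reading off the fundamental operators of the adjoint along the way. First I would invoke Theorem 6.1 of \cite{T:B}: since the hypotheses $[F_1,F_2]=0$ and $[F_1,F_1^*]=[F_2,F_2^*]$ hold, the triple $(V_1,V_2,V_3)$ on $\mathcal{K}=\mathcal{H}\oplus\ell^2(\mathcal{D}_P)$ is a minimal $\mathbb{E}$-isometric dilation of $(A,B,P)$ with $V_3$ the minimal isometric dilation of $P$. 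Next I would apply the Wold-type decomposition of $\mathbb{E}$-isometries (Theorem 5.6 of \cite{T:B}) to split $\mathcal{K}=\mathcal{K}_1\oplus\mathcal{K}_2$ into a reducing pure $\mathbb{E}$-isometry $(V_{11},V_{12},V_{13})$ and an $\mathbb{E}$-unitary $(V_{21},V_{22},V_{23})$; correspondingly $V_{13}$ is a pure isometry and $V_{23}$ is unitary by Theorem \ref{thm:ti} and Theorem \ref{thm:tu}.

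The second step is to put both parts into functional form. On the pure part, $V_{13}$ is unitarily equivalent to the shift $M_z\otimes I_{\mathcal{D}_{V_3^*}}$ on $H^2(\mathbb{D})\otimes\mathcal{D}_{V_3^*}$; by the structure theory for pure $\mathbb{E}$-isometries in \cite{T:B}, the accompanying operators $V_{11},V_{12}$ are multiplication operators whose symbols are the linear pencils $G_1^*+G_2 z$ and $G_2^*+G_1 z$ assembled from the $\ft$-pair $(G_1,G_2)$ of $(V_{11}^*,V_{12}^*,V_{13}^*)$. A unitary $\mathcal{V}_1:\mathcal{D}_{P^*}\to\mathcal{D}_{V_3^*}$ identifies the defect spaces of $P$ and of $V_3$, which after conjugation turns $V_{11},V_{12}$ into the stated $\widetilde{A}_1,\widetilde{B}_1$ on $H^2(\mathbb{D})\otimes\mathcal{D}_{P^*}$.

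Third, I would handle the unitary part and assemble. The residual unitary $V_{23}$ is precisely the unitary summand in the minimal isometric dilation of the c.n.u contraction $P$, so by uniqueness of the minimal isometric dilation it is unitarily equivalent to $M_{e^{it}}$ on $\overline{\Delta_P L^2(\mathcal{D}_P)}$ via a unitary $\mathcal{V}_2$, and conjugating $V_{21},V_{22}$ by $\mathcal{V}_2$ yields $\widetilde{A}_2,\widetilde{B}_2$. Because $V_3$ and $(M_z\otimes I_{\mathcal{D}_{P^*}})\oplus M_{e^{it}}$ are both minimal isometric dilations of the same c.n.u contraction $P$, they are unitarily equivalent, and Lemma \ref{BisaiPallem} forces the intertwiner to be block diagonal of the form $(I\otimes\mathcal{V}_1)\oplus\mathcal{V}_2$; this is exactly what lets the single pair $\mathcal{V}_1,\mathcal{V}_2$ simultaneously conjugate the whole $\mathbb{E}$-isometry and the isometric dilation of $P$. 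Hence $(\widetilde{A}_1\oplus\widetilde{A}_2,\widetilde{B}_1\oplus\widetilde{B}_2,\widetilde{P}_1\oplus\widetilde{P}_2)$ on $\textbf{K}$ is a minimal $\mathbb{E}$-isometric dilation, and compressing to $\mathcal{H}$ recovers the three displayed formulas for $A,B,P$. For the uniqueness clause, if $\underline{S}=(S_1,S_2,S_3)$ is any $\mathbb{E}$-isometric dilation with $S_3$ the minimal isometric dilation of $P$, the same block-diagonal rigidity from Lemma \ref{BisaiPallem}, together with uniqueness of $S_3$, produces the unitary equivalence with the constructed triple.

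The hard part will be the second step: extracting the precise linear-pencil form of $\widetilde{A}_1,\widetilde{B}_1$ in terms of the adjoint $\ft$-pair $(G_1,G_2)$ and correctly matching the defect spaces of $P$ and $V_3$ through $\mathcal{V}_1$. This requires the full structure theorem for pure $\mathbb{E}$-isometries from \cite{T:B} and careful bookkeeping of which fundamental operators---those of $(A,B,P)$ versus those of $(V_{11}^*,V_{12}^*,V_{13}^*)$---occupy each slot; it is exactly the commutativity and normality hypotheses $[F_1,F_2]=0$, $[F_1,F_1^*]=[F_2,F_2^*]$ that guarantee these pencils commute and genuinely define an $\mathbb{E}$-isometry rather than merely a commuting triple.
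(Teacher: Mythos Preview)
The paper does not give its own proof of this theorem; it is quoted verbatim from \cite{B:P5}, Theorem 3.5, and the only argument the present paper supplies is the short paragraph immediately preceding the statement, which sketches exactly the route you describe: invoke Theorem 6.1 of \cite{T:B} to obtain the minimal $\mathbb{E}$-isometric dilation $(V_1,V_2,V_3)$ on $\mathcal{H}\oplus\ell^2(\mathcal{D}_P)$, then apply the Wold-type decomposition of Theorem 5.6 of \cite{T:B} to split it into a pure $\mathbb{E}$-isometry and an $\mathbb{E}$-unitary. Your elaboration of the remaining steps---the functional model for the pure part via the $\ft$-pair $(G_1,G_2)$ of the adjoint, the identification of defect spaces through $\mathcal{V}_1$, the transport of the unitary part to $\overline{\Delta_P L^2(\mathcal{D}_P)}$ via $\mathcal{V}_2$, and the block-diagonal rigidity from Lemma \ref{BisaiPallem} for the uniqueness clause---is the natural completion of that sketch and matches the intended argument.
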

\begin{thm}\label{bisaipalEdilation}
	Let $(A,B,P)$ be a c.n.u $\Bbb E$-contraction on a Hilbert space $\mathcal H$ with $\ft$-pair $(F_1,F_2)$. Let $[F_1,F_2]=0,\,[F_1,F_1^*]=[F_2,F_2^*]$. Then there is an isometry $\varphi$ from $\mathcal{H}$ into $H^2(\mathcal{D}_{P^*}) \oplus \overline{\Delta_P L^2(\mathcal{D}_P)}$ such that 
	\[
	\varphi A^*= \left(M_{X_1^*+zX_2}\oplus\widetilde{A}_2\right)^*\varphi,\qquad \varphi B^*= \left(M_{X_2^*+zX_1}\oplus\widetilde{B}_2\right)^*\varphi
	\]
	\[
	\text{and }\;\; \varphi P^*=\left(M_z \oplus M_{e^{it}}\right)^*\varphi,
	\]
	where $\left(M_{X_1^*+zX_2}, M_{X_2^*+zX_1},M_z\right)$ on $H^2(\mathcal{D}_{P^*})$ is a pure $\Bbb E$-iosmetry and $\big(\widetilde{A}_2,\widetilde{B}_2,M_{e^{it}}\big)$ on $\overline{\Delta_P L^2(\mathcal{D}_P)}$ is an $\Bbb E$-unitary.
\end{thm}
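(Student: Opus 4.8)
The plan is to read the entire statement off Theorem \ref{commutativemodel} together with the Sz.-Nagy--Foias model of $P$, so that essentially no fresh computation is needed beyond matching notation and invoking coinvariance of the model subspace. First I would take $\varphi$ to be the Sz.-Nagy--Foias isometric embedding $\varphi_2:\mathcal{H}\to \textbf{K}=H^2(\mathcal{D}_{P^*})\oplus\overline{\Delta_P L^2(\mathcal{D}_P)}$ of subsection (ii), so that $\varphi_2\mathcal{H}=\mathcal{H}_P=\textbf{K}\ominus\mathcal{Q}_P$ and $M_z\oplus M_{e^{it}}$ is the minimal isometric dilation of $P$. The hypotheses $[F_1,F_2]=0$ and $[F_1,F_1^*]=[F_2,F_2^*]$ are exactly those of Theorem \ref{commutativemodel}, so that theorem applies and furnishes on this same space the minimal $\mathbb{E}$-isometric dilation $(\widetilde{A}_1\oplus\widetilde{A}_2,\widetilde{B}_1\oplus\widetilde{B}_2,\widetilde{P}_1\oplus\widetilde{P}_2)$. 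Setting $X_1=\mathcal{V}_1^*G_1\mathcal{V}_1$ and $X_2=\mathcal{V}_1^*G_2\mathcal{V}_1$, the formulas for $\widetilde{A}_1,\widetilde{B}_1,\widetilde{P}_1$ in Theorem \ref{commutativemodel} read precisely $\widetilde{A}_1=M_{X_1^*+zX_2}$, $\widetilde{B}_1=M_{X_2^*+zX_1}$ and $\widetilde{P}_1=M_z$ on $H^2(\mathcal{D}_{P^*})$, and that theorem already asserts that $(\widetilde{A}_1,\widetilde{B}_1,\widetilde{P}_1)$ is a pure $\mathbb{E}$-isometry while $(\widetilde{A}_2,\widetilde{B}_2,M_{e^{it}})$ on $\overline{\Delta_P L^2(\mathcal{D}_P)}$ is an $\mathbb{E}$-unitary. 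Thus the classification of the two summands demanded by the conclusion is immediate.

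It then remains to convert the compression formulas $A=P_{\mathcal{H}}(\widetilde{A}_1\oplus\widetilde{A}_2)|_{\mathcal{H}}$, $B=P_{\mathcal{H}}(\widetilde{B}_1\oplus\widetilde{B}_2)|_{\mathcal{H}}$, $P=P_{\mathcal{H}}(\widetilde{P}_1\oplus\widetilde{P}_2)|_{\mathcal{H}}$ of Theorem \ref{commutativemodel} into the adjoint intertwining relations of the conclusion. The relation $\varphi P^*=(M_z\oplus M_{e^{it}})^*\varphi$ is nothing but Equation \eqref{eqn9}. For the other two I would use the elementary fact that, writing $\widetilde{A}:=\widetilde{A}_1\oplus\widetilde{A}_2$ and identifying $\mathcal{H}$ with $\mathcal{H}_P$ via $\varphi_2$, the identity $A=P_{\mathcal{H}_P}\widetilde{A}|_{\mathcal{H}_P}$ forces $A^*=P_{\mathcal{H}_P}\widetilde{A}^*|_{\mathcal{H}_P}$, and that if $\mathcal{H}_P$ is \emph{coinvariant} under $\widetilde{A}$, i.e. $\widetilde{A}^*\mathcal{H}_P\subseteq\mathcal{H}_P$, then the right-hand side upgrades to $\widetilde{A}^*|_{\mathcal{H}_P}$, which is exactly $\varphi_2 A^*=\widetilde{A}^*\varphi_2$; the same argument with $\widetilde{B}$ handles $B$.

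The crux, and the step I expect to be the main obstacle, is therefore to verify that $\mathcal{H}_P$ is coinvariant not merely under $\widetilde{P}$ (which is Equation \eqref{eqn9}) but under $\widetilde{A}$ and $\widetilde{B}$ as well, since the isometry relations $\widetilde{A}=\widetilde{B}^*\widetilde{P}$, $\widetilde{B}=\widetilde{A}^*\widetilde{P}$ alone do not transfer coinvariance from $\widetilde{P}$. I would obtain it by recalling that, by the uniqueness clause of Theorem \ref{commutativemodel}, the present dilation is unitarily equivalent, through a unitary fixing $\mathcal{H}$, to the Schaffer-type $\mathbb{E}$-isometric dilation $(V_1,V_2,V_3)$ on $\mathcal{H}\oplus\ell^2(\mathcal{D}_P)$ of Theorem 6.1 of \cite{T:B}; in that realization $V_1,V_2,V_3$ are lower triangular with respect to $\mathcal{H}\oplus\ell^2(\mathcal{D}_P)$, so $\mathcal{H}$ is coinvariant under all three, and this coinvariance is carried over to $\mathcal{H}_P$ by the unitary equivalence. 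An equivalent, model-theoretic route, parallel to the proof of Theorem \ref{necessarytheorem}, is to show directly that $\mathcal{Q}_P=\textit{Ran}\begin{pmatrix}M_{\Theta_P}\\ \Delta_P\end{pmatrix}$ is invariant under $\widetilde{A}$ and $\widetilde{B}$; this reduces to intertwining identities between the characteristic function and the fundamental operators, supplied by Lemma \ref{Hari} (the tetrablock analogue of Lemma \ref{BisaiPal}) after conjugating $G_1,G_2$ by $\mathcal{V}_1$, on the $H^2(\mathcal{D}_{P^*})$-summand, together with the matching identities on the $\overline{\Delta_P L^2(\mathcal{D}_P)}$-summand. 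Either way one concludes $\widetilde{A}\,\mathcal{Q}_P\subseteq\mathcal{Q}_P$ and $\widetilde{B}\,\mathcal{Q}_P\subseteq\mathcal{Q}_P$, hence the desired coinvariance, and with it the three intertwining relations follow, completing the proof.
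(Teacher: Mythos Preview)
Your proposal is correct and essentially matches the paper's own argument. The paper's proof is the one-liner ``We apply the same argument as in the proof of Theorem \ref{commutativemodel} and the proof follows,'' and what you have written is precisely the unpacking of that sentence: invoke Theorem \ref{commutativemodel} on the Sz.-Nagy--Foias space $\textbf{K}$, identify $X_i=\mathcal V_1^*G_i\mathcal V_1$, and then upgrade the compression identities to the intertwining relations by noting that $\mathcal{H}$ is coinvariant under the dilation operators.

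Of your two routes for coinvariance, route (a)---pulling back coinvariance from the Schaffer-type dilation $(V_1,V_2,V_3)$ on $\mathcal H\oplus\ell^2(\mathcal D_P)$ via the unitary equivalence in Theorem \ref{commutativemodel}---is the intended one and is immediate, since the Schaffer construction in Theorem 6.1 of \cite{T:B} is lower triangular with respect to $\mathcal H\oplus\ell^2(\mathcal D_P)$. Route (b), proving invariance of $\mathcal Q_P$ directly via Lemma \ref{Hari} and a $\Delta_P$-identity, is logically possible but considerably heavier (it would amount to redoing the tetrablock analogue of the computations in the Appendix), and is not what the paper has in mind here; those identities are established \emph{after} this theorem, in Theorem \ref{necessarytheorem1}. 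So keep route (a) and drop route (b).
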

\begin{proof}
	We apply the same argument as in the proof of Theorem \ref{commutativemodel} and the proof follows.
\end{proof}

Here is an analogue of Lemma \ref{lem1} for $\Bbb E$-contractions.
\begin{lem}\label{lem01}
	If $(A_1,A_{2},M_z)$ on $H^2(\mathcal{E})$ is an $\Bbb E$-isometry, then there exist $Y_i\in\mathcal{B}(\mathcal{E})$ such that $A_i=M_{Y_i+zY_{n-i}^*}$.
\end{lem}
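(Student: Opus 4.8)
The plan is to mirror the proof of the $\Gamma_n$-analogue, Lemma \ref{lem1}, adapting it to the tetrablock setting via the structure theorem for $\mathbb{E}$-isometries, Theorem \ref{thm:ti}. First I would invoke Theorem \ref{thm:ti}: since $(A_1,A_2,M_z)$ is an $\mathbb{E}$-isometry and $M_z$ is the shift on $H^2(\mathcal{E})$ (hence an isometry), the theorem delivers that $A_2$ is a contraction and, crucially, the single defining relation $A_1=A_2^*M_z$. This relation, together with the commutativity of the triple, will be the entire engine of the argument, and here the tetrablock case is actually simpler than the $\Gamma_3$ case, where one has the two relations $S_1=S_2^*P$ and $S_2=S_1^*P$.

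Next I would exploit commutativity with the shift. Because $(A_1,A_2,M_z)$ is a commuting triple, both $A_1$ and $A_2$ lie in the commutant of $M_z$ on $H^2(\mathcal{E})$, which by a standard fact consists exactly of the analytic Toeplitz operators. Hence $A_1=M_{\Phi_1}$ and $A_2=M_{\Phi_2}$ for bounded $\mathcal{B}(\mathcal{E})$-valued analytic symbols, which I write as $\Phi_i(z)=\sum_{k\geq 0}\Phi_{i,k}z^k$ with $\Phi_{i,k}\in\mathcal{B}(\mathcal{E})$.

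The core step is to substitute these expansions into the operator identity $M_{\Phi_1}=M_{\Phi_2}^*M_z$ and compare Fourier coefficients. Evaluating both sides on $z^m e$ with $e\in\mathcal{E}$, and using the adjoint Toeplitz action $M_{\Phi_2}^*(z^n e)=\sum_{j=0}^{n}z^{n-j}\Phi_{2,j}^*e$, the left-hand side contributes only powers of $z$ that are $\geq m$, whereas the right-hand side contributes only powers $\leq m+1$. Letting $m$ range over all nonnegative integers forces $\Phi_{1,k}=0$ and $\Phi_{2,k}=0$ for every $k\geq 2$, so both symbols are affine in $z$; matching the two surviving powers $m$ and $m+1$ then yields $\Phi_{1,0}=\Phi_{2,1}^*$ and $\Phi_{1,1}=\Phi_{2,0}^*$. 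Finally I would set $Y_1=\Phi_{1,0}$ and $Y_2=\Phi_{2,0}$, so that the relations just obtained read $\Phi_1(z)=Y_1+zY_2^*$ and $\Phi_2(z)=Y_2+zY_1^*$; this is precisely $A_1=M_{Y_1+zY_2^*}$ and $A_2=M_{Y_2+zY_1^*}$, i.e. the asserted form $A_i=M_{Y_i+zY_{n-i}^*}$ with $n=3$.

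The only delicate point is the bookkeeping in the coefficient comparison: one must keep careful track of the adjoint Toeplitz action and, above all, notice that it is the requirement that the identity hold for \emph{all} $m$ (not merely one value) that simultaneously annihilates the higher coefficients of both symbols. Everything else is routine algebra, and it is worth recording that the single relation $A_1=A_2^*M_z$ already pins down both $\Phi_1$ and $\Phi_2$, so no symmetric relation is needed.
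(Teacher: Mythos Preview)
Your proposal is correct and is precisely the argument the paper has in mind: the paper simply states that the proof is similar to that of Lemma~\ref{lem1} and omits the details, and you have spelled out exactly that adaptation via Theorem~\ref{thm:ti}, commutant-of-the-shift, and coefficient comparison. Your observation that the single relation $A_1=A_2^*M_z$ already forces both symbols to be affine with the required cross-relations is accurate and makes the tetrablock computation slightly cleaner than the $\Gamma_n$ case.
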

\begin{proof}
	The proof is similar to the proof of Lemma \ref{lem1} and we skip it.
\end{proof}
We present an analogue of Lemma \ref{lemmain} for $\Bbb E$-contractions.
\begin{lem}\label{lemmain01}
	Let $P$ be a c.n.u contraction on a Hilbert space $\mathcal{H}$. Let $(M_{X_1^*+zX_{n-1}},M_{X_{2}^*+zX_1},\\M_z)$ on $H^2(\mathcal{D}_{P^*})$ be an $\Bbb E$-isometry and $(R_1, R_{2},M_{e^{it}})$ be an $\Bbb E$-unitary on $\overline{\Delta_P L^2(\mathcal{D}_P)}$. If for each $i=1,\,2,$
	\begin{equation}\label{eqn01}
	\begin{pmatrix}
	M_{X_i^*+zX_{3-i}} & 0\\
	0 & R_i
	\end{pmatrix}\begin{pmatrix}
	M_{\Theta_P} \\
	\Delta_P
	\end{pmatrix}H^2(\mathcal{D}_P)\subseteq \begin{pmatrix}
	M_{\Theta_P} \\
	\Delta_P
	\end{pmatrix}H^2(\mathcal{D}_P),
	\end{equation}
	then there exists an $\Bbb E$-isometry $(M_{Y_1+zY_{2}^*}, M_{Y_{2}+zY_1^*},M_z)$ on $H^2(\mathcal{D}_P)$ such that for each $i=1, 2,$	  
	\[
	\begin{pmatrix}
	M_{X_i^*+zX_{3-i}} & 0\\
	0 & R_i
	\end{pmatrix}\begin{pmatrix}
	M_{\Theta_P} \\
	\Delta_P
	\end{pmatrix}=\begin{pmatrix}
	M_{\Theta_P} \\
	\Delta_P
	\end{pmatrix}M_{{Y_i+zY_{3-i}^*}}.
	\]
\end{lem}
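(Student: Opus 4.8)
The plan is to transcribe the proof of Lemma \ref{lemmain} into the tetrablock setting, replacing each $\Gamma_n$-theoretic ingredient by its $\mathbb E$-counterpart. Write $\iota = \begin{pmatrix} M_{\Theta_P} \\ \Delta_P \end{pmatrix}$, an isometry of $H^2(\mathcal{D}_P)$ into $\textbf{K}$ whose range is exactly $\mathcal{Q}_P$, and abbreviate the block operators by $M_i = \begin{pmatrix} M_{X_i^*+zX_{3-i}} & 0 \\ 0 & R_i \end{pmatrix}$ for $i=1,2$ and $W = \begin{pmatrix} M_z & 0 \\ 0 & M_{e^{it}} \end{pmatrix}$. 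Hypothesis \eqref{eqn01} says precisely that $\textup{Ran}\,\iota$ is invariant under each $M_i$, so I can define $T_i$ on $H^2(\mathcal{D}_P)$ by the intertwining relation $M_i \iota = \iota T_i$; since $\iota$ is an isometry this yields the closed form $T_i = \iota^* M_i \iota$.

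The heart of the argument is the claim that $(T_1, T_2, M_z)$ is an $\mathbb E$-isometry on $H^2(\mathcal{D}_P)$. To see that it is a commuting triple I would write $T_iT_j = \iota^* M_i (\iota\iota^*) M_j \iota$ and use that $\iota\iota^*$ is the projection onto $\textup{Ran}\,\iota$, inside which the invariance hypothesis keeps $M_j\iota$; this collapses $T_iT_j$ to $\iota^* M_i M_j \iota$, and the commutativity of the two constituent triples $(M_{X_1^*+zX_2}, M_{X_2^*+zX_1}, M_z)$ and $(R_1, R_2, M_{e^{it}})$ forces $M_iM_j = M_jM_i$, whence $T_iT_j = T_jT_i$. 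The relation $M_zT_i = T_iM_z$ follows by the same device, applying $M_z$ and $M_{e^{it}}$ to the two scalar intertwiners $M_{X_i^*+zX_{3-i}}M_{\Theta_P} = M_{\Theta_P}T_i$ and $R_i\Delta_P = \Delta_P T_i$ extracted from $M_i\iota = \iota T_i$, and then using that $\iota$ is injective.

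For the norm estimate I would observe that $\underline{\textbf M} = (M_1, M_2, W)$ is an $\mathbb E$-contraction, being the orthogonal direct sum of the $\mathbb E$-isometry $(M_{X_1^*+zX_2}, M_{X_2^*+zX_1}, M_z)$ and the $\mathbb E$-unitary $(R_1, R_2, M_{e^{it}})$. Then for every holomorphic polynomial $f$ in three variables the identity $f(T_1,T_2,M_z) = \iota^* f(\underline{\textbf M})\iota$ gives $\|f(T_1,T_2,M_z)\| \le \|f\|_{\infty, \overline{\mathbb E}}$, so by Lemma \ref{TBLem} the triple $(T_1,T_2,M_z)$ is an $\mathbb E$-contraction; as its last entry $M_z$ is an isometry, Theorem \ref{thm:ti} promotes it to an $\mathbb E$-isometry. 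This proves the claim, and Lemma \ref{lem01} then furnishes operators $Y_i \in \mathcal{B}(\mathcal{D}_P)$ with $T_i = M_{Y_i + zY_{3-i}^*}$, which is the asserted conclusion.

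I expect the one delicate point to be the collapse $(\iota\iota^*)M_j\iota = M_j\iota$ in the commutativity step: this is exactly where invariance of $\textup{Ran}\,\iota$ under the block operators is indispensable, and it is the tetrablock analogue of the corresponding step in Lemma \ref{lemmain}. All remaining manipulations are routine and parallel the $n$-variable proof verbatim, with Lemma \ref{TBLem}, Theorem \ref{thm:ti}, and Lemma \ref{lem01} standing in for Lemma \ref{SPLem}, Theorem \ref{gammaiso}, and Lemma \ref{lem1}.
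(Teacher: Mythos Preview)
Your proposal is correct and follows exactly the approach the paper intends: the paper's own proof of this lemma is a one-line reference (``Similar to that of the proof of Theorem 4.10 of \cite{S:B}''), and the natural reading is precisely the transcription of the proof of Lemma \ref{lemmain} into the tetrablock setting that you carry out, with Lemma \ref{TBLem}, Theorem \ref{thm:ti}, and Lemma \ref{lem01} replacing Lemma \ref{SPLem}, Theorem \ref{gammaiso}, and Lemma \ref{lem1}. Every step you outline---the definition of $T_i$ via invariance and the isometry $\iota$, the collapse $(\iota\iota^*)M_j\iota = M_j\iota$ for commutativity, the $M_z$-commutation via the two scalar intertwiners, the polynomial norm bound, and the final structure lemma---matches the $\Gamma_n$ argument verbatim.
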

\begin{proof}
	Similar to that of the proof of Theorem 4.10 of \cite{S:B}.
\end{proof}

Combining Theorem \ref{bisaipalEdilation} and Lemma \ref{lemmain01} we have the following result.
\begin{thm}\label{bisai1}
	Let $(A,B,P)$ be a c.n.u $\Bbb E$-contraction on a Hilbert space $\mathcal{H}$ with $(F_1,F_2)$ and $(G_1,G_2)$ being the $\ft$-pairs of $(A,B,P)$ and $(A^*,B^*,P^*)$ respectively. Let $[F_1,F_2]=0,\,[F_1,F_1^*]=[F_2,F_2^*]$. Then there exists $Y_1,\,Y_2\in \mathcal{B}(\mathcal{D}_P)$ such that $(M_{Y_1+zY_{2}^*},M_{Y_{2}+zY_1^*},M_z)$ on $H^2(\mathcal{D}_P)$ is an $\Bbb E$-isometry and for $i=1,\,2$,
	\[
	\begin{pmatrix}
	M_{V_1^*G_i^*V_1+zV_1^*G_{3-i}V_1} & 0\\
	0 & \widetilde{V}_2^*R_i\widetilde{V}_2
	\end{pmatrix}\begin{pmatrix}
	M_{\Theta_P}\\
	\Delta_P
	\end{pmatrix}=\begin{pmatrix}
	M_{\Theta_P}\\
	\Delta_P
	\end{pmatrix}M_{Y_i+zY_{3-i}^*}.
	\]
\end{thm}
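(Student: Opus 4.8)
The plan is to transcribe the proof of Theorem~\ref{bisai} into the tetrablock setting, with Theorem~\ref{bisaipalEdilation} playing the role of Theorem~\ref{BisaiPalDilation} and Lemma~\ref{lemmain01} that of Lemma~\ref{lemmain}. First I would invoke Theorem~\ref{bisaipalEdilation}: since $(A,B,P)$ is a c.n.u $\E$-contraction with $[F_1,F_2]=0$ and $[F_1,F_1^*]=[F_2,F_2^*]$, there is an isometry $\varphi:\mathcal{H}\to H^2(\mathcal{D}_{P^*})\oplus\overline{\Delta_P L^2(\mathcal{D}_P)}$ intertwining $(A^*,B^*,P^*)$ with the adjoints of $(M_{X_1^*+zX_2}\oplus\widetilde{A}_2,\,M_{X_2^*+zX_1}\oplus\widetilde{B}_2,\,M_z\oplus M_{e^{it}})$, where $(M_{X_1^*+zX_2},M_{X_2^*+zX_1},M_z)$ is a pure $\E$-isometry on $H^2(\mathcal{D}_{P^*})$ and $(\widetilde{A}_2,\widetilde{B}_2,M_{e^{it}})$ is an $\E$-unitary on $\overline{\Delta_P L^2(\mathcal{D}_P)}$.

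Since $P$ is c.n.u, both $\varphi$ and $\varphi_2$ realise minimal isometric dilations of $P$, so there is a unitary $\Phi$ with $\Phi\varphi_2=\varphi$ commuting with $(M_z\oplus M_{e^{it}})^*$; Lemma~\ref{BisaiPallem} forces $\Phi=\begin{pmatrix} I\otimes\widetilde{V}_1 & 0\\ 0 & \widetilde{V}_2\end{pmatrix}$ for unitaries $\widetilde{V}_1\in\mathcal{B}(\mathcal{D}_{P^*})$ and $\widetilde{V}_2\in\mathcal{B}(\overline{\Delta_P L^2(\mathcal{D}_P)})$, the latter commuting with $M_{e^{it}}$. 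Conjugating the representation of the previous paragraph by $\Phi$ rewrites it in the Sz.-Nagy--Foias model attached to $\varphi_2$: the pure block becomes $M_{\widetilde{G}_i^*+z\widetilde{G}_{3-i}}$ with $\widetilde{G}_i=\widetilde{V}_1^*X_i\widetilde{V}_1$, and the unitary block becomes $\widetilde{V}_2^*R_i\widetilde{V}_2$ with $R_1=\widetilde{A}_2$, $R_2=\widetilde{B}_2$. Because unitary equivalence preserves both the pure $\E$-isometry and the $\E$-unitary, the two hypotheses of Lemma~\ref{lemmain01} will be in place once invariance is checked.

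The invariance \eqref{eqn01} is read off from the model: $\mathrm{Ran}(\varphi_2)=\mathcal{H}_P=\mathcal{Q}_P^{\perp}$ is invariant under the three adjoint operators, so $\mathcal{Q}_P=\mathrm{Ran}\begin{pmatrix} M_{\Theta_P}\\ \Delta_P\end{pmatrix}$ is invariant under $\begin{pmatrix} M_{\widetilde{G}_i^*+z\widetilde{G}_{3-i}} & 0\\ 0 & \widetilde{V}_2^*R_i\widetilde{V}_2\end{pmatrix}$ for $i=1,2$, which is exactly \eqref{eqn01}. Lemma~\ref{lemmain01} then produces the $\E$-isometry $(M_{Y_1+zY_2^*},M_{Y_2+zY_1^*},M_z)$ on $H^2(\mathcal{D}_P)$ together with the intertwining identity, at first written with $M_{\widetilde{G}_i^*+z\widetilde{G}_{3-i}}$ in the upper-left corner.

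The last and most delicate step is to identify $\widetilde{G}_i$ with $V_1^*G_iV_1$, where $V_1$ is the unitary of \eqref{eqnU} and $(G_1,G_2)$ is the $\ft$-pair of $(A^*,B^*,P^*)$; this is where I expect the work to concentrate. Following the tail of the proof of Theorem~\ref{bisai}, I would compute $A^*-BP^*$ and $B^*-AP^*$ directly from the $\varphi_2$-model and \eqref{eqn9}. In each, the upper-left corner collapses to $P_{\mathbb C}\otimes\widetilde{G}_i$ (the projection onto constants), while the $\E$-unitary corner vanishes: the commuting $\E$-unitary relation $\widetilde{A}_2=\widetilde{B}_2^*M_{e^{it}}$ from Theorem~\ref{thm:tu} yields $\widetilde{A}_2^*-\widetilde{B}_2M_{e^{it}}^*=0$ and $\widetilde{B}_2^*-\widetilde{A}_2M_{e^{it}}^*=0$. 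Transporting the surviving term through the unitary $\mathfrak{U}$ of \eqref{eqn10}--\eqref{eqnU} to the $\varphi_1$-model and evaluating on constants gives $A^*-BP^*=D_{P^*}(V_1\widetilde{G}_1V_1^*)D_{P^*}$ and $B^*-AP^*=D_{P^*}(V_1\widetilde{G}_2V_1^*)D_{P^*}$. The uniqueness of the $\ft$-pair (Lemma~\ref{Fopair1}) then forces $V_1\widetilde{G}_iV_1^*=G_i$, i.e.\ $\widetilde{G}_i=V_1^*G_iV_1$, and substituting this into the intertwining identity produces the stated form with $M_{V_1^*G_i^*V_1+zV_1^*G_{3-i}V_1}$ in the upper-left block. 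The only genuine obstacle relative to the $\Gamma_n$-case is handling the two coupled fundamental-operator equations for $A$ and $B$ simultaneously and confirming both $\E$-unitary cancellations; every remaining step is structurally identical to Theorem~\ref{bisai}.
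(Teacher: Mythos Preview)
Your proposal is correct and follows exactly the route the paper intends: the paper's own proof is the single sentence ``We may imitate the proof of Theorem~\ref{bisai},'' and your write-up carries out that imitation faithfully, with Theorem~\ref{bisaipalEdilation} replacing Theorem~\ref{BisaiPalDilation}, Lemma~\ref{lemmain01} replacing Lemma~\ref{lemmain}, and the $\E$-unitary relation $\widetilde{A}_2=\widetilde{B}_2^*M_{e^{it}}$ from Theorem~\ref{thm:tu} supplying the two cancellations in the unitary block.
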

\begin{proof}
	We may imitate the proof of Theorem \ref{bisai}.
\end{proof}

\subsection{Admissible $\ft$-pairs}

We have an analogue of Lemma \ref{lem} for c.n.u $\Bbb E$-contractions.
\begin{lem}\label{lemEcontraction}
	Let $(A,B,P)$ be a c.n.u $\Bbb E$-contraction as in Theorem \ref{bisai1} with
	\begingroup
	\allowdisplaybreaks
	\begin{align*}
		&\varphi_2A^*=\begin{pmatrix}
		M_{V_1^*G_1^*V_1+zV_1^*G_{2}V_1} & 0\\
		0 & \dfrac{1}{2}(I+M_{e^{it}})
		\end{pmatrix}^*\varphi_2,\\
		& \varphi_2B^*=\begin{pmatrix}
		M_{V_1^*G_2^*V_1+zV_1^*G_{1}V_1} & 0\\
		0 & \dfrac{1}{2}(I+M_{e^{it}})
		\end{pmatrix}^*\varphi_2\\
		\text{and }\;\;& \varphi_2P^*=\begin{pmatrix}
		M_z & 0\\
		0 & M_{e^{it}}
		\end{pmatrix}^*\varphi_2.
	\end{align*}
	\endgroup
	Then \[
	\mathcal{A}_* + \mathcal{A}_*P^*=2\mathcal{A}_*A^*=2\mathcal{A}_*B^*
	\]
	and
	\[
	P\mathcal{A}_* + \mathcal{A}_* = 2P\mathcal{A}_*A^*=2P\mathcal{A}_*B^*.
	\]
	\begin{proof}
		Follows immediately by proceeding as in the proof of Lemma \ref{lem}.
	\end{proof}
\end{lem}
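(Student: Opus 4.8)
The plan is to follow the proof of Lemma~\ref{lem} line for line, transferring everything to the $\varphi_1$-picture of Section~2.1, where $\mathcal{A}_*$ has a transparent matrix form. The guiding observation is that both pairs of identities involve only $\mathcal{A}_*$, which annihilates the purely non-unitary summand of any minimal isometric dilation; consequently only the $\mathbb E$-unitary blocks $\frac{1}{2}(I+M_{e^{it}})$ of the hypotheses will survive, and the (irrelevant) characteristic-function blocks may be ignored.

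First I would pass from the given $\varphi_2$-representations to $\varphi_1$-representations. Using $\mathfrak{U}\varphi_2=\varphi_1$, the block-diagonal form \eqref{eqnU} of $\mathfrak{U}$, and the relation $U^*=V_2M_{e^{it}}V_2^*$ read off from \eqref{eqn10} and \eqref{eqnU}, conjugation by $\mathfrak{U}$ cancels the $V_1$'s in the first components and turns $M_{e^{it}}$ into $U^*$, yielding
\[
\varphi_1A^*=\begin{pmatrix}M_{G_1^*+zG_2} & 0\\ 0 & \tfrac{1}{2}(I+U^*)\end{pmatrix}^*\varphi_1,\qquad
\varphi_1B^*=\begin{pmatrix}M_{G_2^*+zG_1} & 0\\ 0 & \tfrac{1}{2}(I+U^*)\end{pmatrix}^*\varphi_1,
\]
together with $\varphi_1P^*=\begin{pmatrix}M_z & 0\\ 0 & U^*\end{pmatrix}^*\varphi_1$. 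The top-left symbols $M_{G_1^*+zG_2}$ and $M_{G_2^*+zG_1}$ are recorded only for completeness, since they will play no role below.

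Next I would identify $\mathcal{A}_*$. Because $\text{Ran}(\varphi_1)$ is invariant under each of the adjoints appearing above and $\varphi_1$ is an isometry, iteration gives $P^{*m}=\varphi_1^*\begin{pmatrix}M_z^{*m} & 0\\ 0 & U^{m}\end{pmatrix}\varphi_1$, whence $P^mP^{*m}=\varphi_1^*\begin{pmatrix}M_z^mM_z^{*m} & 0\\ 0 & I\end{pmatrix}\varphi_1$ (using $U^{*m}U^m=I$). Letting $m\to\infty$ and using $M_z^mM_z^{*m}\to 0$ strongly gives $\mathcal{A}_*=\varphi_1^*\begin{pmatrix}0 & 0\\ 0 & I\end{pmatrix}\varphi_1$. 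The computation now collapses: since the top-left block of $\mathcal{A}_*$ is zero and $\text{Ran}(\varphi_1)$ is invariant under the relevant adjoints, the products $\mathcal{A}_*$, $\mathcal{A}_*P^*$ and $\mathcal{A}_*A^*$ reduce to their bottom-right blocks $I$, $U$ and $\frac{1}{2}(I+U)$, so that
\[
2\mathcal{A}_*A^*-\mathcal{A}_*-\mathcal{A}_*P^*=\varphi_1^*\begin{pmatrix}0 & 0\\ 0 & (I+U)-I-U\end{pmatrix}\varphi_1=0.
\]
Since $B^*$ carries the identical bottom-right block $\frac{1}{2}(I+U)$, the same reduction gives $\mathcal{A}_*+\mathcal{A}_*P^*=2\mathcal{A}_*B^*$; multiplying on the left by $P$ and invoking $P\mathcal{A}_*P^*=\mathcal{A}_*$ then produces the second line $P\mathcal{A}_*+\mathcal{A}_*=2P\mathcal{A}_*A^*=2P\mathcal{A}_*B^*$.

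The only step requiring care is the first one: correctly propagating the unitaries $V_1,V_2$ through $\mathfrak{U}$ and matching the transformed first components with the $\ft$-pair $(G_1,G_2)$ of $(A^*,B^*,P^*)$. This is exactly the bookkeeping already performed in Lemma~\ref{lem} (the ``$V_1\widetilde{G}_jV_1^*=B_j$'' manoeuvre), and it is purely formal. Crucially, because $\mathcal{A}_*$ kills the $H^2(\mathcal{D}_{P^*})$ summand, the precise shape of those first-component symbols never enters the final identities; only the uniform $\mathbb E$-unitary block $\frac{1}{2}(I+U^*)$ survives. Hence there is no genuine analytic obstacle, and the lemma does indeed follow by imitating the proof of Lemma~\ref{lem}.
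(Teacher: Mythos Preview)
Your proposal is correct and follows precisely the route the paper intends: transfer the $\varphi_2$-representation to the $\varphi_1$-picture via $\mathfrak{U}$ (cancelling the $V_1$'s and turning $M_{e^{it}}$ into $U^*$), compute $\mathcal{A}_*=\varphi_1^*\begin{pmatrix}0&0\\0&I\end{pmatrix}\varphi_1$, observe that only the bottom-right block $\tfrac12(I+U)$ survives in $\mathcal{A}_*A^*$ and $\mathcal{A}_*B^*$, and finish with $P\mathcal{A}_*P^*=\mathcal{A}_*$. This is exactly the imitation of Lemma~\ref{lem} that the paper's one-line proof invokes.
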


\begin{lem}
	The triple $\left(M_{(I+e^{it})/2},M_{(I+e^{it})/2},M_{e^{it}}\right)$ on $\overline{\Delta_P L^2(\mathcal{D}_P)}$ is an $\mathbb E$-unitary.
\end{lem}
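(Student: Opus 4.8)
The plan is to verify this directly from the characterization of $\mathbb{E}$-unitaries in Theorem \ref{thm:tu}, which states that a commuting triple $(N_1,N_2,N_3)$ is an $\mathbb{E}$-unitary precisely when $N_3$ is unitary, $N_2$ is a contraction, and $N_1=N_2^*N_3$. Here I would set $N_1=N_2=M_{(I+e^{it})/2}$ and $N_3=M_{e^{it}}$, regarded as multiplication operators on $L^2(\mathcal{D}_P)$ restricted to $\overline{\Delta_P L^2(\mathcal{D}_P)}$, and then check the three conditions in turn.

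First I would dispose of the one mild technical point, namely that the restrictions are well defined. Each symbol above is a scalar function of $t$ times the identity on $\mathcal{D}_P$, so each commutes with the operator-valued multiplication operator $\Delta_P$, whose symbol $\Delta_P(t)$ likewise depends only on $t$. Consequently $M_{e^{it}}$, $M_{(I+e^{it})/2}$, and their adjoints $M_{e^{-it}}$, $M_{(I+e^{-it})/2}$ all carry $\Delta_P L^2(\mathcal{D}_P)$ into itself, so $\overline{\Delta_P L^2(\mathcal{D}_P)}$ is a reducing subspace for each of them. Commutativity of the three restricted operators is then immediate, since multiplication operators with symbols depending only on $t$ always commute.

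Next I would check the three defining conditions. That $N_3=M_{e^{it}}$ is unitary on $\overline{\Delta_P L^2(\mathcal{D}_P)}$ follows since it is unitary on all of $L^2(\mathcal{D}_P)$ and the subspace reduces it. That $N_2=M_{(I+e^{it})/2}$ is a contraction follows from the pointwise bound $|(1+e^{it})/2|=|\cos(t/2)|\le 1$, so its symbol has supremum norm at most one. Finally, the relation $N_1=N_2^*N_3$ reduces to the scalar identity $\overline{\left((1+e^{it})/2\right)}\,e^{it}=(1+e^{-it})e^{it}/2=(e^{it}+1)/2$, that is, $M_{(I+e^{-it})/2}M_{e^{it}}=M_{(I+e^{it})/2}$, which is exactly $N_2^*N_3=N_1$.

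With all three hypotheses of Theorem \ref{thm:tu}(3) verified for the restricted triple, that theorem yields that the triple is an $\mathbb{E}$-unitary. There is no genuine obstacle in this argument; the only point requiring care is the bookkeeping that $\overline{\Delta_P L^2(\mathcal{D}_P)}$ reduces each of the multiplication operators involved, which I would settle once at the outset as above so that the subsequent symbol-level computations are unambiguous.
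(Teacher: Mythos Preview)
Your proof is correct and takes essentially the same approach as the paper: the paper simply says the result ``directly follows from Theorem~\ref{thm:tu}'' and omits the details, and you have carefully filled those in by verifying that the subspace is reducing, that $M_{e^{it}}$ is unitary, that $M_{(I+e^{it})/2}$ is a contraction, and that $N_2^*N_3=N_1$ via the scalar identity $\overline{(1+e^{it})/2}\,e^{it}=(1+e^{it})/2$.
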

\begin{proof}
	Directly follows from Theorem \ref{thm:tu} and we omit it.
\end{proof}

The following theorem gives a necessary condition on the $\ft$-pairs of a c.n.u $\Bbb E$-contraction $(A,B,P)$ and its adjoint $(A^*,B^*,P^*)$ which is the analogue of Theorem \ref{necessarytheorem} in the tetrablock setting.
\begin{thm}\label{necessarytheorem1}
	Let $(A,B,P)$ be a c.n.u $\Bbb E$-contraction on a Hilbert space $\mathcal{H}$ as in Lemma \ref{lemEcontraction}. Suppose $(F_1,F_2)$ is the $\ft$-pair of $(A,B,P)$ such that $[F_1,F_2]=0,\,[F_1,F_1^*]=[F_2,F_2^*]$. Then for $j=1,\,2$,
	\begin{equation}\label{eqnnecessaryE}
	\begin{pmatrix}
	M_{G_j^* + z G_{3-j}} & 0\\
	0 & M_{(I+e^{it})/2}
	\end{pmatrix}\begin{pmatrix}
	M_{\Theta_P}\\
	\Delta_P
	\end{pmatrix}=\begin{pmatrix}
	M_{\Theta_P}\\
	\Delta_P
	\end{pmatrix}M_{F_j+zF_{3-j}^*}.
	\end{equation}
	Moreover, if $V_1$ is as in \eqref{eqnU}, then there exists an $\Bbb E$-isometry $(M_{Y_1+zY_{2}^*},\\ M_{Y_{2}+zY_1^*},M_z)$ on $H^2(\mathcal{D}_P)$ such that 
	\begin{equation}\label{necessary1E}
	\begin{pmatrix}
	M_{G_j^* + z G_{3-j}} & 0\\
	0 & M_{(I+e^{it})/2}
	\end{pmatrix}\begin{pmatrix}
	M_{V_1}M_{\Theta_P}\\
	\Delta_P
	\end{pmatrix}=\begin{pmatrix}
	M_{V_1}M_{\Theta_P}\\
	\Delta_P
	\end{pmatrix}M_{Y_j+zY_{n-j}^*}.
	\end{equation}
\end{thm}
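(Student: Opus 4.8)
The plan is to mirror the proof of Theorem \ref{necessarytheorem}, replacing every ingredient by its tetrablock counterpart. Reading off the two block rows, Equation \eqref{eqnnecessaryE} splits into the characteristic-function relation
\[
M_{G_j^* + z G_{3-j}} M_{\Theta_P} = M_{\Theta_P} M_{F_j + z F_{3-j}^*}
\]
together with the boundary relation $M_{(I+e^{it})/2}\Delta_P = \Delta_P M_{F_j + z F_{3-j}^*}$. I would first dispose of the characteristic-function relation by applying Lemma \ref{Hari} not to $(A,B,P)$ but to its adjoint $\mathbb E$-contraction $(A^*,B^*,P^*)$, whose $\ft$-pair is $(G_1,G_2)$, whose adjoint has $\ft$-pair $(F_1,F_2)$, and for which $\Theta_{(P^*)^*}=\Theta_P$. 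This yields exactly $(G_1^*+zG_2)\Theta_P(z)=\Theta_P(z)(F_1+zF_2^*)$ and $(G_2^*+zG_1)\Theta_P(z)=\Theta_P(z)(F_2+zF_1^*)$, which are the cases $j=1$ and $j=2$ of the sought relation and pass immediately to the induced multiplication operators.

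The substantive work, exactly as in Theorem \ref{necessarytheorem}, is the boundary relation. Since $M_{(I+e^{it})/2}$ commutes with the nonnegative operator $\Delta_P$, it suffices to establish $\Delta_P^2 M_{(I+e^{it})/2}=\Delta_P^2 M_{F_j+zF_{3-j}^*}$. I would expand both sides as Fourier series $\sum_m e^{imt}\mathcal{I}_m$ and $\sum_m e^{imt}\mathcal{J}_m$ in $\mathcal B(\mathcal D_P)$, using the standard expansion of $\Delta_P(t)^2$ in terms of the operators $D_P\mathcal{A}_*P^{*m}D_P$ (the expansion recorded in the Appendix for Theorem \ref{necessarytheorem}, now with the symbol $(I+e^{it})/2$ in place of $\binom{n-1}{j}+\binom{n-1}{j-1}e^{it}$), and then verify $\mathcal{I}_m=\mathcal{J}_m$ case by case. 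The substitutions are the defining $\ft$-pair identities $A-B^*P=D_PF_1D_P$, $B-A^*P=D_PF_2D_P$ and their adjoint-side companions $A^*-BP^*=D_{P^*}G_1D_{P^*}$, $B^*-AP^*=D_{P^*}G_2D_{P^*}$ (together with the equations of Lemma \ref{Fopair1}), and the two key identities of Lemma \ref{lemEcontraction}, namely $\mathcal{A}_*+\mathcal{A}_*P^*=2\mathcal{A}_*A^*=2\mathcal{A}_*B^*$ and $P\mathcal{A}_*+\mathcal{A}_*=2P\mathcal{A}_*A^*=2P\mathcal{A}_*B^*$. The cases $m=0$, $m\ge 2$ and $m\le -1$ collapse after sandwiching with $D_P$ and invoking these identities; the delicate case is $m=1$, where the two coefficients agree only after multiplying by $D_P$ on one side, so one sets $T=\mathcal{J}_1-\mathcal{I}_1$, observes $D_PT=0$, and deduces $T=0$ from $\langle TD_Ph,D_Ph'\rangle=\langle D_PTh,h'\rangle=0$ and the density of $\operatorname{Ran}D_P$ in $\mathcal D_P$. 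This is the step I expect to be the main obstacle, precisely as it was for the $\Gamma_n$ theorem. Combining the two relations then gives \eqref{eqnnecessaryE}.

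For the \emph{moreover} clause I would argue as in the corresponding part of Theorem \ref{necessarytheorem}, using the $\mathbb E$-versions. From the representation in Lemma \ref{lemEcontraction}, $\operatorname{Ran}(\varphi_2)=\mathcal H_P=\mathcal Q_P^{\perp}$ is invariant under the adjoints of the two block operators carrying symbols $V_1^*G_j^*V_1+zV_1^*G_{3-j}V_1$ and $(I+e^{it})/2$, so $\mathcal Q_P$ itself is invariant under those blocks. Lemma \ref{lemmain01} then supplies an $\mathbb E$-isometry $(M_{Y_1+zY_2^*},M_{Y_2+zY_1^*},M_z)$ on $H^2(\mathcal D_P)$ intertwining these blocks with $M_{Y_j+zY_{3-j}^*}$ through $\begin{pmatrix}M_{\Theta_P}\\ \Delta_P\end{pmatrix}$. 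Finally, since the top symbol factors as $V_1^*(G_j^*+zG_{3-j})V_1$ and $\widetilde G_i=V_1^*G_iV_1$ by Theorem \ref{bisai1}, I would write $M_{V_1^*(G_j^*+zG_{3-j})V_1}=M_{V_1}^*M_{G_j^*+zG_{3-j}}M_{V_1}$ (with $M_{V_1}$ multiplication by the constant unitary $V_1$) and absorb $M_{V_1}$ into $M_{\Theta_P}$ to reach \eqref{necessary1E}. The one point requiring sustained care throughout is the index bookkeeping of $j$ versus $3-j$ and of which adjoint appears where, since the tetrablock pair $(F_1,F_2)$ pairs asymmetrically with $(G_1,G_2)$.
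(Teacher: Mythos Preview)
Your proposal is correct and follows exactly the approach the paper indicates: the paper's own proof reads in its entirety ``One can prove it easily by using Lemmas \ref{Hari} \& \ref{lemEcontraction} and mimicking the proof of Theorem \ref{necessarytheorem},'' and you have carried out precisely that program, including the correct application of Lemma \ref{Hari} to the adjoint triple $(A^*,B^*,P^*)$ to obtain the $\Theta_P$-relation, the Fourier-coefficient comparison via Lemma \ref{lemEcontraction} for the $\Delta_P$-relation, and the invocation of Lemma \ref{lemmain01} together with the $V_1$-conjugation for the \emph{moreover} clause.
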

\begin{proof}
	One can prove it easily by using Lemmas \ref{Hari} \& \ref{lemEcontraction} and mimicking the proof of Theorem \ref{necessarytheorem}.
\end{proof}
The following result is an analogue of Theorem \ref{sufficient}.
\begin{thm}\label{sufficientE}
	Let $P$ be a c.n.u contraction on a Hilbert space $\mathcal{H}$. Let $F_1,  F_{2}\in \mathcal{B}(\mathcal{D}_P)$ and $G_1,G_{2}\in \mathcal{B}(\mathcal{D}_{P^*})$ be such that they satisfy Equations $\eqref{eqnnecessaryE}$ and \eqref{necessary1E}. Suppose $[F_1,F_2]=0,\,[F_1,F_1^*]=[F_2,F_2^*]$. Then there exists an $\Bbb E$-contraction $(A,B,P)$ such that $(F_1,F_2)$ becomes the $\ft$-pair of $(A,B,P)$ and $(G_1,G_2)$ becomes the $\ft$-pair of $(A^*,B^*,P^*)$.
\end{thm}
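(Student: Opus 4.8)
The plan is to transport the proof of Theorem \ref{sufficient} from the symmetrized polydisc to the tetrablock. First I fix the Sz.-Nagy--Foias apparatus of the c.n.u.\ contraction $P$, so that $\varphi_2,\varphi_1,\mathfrak U,V_1,V_2$ and the identities \eqref{eqn9}, \eqref{eqn10}, \eqref{eqnU} are available. I then define the candidate operators by compression,
\[
A=\varphi_2^*W_A\varphi_2,\qquad B=\varphi_2^*W_B\varphi_2,\qquad P=\varphi_2^*W\varphi_2,
\]
where $W=M_z\oplus M_{e^{it}}$ and $W_A,W_B$ are the block-diagonal operators on $H^2(\D)\otimes\mathcal D_{P^*}\oplus\overline{\Delta_P L^2(\mathcal D_P)}$ whose pure parts are $M_{V_1^*G_1^*V_1+zV_1^*G_2V_1}$ and $M_{V_1^*G_2^*V_1+zV_1^*G_1V_1}$ and whose $\mathbb E$-unitary part is $M_{(I+e^{it})/2}$ in each, exactly the operators occurring in Lemma \ref{lemEcontraction}.

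The hypothesis \eqref{necessary1E} says that $\mathcal Q_P=(\mathrm{Ran}\,\varphi_2)^{\perp}$ is invariant under $W_A$ and $W_B$; with \eqref{eqn9} this gives that $\mathrm{Ran}\,\varphi_2$ is invariant under $W_A^*,W_B^*,W^*$, so that $\varphi_2^*(\cdot)\varphi_2$ carries products of these adjoints to the corresponding products of $A^*,B^*,P^*$. I would then verify that $(W_A,W_B,W)$ is a commuting $\mathbb E$-contraction, presented as a direct sum: the summand on $\overline{\Delta_P L^2(\mathcal D_P)}$ is the $\mathbb E$-unitary $(M_{(I+e^{it})/2},M_{(I+e^{it})/2},M_{e^{it}})$ of the lemma preceding Theorem \ref{necessarytheorem1}, while the summand on $H^2(\D)\otimes\mathcal D_{P^*}$, after conjugation by the unitary $V_1$, is $(M_{G_1^*+zG_2},M_{G_2^*+zG_1},M_z)$, which I claim is a pure $\mathbb E$-isometry. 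Granting the claim, $(A^*,B^*,P^*)$ is the commuting compression of $(W_A^*,W_B^*,W^*)$, and Lemma \ref{TBLem} applied to $f(A^*,B^*,P^*)=\varphi_2^*f(W_A^*,W_B^*,W^*)\varphi_2$ shows $(A^*,B^*,P^*)$, hence $(A,B,P)$, is an $\mathbb E$-contraction.

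To identify the fundamental pairs I compute, using $\mathfrak U\varphi_2=\varphi_1$ and the block form \eqref{eqnU}, the differences $A^*-BP^*$ and $B^*-AP^*$. In the $(1,1)$ block the $M_z^*$-terms cancel and the surviving $I-M_zM_z^*=P_{\mathbb C}$ leaves $P_{\mathbb C}\otimes V_1^*G_iV_1$ (using $(V_1^*G_1^*V_1)^*=V_1^*G_1V_1$ and $(V_1^*G_2V_1)^*=V_1^*G_2^*V_1$), whence $A^*-BP^*=D_{P^*}G_1D_{P^*}$ and $B^*-AP^*=D_{P^*}G_2D_{P^*}$; by the uniqueness in Lemma \ref{Fopair1}, $(G_1,G_2)$ is the $\ft$-pair of $(A^*,B^*,P^*)$. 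Finally, writing $(Y_1,Y_2)$ for the $\ft$-pair of $(A,B,P)$ and applying the tetrablock analogue of Corollary \ref{bappacor} (a consequence of Theorem \ref{necessarytheorem1}) yields the intertwining of the block operator with $M_{Y_j+zY_{3-j}^*}$; comparing with \eqref{eqnnecessaryE}, which gives the same intertwining with $M_{F_j+zF_{3-j}^*}$, and cancelling the isometry $\begin{pmatrix}M_{\Theta_P}\\\Delta_P\end{pmatrix}$ forces $F_j+zF_{3-j}^*=Y_j+zY_{3-j}^*$ for all $z\in\D$, so $Y_j=F_j$ and $(F_1,F_2)$ is the $\ft$-pair of $(A,B,P)$.

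The main obstacle is the claim that $(M_{G_1^*+zG_2},M_{G_2^*+zG_1},M_z)$ is a pure $\mathbb E$-isometry. For symbols of this shape the relation $M_{G_1^*+zG_2}=(M_{G_2^*+zG_1})^*M_z$ of Theorem \ref{thm:ti} holds identically, so by that theorem it suffices that the triple commute and that $M_{G_2^*+zG_1}$ be a contraction; commutativity is in turn equivalent to $[G_1,G_2]=0$ and $[G_1,G_1^*]=[G_2,G_2^*]$. These are precisely the conditions imposed on $F_1,F_2$, not on $G_1,G_2$, so the real work is to transport them across the intertwinings: equation \eqref{eqnnecessaryE} together with Lemma \ref{Hari} gives $M_{G_j^*+zG_{3-j}}M_{\Theta_P}=M_{\Theta_P}M_{F_j+zF_{3-j}^*}$, which already forces the $G$-commutators to vanish on $\mathrm{Ran}\,M_{\Theta_P}$ (and, through the $\Delta_P$-component, on the complementary part). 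Promoting this to the commutation and contractivity of $G_1,G_2$ on all of $\mathcal D_{P^*}$ — exploiting that $\Theta_P$ is the complete unitary invariant of the c.n.u.\ contraction $P$ — is the step I expect to require the most care.
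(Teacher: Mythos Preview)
Your overall approach mirrors the paper's one-line proof exactly: the paper simply says ``apply the same argument as in the proof of Theorem \ref{sufficient}'', and you carry out precisely that transport. Your outline up to the identification of the $\ft$-pairs is correct and matches the $\Gamma_n$ proof step for step.

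You have, however, correctly isolated a genuine obstacle that the paper does not address. In the $\Gamma_n$ version the standing hypothesis is that $\Sigma_2(z)$ is a $\Gamma_{n-1}$-contraction --- a condition on the \emph{adjoint} $\ft$-tuple $(B_1,\dots,B_{n-1})$ --- and it is exactly this that makes $(M_{B_1^*+zB_{n-1}},\dots,M_z)$ a $\Gamma_n$-isometry via Theorem~\ref{gammaiso}. The tetrablock analogue would be commutator and contractivity conditions on $G_1,G_2$, so that Theorem~\ref{thm:ti} applies to $(M_{G_1^*+zG_2},M_{G_2^*+zG_1},M_z)$. But Theorem~\ref{sufficientE} as stated imposes $[F_1,F_2]=0$ and $[F_1,F_1^*]=[F_2,F_2^*]$, the analogue of $\Sigma_1$, not $\Sigma_2$.

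Your proposed workaround --- pushing the $F$-conditions to $G$-conditions through the intertwining $M_{G_j^*+zG_{3-j}}M_{\Theta_P}=M_{\Theta_P}M_{F_j+zF_{3-j}^*}$ --- does not succeed in general. The commutator $[M_{G_1^*+zG_2},M_{G_2^*+zG_1}]$ is forced to vanish only on $\mathrm{Ran}\,M_{\Theta_P}$ (and likewise on $\mathrm{Ran}\,M_{V_1\Theta_P}$ via \eqref{necessary1E}), and $M_{\Theta_P}$ need not have dense range; indeed for pure $P$ the characteristic function is inner and $\mathrm{Ran}\,M_{\Theta_P}$ is a proper $M_z$-invariant subspace. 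The $\Delta_P$-component of \eqref{eqnnecessaryE} carries no information about the $G_i$ at all, since its left block is $M_{(I+e^{it})/2}$. The appeal to $\Theta_P$ being a complete unitary invariant does not bridge this gap: that statement concerns recovering $P$ up to unitary equivalence, not surjectivity of $M_{\Theta_P}$. The contractivity of $M_{G_2^*+zG_1}$, also required by Theorem~\ref{thm:ti}, is similarly out of reach from the $F$-hypotheses alone.

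In short, the argument the paper points to goes through cleanly once the hypothesis is placed on $(G_1,G_2)$ rather than on $(F_1,F_2)$, in parallel with the appearance of $\Sigma_2$ (not $\Sigma_1$) in Theorem~\ref{sufficient}; the statement of Theorem~\ref{sufficientE} appears to carry the wrong side of the dilation condition. With the hypothesis on $G_1,G_2$, your outline is a complete proof; with the hypothesis as written, the claim that $(M_{G_1^*+zG_2},M_{G_2^*+zG_1},M_z)$ is an $\mathbb E$-isometry is unjustified, and neither your sketch nor the paper's deferral supplies it.
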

\begin{proof}
	We apply the same argument as in the proof of Theorem \ref{sufficient} and the proof follows.
\end{proof}

Combining Theorems \ref{necessarytheorem1} $\&$ \ref{sufficientE} we get the following theorem which is the main result of this section.
\begin{thm}\label{tetrablock}
	Let $(A,B,P)$ be a c.n.u $\Bbb E$-contraction on a Hilbert space $\mathcal{H}$ as in Lemma \ref{lemEcontraction}. Suppose $(F_1,F_2)$ is the $\ft$-pair of $(A,B,P)$ such that $[F_1,F_2]=0,\,[F_1,F_1^*]=[F_2,F_2^*]$. Then for $j=1,\,2$,
	\begin{equation}\label{eqnnecessaryEcont}
	\begin{pmatrix}
	M_{G_j^* + z G_{3-j}} & 0\\
	0 & M_{(I+e^{it})/2}
	\end{pmatrix}\begin{pmatrix}
	M_{\Theta_P}\\
	\Delta_P
	\end{pmatrix}=\begin{pmatrix}
	M_{\Theta_P}\\
	\Delta_P
	\end{pmatrix}M_{F_j+zF_{3-j}^*}.
	\end{equation}
	Moreover, if $V_1$ is as in \eqref{eqnU}, then there exists an $\Bbb E$-isometry $(M_{Y_1+zY_{2}^*},\\ M_{Y_{2}+zY_1^*},M_z)$ on $H^2(\mathcal{D}_P)$ such that 
	\begin{equation}\label{necessary1Econt}
	\begin{pmatrix}
	M_{G_j^* + z G_{3-j}} & 0\\
	0 & M_{(I+e^{it})/2}
	\end{pmatrix}\begin{pmatrix}
	M_{V_1}M_{\Theta_P}\\
	\Delta_P
	\end{pmatrix}=\begin{pmatrix}
	M_{V_1}M_{\Theta_P}\\
	\Delta_P
	\end{pmatrix}M_{Y_j+zY_{n-j}^*}.
	\end{equation}
	Conversely, if $P$ is a c.n.u contraction on a Hilbert space $\mathcal{H}$ and $F_1,  F_{2}\in \mathcal{B}(\mathcal{D}_P)$ and $G_1,G_{2}\in \mathcal{B}(\mathcal{D}_{P^*})$ are such that they satisfy Equations $\eqref{eqnnecessaryEcont}$ and \eqref{necessary1Econt} with $[F_1,F_2]=0,\,[F_1,F_1^*]=[F_2,F_2^*]$, then there exists an $\Bbb E$-contraction $(A,B,P)$ such that $(F_1,F_2)$ becomes the $\ft$-pair of $(A,B,P)$ and $(G_1,G_2)$ becomes the $\ft$-pair of $(A^*,B^*,P^*)$.
\end{thm}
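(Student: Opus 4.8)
The plan is to obtain Theorem \ref{tetrablock} as the concatenation of its two constituent halves, exactly as Theorem \ref{mainthm} was assembled from Theorems \ref{necessarytheorem} and \ref{sufficient}. The forward implication is nothing but Theorem \ref{necessarytheorem1}: under the standing hypotheses that $(A,B,P)$ is a c.n.u $\Bbb E$-contraction as in Lemma \ref{lemEcontraction} with $\ft$-pair $(F_1,F_2)$ satisfying $[F_1,F_2]=0$ and $[F_1,F_1^*]=[F_2,F_2^*]$, that theorem already produces the intertwining relations \eqref{eqnnecessaryE} and \eqref{necessary1E}, which are verbatim \eqref{eqnnecessaryEcont} and \eqref{necessary1Econt}. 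So the first half requires no new argument beyond citing Theorem \ref{necessarytheorem1}; the substantive content there was in turn built on the conditional $\Bbb E$-isometric dilation of Theorem \ref{bisaipalEdilation}, the model lemma \ref{lemmain01}, and the adjoint intertwining of Lemma \ref{Hari}.

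For the converse I would invoke Theorem \ref{sufficientE} directly. Given a c.n.u contraction $P$ together with $F_1,F_2\in\mathcal B(\mathcal D_P)$ and $G_1,G_2\in\mathcal B(\mathcal D_{P^*})$ satisfying \eqref{eqnnecessaryEcont} and \eqref{necessary1Econt} (which are precisely the equations \eqref{eqnnecessaryE} and \eqref{necessary1E} hypothesized in Theorem \ref{sufficientE}) and the commutativity conditions $[F_1,F_2]=0$, $[F_1,F_1^*]=[F_2,F_2^*]$, that theorem manufactures the desired $\Bbb E$-contraction $(A,B,P)$ with $(F_1,F_2)$ as the $\ft$-pair of $(A,B,P)$ and $(G_1,G_2)$ as the $\ft$-pair of $(A^*,B^*,P^*)$. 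Concretely, one sets the candidate operators via $\varphi_2$ from block operators $W_j$ assembled out of $V_1^*G_j^*V_1+zV_1^*G_{3-j}V_1$ and the middle entry $M_{(I+e^{it})/2}$; the commutativity conditions guarantee, through Theorems \ref{thm:tu} and \ref{thm:ti}, that these ingredients form a pure $\Bbb E$-isometry direct-summed with an $\Bbb E$-unitary, hence an $\Bbb E$-contraction, and Lemma \ref{TBLem} transfers this through the isometry $\varphi_2$ to $(A,B,P)$.

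Since both halves are already in hand, I anticipate no genuine obstacle; the only care needed is bookkeeping. Specifically, one must check that the structural hypothesis ``as in Lemma \ref{lemEcontraction}'' (which fixes the middle diagonal entry to be $M_{(I+e^{it})/2}$ and pins the shape of the dilation) is exactly what the forward half consumes, and that in the converse the uniqueness of the $\ft$-pair (Lemma \ref{Fopair1}) together with the injectivity of the isometry $\binom{M_{\Theta_P}}{\Delta_P}$ forces the $\ft$-pair $(Y_1,Y_2)$ of the constructed $(A,B,P)$ to coincide with $(F_1,F_2)$, i.e. $Y_j=F_j$, in exact analogy with the extraction $Y_j=A_j$ in the proof of Theorem \ref{sufficient}. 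The role played in the $\Gamma_n$-case by the auxiliary conditions that $\Sigma_1(z),\Sigma_2(z)$ be $\Gamma_{n-1}$-contractions is here played by the two commutator conditions on $(F_1,F_2)$, and matching these up across the forward and converse statements is essentially the whole of the work.
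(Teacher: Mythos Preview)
Your proposal is correct and matches the paper's approach exactly: the paper proves Theorem \ref{tetrablock} simply by noting that it is the combination of Theorems \ref{necessarytheorem1} and \ref{sufficientE}, which is precisely what you do. Your additional elaboration on the internal mechanics of those two results (the role of Lemma \ref{Hari}, the construction of $S_j=\varphi_2^*W_j\varphi_2$, the injectivity of $\binom{M_{\Theta_P}}{\Delta_P}$ to extract $Y_j=F_j$) is accurate context but strictly more than the paper itself writes for this theorem.
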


\begin{center}
	\textbf{Acknowledgment}
\end{center}
The author acknowledges Harish-Chandra Research Institute, Prayagraj for warm hospitality. The author is grateful to Indian Statistical Institute, Kolkata and Prof. Debashish Goswami for a visiting scientist position under which most of the work was done. He wishes to thank the Stat-Math Unit, ISI Kolkata for all the facilities provided to him. The author has been supported by Prof. Debashish Goswami's J C Bose grant and the institute post-doctoral fellowship of HRI, Prayagraj.

\text{ }\\

\noindent \textbf{Appendix}\\

\noindent \textit{Proof of Equation \ref{expression1}:}
\begingroup
\allowdisplaybreaks
\begin{align*}
&\Delta_P(t)^2\left[{{n-1}\choose {j}}+{{n-1}\choose {j-1}}e^{it}\right]\\
=& \left[I-\Theta_P(e^{it})^*\Theta_P(e^{it})\right]\left[{{n-1}\choose {j}}+{{n-1}\choose {j-1}}e^{it}\right]\\
=&\left[{{n-1}\choose {j}}+{{n-1}\choose {j-1}}e^{it}\right]-\left[-P^*+\sum\limits_{m=0}^{\infty}e^{-i(m+1)t}D_PP^mD_{P^*}\right]\\
& \times \left[-P + \sum\limits_{m=0}^{\infty}e^{i(m+1)t}D_{P^*}P^{*m}D_P\right]\left[{{n-1}\choose j} + {{n-1}\choose {j-1}}e^{it}\right]\\
=&{{n-1}\choose j} + {{n-1}\choose {j-1}}e^{it} - \left[-P^*+\sum\limits_{m=-\infty}^{-1}e^{imt}D_PP^{-m-1}D_{P^*}\right]\\
& \times \Bigg[-{{n-1}\choose j}P + e^{it}\left({{n-1}\choose {j}}D_{P^*}D_P-{{n-1}\choose {j-1}}P\right)\\
&+\sum\limits_{m=2}^{\infty}e^{imt}D_{P^*}P^{*(m-2)}\textbf{E}\Bigg],\,\text{where }\textbf{E}=\left({{n-1}\choose {j}}P^* +{{n-1}\choose {j-1}}\right)D_P\\
=&\Bigg[{{n-1}\choose {j}}D_P^2-D_PD_{P^*}\left({{n-1}\choose {j}}D_{P^*}D_P-{{n-1}\choose {j-1}}P\right) -\sum\limits_{k=2}^{\infty}D_PP^{k-1}D_{P^*}^2P^{*(k-2)}\textbf{E}\Bigg]\\
+&\sum\limits_{m=1}^{\infty}e^{imt}\Bigg[D_{P}P^{*(m-1)}\textbf{E}-\sum\limits_{k=1}^{\infty}D_PP^{k-1}D_{P^*}^2P^{*(m+k-2)}\textbf{E}\Bigg] + \sum\limits_{-\infty}^{-1}e^{imt}\Bigg[{{n-1}\choose{j}}D_PP^{-m-1}D_{P^*}P\\
-&D_PP^{-m}D_{P^*}\left({{n-1}\choose {j}}D_{P^*}D_P-{{n-1}\choose{j-1}}P\right)
-\sum\limits_{k=-\infty}^{m-2}D_PP^{-k-1}D_{P^*}^2P^{*(m-k-2)}\textbf{E}\Bigg].
\end{align*}
\endgroup
The last equality follows from the fact that $P^*D_{P^*}=D_PP^*$. Now consider the coefficients of the above expression:\\
\textbf{Constant term:}
\begingroup
\allowdisplaybreaks
\begin{align*}
&{{n-1}\choose {j}}D_P^2-D_PD_{P^*}\left({{n-1}\choose {j}}D_{P^*}D_P-{{n-1}\choose {j-1}}P\right)-\sum\limits_{k=2}^{\infty}D_PP^{k-1}D_{P^*}^2P^{*(k-2)}\textbf{E}\\
=&{{n-1}\choose {j}}D_P^2-{{n-1}\choose {j}}D_P^2+{{n-1}\choose {j}}D_PPP^*D_{P}+{{n-1}\choose {j-1}}D_PD_{P^*}P\\
&-\sum\limits_{k=2}^{\infty}D_PP\left(P^{k-2}P^{*(k-2)}-P^{k-1}P^{-(k-1)}\right)\textbf{E}\\
=&{{n-1}\choose {j}}D_PPP^*D_{P}+{{n-1}\choose {j-1}}D_PD_{P^*}P-D_PP(I-\mathcal{A}_*)\textbf{E}\\
=& {{n-1}\choose {j}}D_PPP^*D_{P}+{{n-1}\choose {j-1}}D_PPD_{P}-{{n-1}\choose {j}}D_PPP^*D_P\\
&-{{n-1}\choose{j-1}}D_PPD_P+{{n-1}\choose{j}}D_PP\mathcal{A}_*P^*D_P+{{n-1}\choose{j-1}}D_PP\mathcal{A}_*D_P\\
=&{{n-1}\choose{j}}D_P\mathcal{A}_*D_P+{{n-1}\choose{j-1}}D_PP\mathcal{A}_*D_P \;\;\;\; (\text{since }P\mathcal{A}_*P^*=\mathcal{A}_*).
\end{align*}
\endgroup
The second last equality follows from the fact that $D_{P^*}P=PD_P$.\\

\noindent \textbf{Coefficient of $e^{imt}$, $m\geq 1$:}
\begingroup
\allowdisplaybreaks
\begin{align*}
&D_{P}P^{*(m-1)}\textbf{E}
-\sum\limits_{k=1}^{\infty}D_PP^{k-1}D_{P^*}^2P^{*(m+k-2)}\textbf{E}\\
=& {{n-1}\choose {j}}D_{P}P^{*m}D_P+{{n-1}\choose {j-1}}D_{P}P^{*(m-1)}D_P-\sum\limits_{k=1}^{\infty}D_P\left(P^{k-1}P^{*(k-1)}-P^{k}P^{*k}\right)P^{*(m-1)}\textbf{E}\\
=& {{n-1}\choose {j}}D_{P}P^{*m}D_P+{{n-1}\choose {j-1}}D_{P}P^{*(m-1)}D_P \\
&-D_P(I-\mathcal{A}_*)P^{*(m-1)}\left({{n-1}\choose{j-1}}+{{n-1}\choose{j}}P^*\right)D_P\\
=&{{n-1}\choose {j}}D_PP^{*m}D_P + {{n-1}\choose {j-1}}D_PP^{*(m-1)}D_P-{{n-1}\choose {j-1}}D_PP^{*(m-1)}D_P\\
&-{{n-1}\choose {j}}D_PP^{*m}D_P+{{n-1}\choose {j-1}}D_P\mathcal{A}_*P^{*(m-1)}D_P+{{n-1}\choose{j}}D_P\mathcal{A}_*P^{*m}D_P\\
=&{{n-1}\choose {j-1}}D_P\mathcal{A}_*P^{*(m-1)}D_P+{{n-1}\choose{j}}D_P\mathcal{A}_*P^{*m}D_P.
\end{align*}
\endgroup

\textbf{Coefficient of $e^{imt}$, $m\leq -1$:}
\begingroup
\allowdisplaybreaks
\begin{align*}
&{{n-1}\choose{j}}D_PP^{-m-1}D_{P^*}P
-D_PP^{-m}D_{P^*}\left({{n-1}\choose {j}}D_{P^*}D_P-{{n-1}\choose{j-1}}P\right)\\
&-\sum\limits_{k=-\infty}^{m-2}D_PP^{-k-1}D_{P^*}^2P^{*(m-k-2)}\textbf{E}\\
=&{{n-1}\choose{j}}D_PP^{-m}D_{P}-{{n-1}\choose{j}}D_PP^{-m}D_{P^*}^2D_P+{{n-1}\choose{j-1}}D_PP^{-m+1}D_P\\
&-\sum\limits_{k=2-m}^{\infty}D_PP^{-m+1}\left(P^{m+k-2}P^{*(m+k-2)}-P^{m+k-1}P^{*(m+k-1)}\right)\textbf{E}\\
=&{{n-1}\choose{j}}D_PP^{-m}D_{P}-{{n-1}\choose{j}}D_PP^{-m}D_{P}+{{n-1}\choose{j}}D_PP^{-m+1}P^*D_{P}\\
&+ {{n-1}\choose{j-1}}D_PP^{-m+1}D_{P}-D_PP^{-m+1}(I-\mathcal{A}_*)\textbf{E}\\
=&{{n-1}\choose{j}}D_PP^{-m+1}P^*D_{P}+{{n-1}\choose{j-1}}D_PP^{-m+1}D_{P}-{{n-1}\choose{j}}D_PP^{-m+1}P^*D_{P}\\
&-{{n-1}\choose{j-1}}D_PP^{-m+1}D_{P}+{{n-1}\choose{j}}D_PP^{-m+1}\mathcal{A}_*P^*D_{P}+{{n-1}\choose {j-1}}D_PP^{-m+1}\mathcal{A}_*D_P\\
=&{{n-1}\choose{j}}D_PP^{-m}\mathcal{A}_*D_{P}+{{n-1}\choose {j-1}}D_PP^{-m+1}\mathcal{A}_*D_P.
\end{align*}
\endgroup
The last equality follows from the fact that $P\mathcal{A}_*P^*=\mathcal{A}_*$. Therefore, Equation \eqref{expression1} holds.\\

\noindent \textit{Proof of Equation \eqref{expression2}:}
\begingroup
\allowdisplaybreaks
\begin{align*}
&\Delta_P(t)^2(A_j+e^{it}A_{n-j}^*)\\
=&(I-\Theta_P(e^{it})^*\Theta_P(e^{it}))(A_j+e^{it}A_{n-j}^*)\\
=&(A_j+e^{it}A_{n-j}^*)-\Theta_P(e^{it})^*(B_j^*+ e^{it}B_{n-j})\Theta_P(e^{it})\;\;(\text{using Lemma } \ref{BisaiPal})\\
=&(A_j+e^{it}A_{n-j}^*)-\left[-P^*+\sum\limits_{m=0}^{\infty}e^{-i(m+1)t}D_PP^mD_{P^*}\right][B_j^*+ e^{it}B_{n-j}]\\
& \qquad\qquad\qquad\times \left[-P+\sum\limits_{m=0}^{\infty}e^{i(m+1)}D_{P^*}P^{*m}D_P\right]\\
=&A_j+e^{it}A_{n-j}^* -\left[-P^*+\sum\limits_{m=0}^{\infty}e^{-i(m+1)t}D_PP^mD_{P^*}\right] \Bigg[-B_j^*P\\
&+e^{it}(B_j^*D_{P^*}D_P-B_{n-j}P)+\sum\limits_{m=2}^{\infty}e^{imt}(B_j^*D_{P^*}P^*+B_{n-j}D_{P^*})P^{*(m-2)}D_P\Bigg]\\
=&A_j+e^{it}A_{n-j}^* -\left[-P^*+\sum\limits_{m=-\infty}^{-1}e^{imt}D_PP^{-m-1}D_{P^*}\right] \Bigg[-B_j^*P+e^{it}(B_j^*D_{P^*}D_P-B_{n-j}P)+\\
&\sum\limits_{m=2}^{\infty}e^{imt}D_{P^*}S_{n-j}^*P^{*(m-2)}D_P\Bigg]\big(\text{using Theorem }\ref{BisaiPal1} \text{ for }(S_1^*, \dots, S_{n-1}^*,P^*)\big)\\
=& \Bigg[A_j-P^*B_j^*P - D_PD_{P^*}(B_j^*D_{P^*}D_P-B_{n-j}P)-\sum\limits_{k=-\infty}^{-2}D_PP^{-k-1}D_{P^*}^2P^{*(-k-2)}S_{n-j}^*D_P\Bigg]\\
&+e^{it}\Bigg[A_{n-j}^*-P^*B_{n-j}P+P^*B_j^*D_{P^*}D_P-\sum\limits_{k=1}^{\infty}D_PP^{k-1}D_{P^*}^2P^{*(k-1)}S_{n-j}^*D_P\Bigg]\\
&+\sum\limits_{m=2}^{\infty}e^{imt}\Bigg[P^*D_{P^*}S_{n-j}^*P^{*(m-2)}D_P-\sum\limits_{k=1}^{\infty}D_PP^{k-1}D_{P^*}^2P^{*(m+k-2)}S_{n-j}^*D_P\Bigg]\\
&+\sum\limits_{m=-\infty}^{-1}e^{imt}\Bigg[D_PP^{-m-1}D_{P^*}B_j^*P-D_PP^{-m}D_{P^*}(B_j^*D_{P^*}D_P-B_{n-j}P)\\
&-\sum\limits_{k=2-m}^{\infty}D_PP^{k-1}D_{P^*}^2P^{*(m+k-2)}S_{n-j}^*D_P\Bigg].
\end{align*}
\endgroup
Let us simplify the coefficients of the above expression:\\
\textbf{Constant term:}
\begingroup
\allowdisplaybreaks
\begin{align*}
&A_j-P^*B_j^*P - D_PD_{P^*}(B_j^*D_{P^*}D_P-B_{n-j}P)-\sum\limits_{k=-\infty}^{-2}D_PP^{-k-1}D_{P^*}^2P^{*(-k-2)}S_{n-j}^*D_P\\
=&A_j-P^*PA_j-D_P(S_j^*-S_{n-j}P^*)^*D_P+D_PD_{P^*}B_{n-j}P\\
&-\sum\limits_{k=2}^{\infty}D_{P}P\left(P^{k-2}P^{*(k-2)}-P^{k-1}P^{*(k-1)}\right)S_{n-j}^*D_P\;\;(\text{by Lemma }\ref{BPlem})\\
=&D_P^2A_j-D_PS_jD_P+D_PPS_{n-j}^*D_P+D_PD_{P^*}B_{n-j}P-D_PP(I-\mathcal{A}_*)S_{n-j}^*D_P\\
=&D_P^2A_j-D_PS_jD_P+D_PD_{P^*}B_{n-j}P+D_PP\mathcal{A}_*S_{n-j}^*D_P.
\end{align*}
\endgroup
\textbf{Coefficient of $e^{it}$:}
\begingroup
\allowdisplaybreaks
\begin{align*}
&A_{n-j}^*-P^*B_{n-j}P
+P^*B_j^*D_{P^*}D_P-\sum\limits_{k=1}^{\infty}D_PP^{k-1}D_{P^*}^2P^{*(k-1)}S_{n-j}^*D_P\\
=&A_{n-j}^*-A_{n-j}^*P^*P+P^*B_j^*D_{P^*}D_{P}-D_P(I-\mathcal{A}_*)S_{n-j}^*D_P\;(\text{by Lemma }\ref{BPlem})\\
=&A_{n-j}^*D_P^2+P^*B_j^*D_{P^*}D_{P}-D_PS_{n-j}^*D_P+D_P\mathcal{A}_*S_{n-j}^*D_P.
\end{align*}
\endgroup
\textbf{Coefficient of $e^{imt}$, $m\geq 2$:}
\begingroup
\allowdisplaybreaks
\begin{align*}
&P^*D_{P^*}S_{n-j}^*P^{*(m-2)}D_P-\sum\limits_{k=1}^{\infty}D_PP^{k-1}D_{P^*}^2P^{*(m+k-2)}S_{n-j}^*D_P\\
=&D_PP^*S_{n-j}^*P^{*(m-2)}D_P-\sum\limits_{k=1}^{\infty}D_P\left(P^{k-1}P^{*(k-1)}-P^kP^{*k}\right)P^{*(m-1)}S_{n-j}^*D_P\\
=&D_PS_{n-j}^*P^{*(m-1)}D_P-D_P(I-\mathcal{A}_*)P^{*(m-1)}S_{n-j}^*D_P\\
=&D_P\mathcal{A}_*P^{*(m-1)}S_{n-j}^*D_P.
\end{align*}
\endgroup
\textbf{Coefficient of $e^{imt}$, $m\leq -1$:}
\begingroup
\allowdisplaybreaks
\begin{align*}
&D_PP^{-m-1}D_{P^*}B_j^*P-D_PP^{-m}D_{P^*}(B_j^*D_{P^*}D_P-B_{n-j}P)\\
&-\sum\limits_{k=2-m}^{\infty}D_PP^{k-1}D_{P^*}^2P^{*(m+k-2)}S_{n-j}^*D_P\\
=&D_PP^{-m-1}D_{P^*}PA_j-D_PP^{-m}(S_j^*-S_{n-j}P^*)^*D_P+D_PP^{-m}D_{P^*}B_{n-j}P\\
&-\sum\limits_{k=2-m}^{\infty}D_PP^{1-m}\left(P^{m+k-2}P^{*(m+k-2)}-P^{m+k-1}P^{*(m+k-1)}\right)S_{n-j}^*D_P\\
=&D_PP^{-m}D_PA_j-D_PP^{-m}S_jD_P+D_PP^{-m+1}S_{n-j}^*D_P\\
&+D_PP^{-m}D_{P^*}B_{n-j}P-D_PP^{1-m}(I-\mathcal{A}_*)S_{n-j}^*D_P\\
=&D_PP^{-m}D_PA_j-D_PP^{-m}S_jD_P+D_PP^{-m+1}S_{n-j}^*D_P\\
&+D_PP^{-m}D_{P^*}B_{n-j}P-D_PP^{-m+1}S_{n-j}^*D_P+D_PP^{-m+1}\mathcal{A}_*S_{n-j}^*D_P\\
=&D_{P}P^{-m}(D_PA_j+D_{P^*}B_{n-j}P)-D_PP^{-m}S_jD_P+D_PP^{-m+1}\mathcal{A}_*S_{n-j}^*D_P\\
=& D_PP^{-m+1}\mathcal{A}_*S_{n-j}^*D_P, \quad(\text{using Lemma }\ref{BPlem1}).
\end{align*}
\endgroup
Thus Equation \eqref{expression2} holds.


\end{document}